\crefname{hypothesis}{Hypothesis}{Hypotheses}
\newcommand{\abs}[1]{\left\vert#1\right\vert}
\newcommand{\norm}[1]{\left\Vert#1\right\Vert}
\def\sinc{\mathrm{sinc}}
\newtheorem{mydef}{Definition}[section]
\newtheorem{mytheo}{Theorem}[section]
\newtheorem{myprop}{Proposition}[section]
\newtheorem{mylemma}{Lemma}[section]
\title{A third-order  trigonometric integrator with low regularity for the semilinear Klein-Gordon equation\thanks{Submitted to the editors DATE.
\funding{This work was supported by NSFC (12371403, 12271426) and Key Research and Development Projects of Shaanxi Province (2023-YBSF-399).}}}
\author{Bin Wang\thanks{School of Mathematics and Statistics, Xi'an Jiaotong University, 710049 Xi'an, China
  (wangbinmaths@xjtu.edu.cn).}
  \and Yaolin Jiang\thanks{School of Mathematics and Statistics, Xi'an Jiaotong University, 710049 Xi'an, China
  (yljiang@mail.xjtu.edu.cn).}
  }
\begin{document}

\maketitle

\begin{abstract}
In this paper, we propose and analyse a novel third-order low-regularity trigonometric integrator for the semilinear Klein-Gordon equation with non-smooth solution in the $d$-dimensional space, where $d=1,2,3$. The integrator is constructed based on the full use of Duhamel's formula and the  employment of a twisted function tailored for trigonometric integrals. Robust error analysis is conducted, demonstrating that the proposed scheme achieves third-order accuracy in the energy space under a weak regularity requirement in  $H^{1+\max(\mu,1)}(\mathbb{T}^d)\times H^{\max(\mu,1)}(\mathbb{T}^d)$ with $\mu> \frac{d}{2}$. A numerical experiment shows that the proposed third-order low-regularity integrator is much more accurate than some well-known exponential integrators of order three for approximating the Klein-Gordon equation with non-smooth solutions.

\end{abstract}

\begin{keywords}
Third order scheme, Low-regularity integrator, Error estimate,  Klein--Gordon equation
\end{keywords}

\begin{AMS}
35L70, 65M12, 65M15, 65M70
\end{AMS}

\section{Introduction}\label{sec:introduction}
This article concerns the numerical solution of the following
semilinear Klein--Gordon equation (SKGE):
\begin{equation}\label{klein-gordon}
\left\{
\begin{aligned}
&\partial_{tt}u(t,x)-\Delta u(t,x)+\rho u(t,x)=f\big(u(t,x)\big),\quad 0< t\leq T,\ x\in \mathbb{T}^d\subset \mathbb{R}^d,\\
&u(0,x)=u_0(x),\ \partial_{t}u(0,x)=v_0(x), \ x\in \mathbb{T}^d,
\end{aligned}
\right.
\end{equation}
under the periodic boundary condition,
where $u(t,x)$ represents the wave displacement at
time $t$ and position $x$, $d=1,2,3$ is the dimension of  $x$,  $\rho\geq 0$ is a given parameter  which is interpreted physically as the mass of the particle and   $f(u):\mathbb{R}\rightarrow \mathbb{R}$ is a given nonlinear function. Theoretically,  the SKGE is globally
well-posed for the initial data $(u_0(x),v_0(x)) \in H^{\nu}(\mathbb{T}^d)\times H^{\nu-1}(\mathbb{T}^d)$
with $\nu\geq 1/3$ in the one space dimension case  \cite{Bourgain} and with
$\nu\geq 1$ in the two or three dimensional space  \cite{Ginibre}.
There are many well-known examples of \eqref{klein-gordon}  coming from different choices of $f(u)$. For the very special case $f(u)=0$,
 the above Klein-Gordon equation is known as the relativistic version of the Schr\"{o}dinger
equation which correctly describes the spinless relativistic composite particles such as the pion and the Higgs
boson \cite{Davydov76}. In the case $f(u)=\sin(u)$,  the equation \eqref{klein-gordon}  corresponds to  the sine–Gordon equation. This case encompasses a wide array of physical applications, including the dissemination of dislocations in solid and liquid crystals, the transmission of ultra-short optical pulses within two-level systems, and the propagation of magnetic flux in Josephson junctions \cite{Barone71}.
If $f(u)=\lambda u^3$ with  a given dimensionless
parameter $\lambda$ (positive and negative for defocusing and focusing self-interaction, respectively), the equation covers the cubic wave equation under the choice of $\rho=0$ and this system   widely exists  in plasma physics \cite{Bellan14}.

 Classical time discretization methods have been extensively developed and researched in recent decades for solving SKGE, such as  splitting methods \cite{Bao22,Buchholz2018,Faou15}, trigonometric/exponential integrators \cite{Bao13,Buchholz2021,Gauckler15,Lubich99,WIW}, uniformly accurate methods \cite{Bao14,PI1,S2,Chartier15,WZ21}   and structure-preserving methods \cite{Cano06,Chen20,Cohen08-1,LI20,zhao18}. For classical methods and their analysis, strong regularity assumptions are unavoidable since usually every two temporal derivatives in the solution of
the  equation  with $f(u) \in  L^{\infty}$ can be converted to two spatial derivatives in the solution.
Therefore, in order to have $m$th-order   approximation of the solution  $(u,\partial_tu)$      in
the   space $H^{\nu}(\mathbb{T}^d)\times H^{\nu-1}(\mathbb{T}^d)$ with $\nu\geq d/2$,  the
initial data of \eqref{klein-gordon} is generally required  to be in the stronger space $H^{\nu+m-1}(\mathbb{T}^d)\times H^{\nu+m-2}(\mathbb{T}^d)$. When the accuracy of numerical methods is improved (i.e., $m$ becomes large),
higher requirement is needed for the regularity. However, such requirement may
not be satisfied since the initial data can be non-smooth in various real-world applications. For example, a high-intensity laser pulse is used to excite Langmuir waves in a plasma \cite{Borovskii1993}. The sharp leading edge of the laser pulse creates non-smooth initial conditions for the electric field, which can be modeled by the Klein-Gordon equation.

 To overcome this  barrier,  much attention has been paid to the
equations with non-smooth initial data recently. A pioneering work is \cite{A5}   for the KdV equation.  Another early work is \cite{O18}, where  a low-regularity
exponential-type integrator was designed for nonlinear  Schr\"{o}dinger equation and the proposed scheme could
have first-order convergence in  $H^{\nu}$ for
initial data in $H^{\nu+1}$ (the   regularity assumption is $H^{\nu+2}$ before the work \cite{O18}).
Since the  work \cite{A5,O18}, many different kinds of low-regularity (LR) integrators have been developed for various equations, including
Navier-Stokes equation \cite{LI22}, Dirac equation \cite{S21}, KdV equation \cite{R22,Zhao22},
  Schr\"{o}dinger equation \cite{O18,O21,O22,R21},  Boussinesq equation \cite{SU23} and Zakharov system \cite{SU24}.
To formulate the low-regularity
integrators  in a broader context,  many researches have  been done  in  \cite{A3,A1,A6,A4} and
a general class of low-regularity
integrators were introduced there for solving nonlinear  PDEs including the Sine-Gordon equation.

 For the system \eqref{klein-gordon} of Klein--Gordon equation,  some important integrators with  low-regularity property  have also been  formulated in recently years. First-order and second-order  methods were presented in \cite{R21} and the convergence of the second-order scheme was shown in the energy space $H^1(\mathbb{T}^d)\times L^2(\mathbb{T}^d)$ under the weaker regularity condition
 $(u_0(x),v_0(x))\in H^{1+\frac{d}{4}}(\mathbb{T}^d)\times H^{\frac{d}{4}}(\mathbb{T}^d)$.
For a cubic function $f(u)=u^3$ in one dimension $d=1$, a symmetric low-regularity  integrator was derived in \cite{Zhao23} which  possess second-order accuracy in $H^{\nu}(\mathbb{T}) \times H^{\nu-1}(\mathbb{T})$ with $\nu>1/2$ without loss of regularity of the solution.
 Recently, for the SKGE  \eqref{klein-gordon} with a nonlinear function $f(u)$,  second order convergence in  $H^1(\mathbb{T}^d)\times L^2(\mathbb{T}^d)$ was obtained for a new integrator in  \cite{LI23} under  the   regularity condition
 $(u_0(x),v_0(x))\in H^{1+\frac{d}{4}}(\mathbb{T}^d)\times H^{\frac{d}{4}}(\mathbb{T}^d)$ with $d=1,2,3.$
This regularity  requirement is  the same   as \cite{R21} but the proposed method has a much simple scheme.


As far as we know,  many existing specific low-regularity  integrators for the Klein--Gordon equation and/or other equations are devoted to first-order and second-order schemes.  It is an interesting question whether higher-order algorithm can achieve low regularity property
and can be formulated in a   brief manner.
The answer is positive  but the construction of specific  higher-order algorithms  is very challenging.  
By using the formalism of decorated trees  \cite{A3,A1,A6,A4}, various low-regularity
integrators can be derived successfully  including higher order schemes.
The objective of this article is to give  an alternative  and
brief  derivation of a third-order  low-regularity method.  We will construct and analyse a third-order  low-regularity  integrator (Definition \ref{def-se} given in Section \ref{sec:2})  for  the nonlinear Klein--Gordon equation \eqref{klein-gordon} with a general nonlinear function $f(u)$ and the dimension $d=1,2,3$.    The proposed
method   is stated in  the Duhamel’s formula and it has a simple scheme.
To this end, we first  embed the structure of SKGE  in the formulation and then apply   the  twisted function to the trigonometric integrals appeared in the Duhamel’s formulation. By carefully selecting
the tractable terms from the trigonometric integrals (see approximation to   I$^u$ and   I$^v$ given in Section \ref{sds-construct}), the spatial derivatives are
almost uniformly distributed to the product terms in the remainder and then the new integrator is obtained.
The proposed integrator will be shown to  have  third-order convergence in  $H^1(\mathbb{T}^d)\times L^2(\mathbb{T}^d)$ under the weaker regularity condition $(u_0(x),v_0(x))\in H^{1+\max(\mu,1)}(\mathbb{T}^d)\times H^{\max(\mu,1)}(\mathbb{T}^d)$ with $\mu> \frac{d}{2}$. Compared
with many existing LG methods, the new  integrator presented in this paper has a very simple scheme and it is very  convenient to  use
for the researchers even coming from different disciplines. Moreover, a fully-discrete  scheme is also proposed and studied in this article.
The error bounds  form spatial  and time discretisations
are simultaneously researched for SKGE \eqref{klein-gordon} with a non-smooth initial data.

The rest of this article is organized as follows. In Section \ref{sec:2}, we present the formulation of
the semi-discrete and fully-discrete methods. The error estimates of the proposed schemes are rigorously analysed in   Section \ref{sec:3}. Some  numerical  results are given in Section \ref{sec:4} to show the  performance of the new integrator in comparison with some existing methods in the literature. The  conclusions are drawn in  Section \ref{sec:5}.

\section{Construction of low-regularity  integrator}\label{sec:2}
In this section we will first introduce the notations and technical tools which are used frequently  in this paper.
Then  the new low-regularity integrator including semi-discrete and fully-discrete schemes will be presented in Subsection \ref{sds-construct} and the construction process will be given in
Subsection \ref{sds-construct}.
\subsection{Notations and technical tools}
 For simplicity,  we denote by $A\lesssim B$ the statement $A\leq CB$ for a generic constant $C>0$.
The constant $C$ may depend on $T$  and 	 $\norm{u_0}_{H^{1+\max(\mu,1)}}, \norm{v_0}_{H^{\max(\mu,1)}}$, and may be different at different
occurrences, but is always independent of the time and space discrete number of points and the time/space step sizes.

For the linear differential operator appeared in \eqref{klein-gordon}, we denote it by a concise notation \begin{equation}\label{notations A}
(\mathcal{A}w)(x):=-\Delta  w(x)+ \rho w(x)\end{equation} with $w(x)$ on $\mathbb{T}^d$.
In the construction of the integrator, we will  frequently use the following functions
\begin{equation}\label{notations w}\begin{aligned}&\sinc(t):=\frac{\sin(t)}{t},\ \quad   \qquad \alpha(t):=\left(
              \begin{array}{c}
                \cos(t\sqrt{\mathcal{A}})  \\
                t \sinc( t\sqrt{\mathcal{A}})  \\
              \end{array}
            \right),\\&\beta(t):=\left(
              \begin{array}{c}
                t \sinc( t\sqrt{\mathcal{A}})    \\
                \cos(t\sqrt{\mathcal{A}}) \\
              \end{array}
            \right),\  \gamma(t): =\left(
              \begin{array}{c}
                \sqrt{\mathcal{A}} \sin(t\sqrt{\mathcal{A}})    \\
                \cos(t\sqrt{\mathcal{A}}) \\
              \end{array}
            \right).
\end{aligned}\end{equation}

The Sobolev space of functions on the domain  $\mathbb{T}^d=[a,b]^d$ is considered in the whole paper,  and we shall refer it as  $H^{\nu}(\mathbb{T}^d)$ for any $\nu\geq 0$.
The norm on this Sobolev space    is denoted by
$$\norm{f}^2_{H^{\nu}}=(b-a)^{d}\sum_{\xi\in \mathbb{Z}^d} (1+\abs{\xi}^2)^{\nu}\abs{\widehat{f}(\xi)}^2,$$
where $\widehat{f}$ is the Fourier transform of   $f(x)$ which is defined by
$$\widehat{f}(\xi)=\frac{1}{(b-a)^d}\int_{\mathbb{T}^d} e^{-i x \cdot \xi}f(x) dx$$ with $\xi=(\xi^1,\xi^2,\ldots,\xi^d)\in \mathbb{Z}^d$
and $x=(x^1,x^2,\ldots,x^d)\in \mathbb{Z}^d$. Here $\abs{\cdot}$ is defined as
$\abs{\xi}=\sqrt{(\xi^1)^2 +(\xi^2)^2+\ldots+(\xi^d)^2}$ and $x \cdot \xi=x^1\xi^1+x^2\xi^2+\ldots+x^d\xi^d $.

For the norm on  Sobolev space,  some obvious properties are  frequently used in this article and we summarize them as follows.
 \begin{itemize}
   \item With the notation $J^{\nu}:=(1-\Delta)^{\frac{\nu}{2}}$ for $\nu\geq 0$, it is clear that
$$
\widehat{ J^{\nu}f}(\xi)=(1+\abs{\xi}^2)^{\frac{\nu}{2}}\widehat{f}(\xi),\ \ \norm{J^{\nu}f}_{L^2}=\norm{f}_{H^{\nu}}.
$$
   \item  For any function $\sigma: \mathbb{Z}^d\rightarrow \mathbb{C}^d$ such that $\abs{\sigma(\xi)}\leq C_{\sigma} (1+\abs{\xi}^2)^m$
with some constants $C_{\sigma}$ and $m$,  the operator  $\sigma( \sqrt{-\Delta}): H^{\nu}(\mathbb{T}^d)\rightarrow H^{\nu-m}(\mathbb{T}^d)$ has the following result  for $\nu\geq m$
$$\sigma(\sqrt{-\Delta})f(x)=\sum_{\xi\in \mathbb{Z}^d} \sigma(\xi)  \widehat{f}(\xi)  e^{i x \cdot \xi},\ \ \norm{\sigma(\sqrt{-\Delta})f}^2_{H^{\nu-m}}  \leq  C_{\sigma}  \norm{f}^2_{H^{\nu}}.$$

   \item  For the special  operator $ \sqrt{-\Delta}^{-1} $  which is defined by  $$\widehat{ \sqrt{-\Delta}^{-1}f}(\xi)=\left\{
\begin{split}
&
\frac{\widehat{ f}(\xi)}{\abs{\xi}},\quad
\textmd{when}\ \ \ \abs{\xi}\neq 0,\\
&0, \quad\ \ \ \ \
\textmd{when}\ \ \ \abs{\xi}= 0,
\end{split}\right.$$
it is straightforward to verify that
$\norm{\sqrt{-\Delta}^{-1}f}_{H^{\nu+1}}  \lesssim  \norm{f}_{H^{\nu}}.$
 \end{itemize}

For the operator $\mathcal{A}$  introduced in \eqref{notations A}, the following estimates are consequences of the properties stated above.
\begin{myprop}\label{lamma00}   If   $f\in H^{\nu}$ for any $\nu\geq 0$,  then the following results hold
$$\norm{\cos(t\sqrt{\mathcal{A}}) f}_{H^{\nu}}\leq \norm{f}_{H^{\nu}},\  \norm{\sin(t\sqrt{\mathcal{A}})  f}_{H^{\nu}}\leq \norm{f}_{H^{\nu}},$$
and $$ \norm{\sinc( t\sqrt{\mathcal{A}}) f}_{H^{\nu}}\leq \norm{f}_{H^{\nu}},\ \ \norm{\sqrt{\mathcal{A}}^{-1}f}_{H^{\nu+1}}\lesssim \norm{f}_{H^{\nu}}.$$

\end{myprop}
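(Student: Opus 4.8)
The plan is to exploit the fact that $\mathcal{A}=-\Delta+\rho$ is a Fourier multiplier operator, so that every function of $\sqrt{\mathcal{A}}$ acts diagonally on the Fourier coefficients and all four estimates reduce to pointwise bounds on scalar symbols evaluated on the lattice $\mathbb{Z}^d$.

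First I would diagonalise $\mathcal{A}$. Since $\widehat{-\Delta f}(\xi)=\abs{\xi}^2\widehat{f}(\xi)$, we have $\widehat{\mathcal{A}f}(\xi)=(\abs{\xi}^2+\rho)\widehat{f}(\xi)$, so $\sqrt{\mathcal{A}}$ is the multiplier with nonnegative symbol $\omega(\xi):=\sqrt{\abs{\xi}^2+\rho}$. Consequently, for any scalar function $g$, the operator $g(t\sqrt{\mathcal{A}})$ is the Fourier multiplier with symbol $g(t\,\omega(\xi))$, i.e. $\widehat{g(t\sqrt{\mathcal{A}})f}(\xi)=g(t\,\omega(\xi))\widehat{f}(\xi)$.

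For the first three estimates I would use the elementary pointwise bounds $\abs{\cos\theta}\leq 1$, $\abs{\sin\theta}\leq 1$ and $\abs{\sinc\theta}\leq 1$ (the last following from $\abs{\sin\theta}\leq\abs{\theta}$), valid for all real $\theta$. Substituting $g\in\{\cos,\sin,\sinc\}$ into the series definition of the norm gives
\begin{equation*}
\norm{g(t\sqrt{\mathcal{A}})f}_{H^{\nu}}^2=(b-a)^d\sum_{\xi\in\mathbb{Z}^d}(1+\abs{\xi}^2)^{\nu}\abs{g(t\,\omega(\xi))}^2\abs{\widehat{f}(\xi)}^2\leq\norm{f}_{H^{\nu}}^2,
\end{equation*}
which is precisely the first three inequalities; equivalently one may apply the general multiplier bound stated above to the bounded symbol $g(t\,\omega(\xi))$ with $m=0$ and $C_\sigma=1$.

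For the last estimate, $\sqrt{\mathcal{A}}^{-1}$ is the multiplier with symbol $\omega(\xi)^{-1}$ (set to $0$ at $\xi=0$ when $\rho=0$). It then suffices to bound the ratio of Sobolev weights, namely to show $(1+\abs{\xi}^2)^{(\nu+1)/2}\,\omega(\xi)^{-1}\lesssim(1+\abs{\xi}^2)^{\nu/2}$, which is equivalent to the boundedness of $\frac{1+\abs{\xi}^2}{\abs{\xi}^2+\rho}$ over $\mathbb{Z}^d$. A short monotonicity computation in $s=\abs{\xi}^2$ gives the supremum $\max(1,1/\rho)$ when $\rho>0$. The only delicate point, and the one I expect to be the main obstacle, is the case $\rho=0$, where the symbol $\abs{\xi}^{-1}$ is singular at the origin; this is handled exactly by the convention $\widehat{\sqrt{-\Delta}^{-1}f}(0)=0$ together with the fact that every nonzero lattice frequency satisfies $\abs{\xi}\geq 1$, so that $\frac{1+\abs{\xi}^2}{\abs{\xi}^2}\leq 2$ for $\xi\neq 0$. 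In all cases the quotient is bounded by a $\rho$-dependent constant, yielding $\norm{\sqrt{\mathcal{A}}^{-1}f}_{H^{\nu+1}}\lesssim\norm{f}_{H^{\nu}}$.
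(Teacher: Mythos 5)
Your proof is correct and takes essentially the same approach as the paper's: all four estimates are reduced to pointwise bounds on the Fourier symbols of the operators, which is exactly the computation the paper compresses into ``obvious'' and ``easy to check,'' followed by the same case analysis on $\rho$ for the inverse. If anything, your treatment of the last inequality is the more careful one, since your $\rho$-dependent constant $\max(1,1/\rho)$ correctly captures the behaviour of the ratio $(1+\abs{\xi}^2)/(\abs{\xi}^2+\rho)$ at $\xi=0$ for small positive $\rho$ (where the paper's claimed uniform bound $2\norm{f}_{H^{\nu}}$ for all $\rho<1$ is not accurate), and you handle the singular case $\rho=0$ explicitly via the zero-mode convention.
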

\begin{proof}The first three statements are obvious. For the last one, it is easy to check that
$\norm{\sqrt{\mathcal{A}}^{-1}f}_{H^{\nu+1}}=\norm{f}_{H^{\nu}}$ if $ \rho=1$,  $\norm{\sqrt{\mathcal{A}}^{-1}f}_{H^{\nu+1}}< \norm{f}_{H^{\nu}}$ if $ \rho>1$ and $\norm{\sqrt{\mathcal{A}}^{-1}f}_{H^{\nu+1}}< 2 \norm{f}_{H^{\nu}}$ if $ \rho<1$. Therefore, the proof is complete.
\end{proof}

The following version of the Kato–Ponce inequalities will also be needed in this paper, which was originally given in   \cite{Kato88}.

\begin{mylemma}\label{lamma0}(The Kato–Ponce inequalities)
In the regime $\nu> d/4$, we call upon the classical bilinear estimate
$$\norm{fg}_{H^{2 \nu}}\lesssim \norm{f}_{H^{2 \nu}}\norm{g}_{H^{2 \nu}},\ \ \norm{J^{-1}(Jfg)}_{H^{2 \nu}}\lesssim \norm{f}_{H^{2 \nu}}\norm{g}_{H^{2 \nu}},$$
whereas in the regime $0\leq\nu\leq d/4$, the following   bilinear estimates are true
 \begin{equation}\label{KPI}
\begin{aligned}
&\norm{fg}_{H^{2 \nu}}\lesssim \norm{f}_{H^{ \frac{d}{4}+\nu}}\norm{g}_{H^{\frac{d}{4}+\nu}}\ \ \ \textmd{for}\ \ 0\leq\nu< d/4,\\
&\norm{fg}_{H^{2 \nu}}\lesssim \norm{f}_{H^{ \frac{d}{2}+\epsilon}}\norm{g}_{H^{2 \nu}}\ \ \ \ \  \textmd{for}\ \ 0\leq\nu\leq d/4
\end{aligned}
\end{equation}
with any $\epsilon>0$.
 If $\nu>d/2$ and $f, g \in H^{\nu}$, we have the   inequality $$\norm{J^{\nu}(fg)}_{L^{2}}\lesssim \norm{f}_{H^{\nu }}\norm{g}_{H^{\nu}}.$$
 If $\delta \geq0, \nu> d/2$ and $f \in H^{\delta+\nu}$, $g \in H^{\delta}$, then
$$\norm{J^{\delta}(fg)}_{L^{2}}\lesssim \norm{f}_{H^{\delta+\nu }}\norm{g}_{H^{\delta}}.$$
\end{mylemma}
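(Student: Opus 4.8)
The plan is to recognise that every inequality in the statement is a bilinear multiplication estimate in Sobolev spaces, and that all of them descend from a single engine: the Kato--Ponce fractional Leibniz rule
\[
\norm{J^{s}(fg)}_{L^{2}}\lesssim \norm{J^{s}f}_{L^{p_{1}}}\norm{g}_{L^{q_{1}}}+\norm{f}_{L^{p_{2}}}\norm{J^{s}g}_{L^{q_{2}}},\qquad \tfrac12=\tfrac1{p_i}+\tfrac1{q_i},
\]
valid for $s>0$ and $1<p_i,q_i<\infty$ (or $q_i=\infty$ when the companion factor sits in $L^{2}$), combined with the sharp Sobolev embeddings $H^{s}(\mathbb{T}^{d})\hookrightarrow L^{p}(\mathbb{T}^{d})$, $\tfrac1p=\tfrac12-\tfrac sd$, and $H^{s}\hookrightarrow L^{\infty}$ for $s>d/2$. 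Throughout I would use $\norm{h}_{H^{s}}=\norm{J^{s}h}_{L^{2}}$ to turn each $H^{s}$ assertion into an $L^{2}$ bound for $J^{s}(fg)$, then distribute the derivatives between the two factors by tuning the Hölder pair $(p_i,q_i)$ to the available embeddings. A fully unified alternative is the Bony paraproduct decomposition $fg=T_{f}g+T_{g}f+R(f,g)$, which I would keep in reserve for the one estimate where the crude Leibniz split does not immediately suffice.

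First I would dispatch the high-regularity algebra-type bounds. For $\norm{fg}_{H^{2\nu}}\lesssim\norm{f}_{H^{2\nu}}\norm{g}_{H^{2\nu}}$ with $2\nu>d/2$, apply the rule at $s=2\nu$ with the Hölder pairs $(2,\infty)$ and $(\infty,2)$ and use $H^{2\nu}\hookrightarrow L^{\infty}$ to bound $\norm{g}_{L^{\infty}},\norm{f}_{L^{\infty}}$; the identical argument gives $\norm{J^{\nu}(fg)}_{L^{2}}\lesssim\norm{f}_{H^{\nu}}\norm{g}_{H^{\nu}}$ for $\nu>d/2$. For the asymmetric estimate $\norm{J^{\delta}(fg)}_{L^{2}}\lesssim\norm{f}_{H^{\delta+\nu}}\norm{g}_{H^{\delta}}$ I would place the surplus $\nu>d/2$ regularity on $f$: the rule yields a term $\norm{f}_{L^{\infty}}\norm{J^{\delta}g}_{L^{2}}$, fine since $H^{\delta+\nu}\hookrightarrow L^{\infty}$, and a term $\norm{J^{\delta}f}_{L^{p}}\norm{g}_{L^{q}}$ that I would close with $\tfrac1q=\tfrac12-\tfrac{\delta}{d}$ and $\tfrac1p=\tfrac{\delta}{d}$ (so that $H^{\delta}\hookrightarrow L^{q}$ and $H^{\delta+\nu}\hookrightarrow W^{\delta,p}$, the latter precisely because $\nu>d/2$), with the obvious simplification when $\delta\geq d/2$.

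The hard part will be $\norm{J^{-1}(Jfg)}_{H^{2\nu}}\lesssim\norm{f}_{H^{2\nu}}\norm{g}_{H^{2\nu}}$, because one derivative is first loaded onto $f$ and only then damped. I would rewrite it as $\norm{(Jf)g}_{H^{2\nu-1}}$ and read it as the Sobolev multiplication estimate $H^{2\nu-1}\cdot H^{2\nu}\hookrightarrow H^{2\nu-1}$ applied to $Jf\in H^{2\nu-1}$ (with $\norm{Jf}_{H^{2\nu-1}}=\norm{f}_{H^{2\nu}}$) and $g\in H^{2\nu}$. The admissibility conditions $s_{0}\leq\min(s_{1},s_{2})$, $s_{1}+s_{2}\geq0$, and $s_{1}+s_{2}-s_{0}\geq d/2$ read $2\nu-1\leq 2\nu-1$, $4\nu-1\geq0$, and $2\nu\geq d/2$, all of which reduce to $\nu\geq d/4$; the strict hypothesis $\nu>d/4$ legitimises the borderline (saturated, $s_0=s_1$) case. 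Because of this saturation the naive $(L^{2},L^{\infty})$ splitting overloads the low-frequency factor on the low-high interaction, and the required Hölder exponents are dimension-dependent, so here I would either cite the multiplication theorem directly or run the paraproduct decomposition and verify dimension-uniformly that the single derivative from $J$ is absorbed by the low-frequency factor. This bookkeeping is the crux.

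Finally, for the low-regularity regime $0\leq\nu\leq d/4$ I would again use the Leibniz rule at $s=2\nu$, now with genuinely fractional exponents matched to $H^{s}\hookrightarrow L^{p}$. For $\norm{fg}_{H^{2\nu}}\lesssim\norm{f}_{H^{d/4+\nu}}\norm{g}_{H^{d/4+\nu}}$ the natural choice is $\tfrac1{p_1}=\tfrac14+\tfrac{\nu}{d}$ and $\tfrac1{q_1}=\tfrac14-\tfrac{\nu}{d}$, for which both $H^{d/4+\nu}\hookrightarrow W^{2\nu,p_1}$ and $H^{d/4+\nu}\hookrightarrow L^{q_1}$ hold at the endpoint, legitimate because $\nu<d/4$ keeps $q_1<\infty$. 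For $\norm{fg}_{H^{2\nu}}\lesssim\norm{f}_{H^{d/2+\epsilon}}\norm{g}_{H^{2\nu}}$ I would put all smoothness on $f$, using $H^{d/2+\epsilon}\hookrightarrow L^{\infty}$ to absorb the $L^{\infty}$ factor and the small $\epsilon$ to render the remaining endpoint embedding strict. As a closing remark I would note that on the torus each of these is a verbatim instance of the Kato--Ponce inequalities in \cite{Kato88}, so the whole lemma reduces to citing that result together with the elementary embeddings collected just before Proposition \ref{lamma00}.
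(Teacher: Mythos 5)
Your proposal is correct, but it does substantially more than the paper, which offers no proof of this lemma at all: the statement is presented as a known result with a bare citation to Kato--Ponce \cite{Kato88}. Your derivation from the fractional Leibniz rule plus sharp Sobolev embeddings checks out: the exponent pair $(1/p_1,1/q_1)=(1/4+\nu/d,\;1/4-\nu/d)$ for the regime $0\le\nu<d/4$ hits exactly the admissible embeddings (non-endpoint since $q_1<\infty$ precisely when $\nu<d/4$, as you note); the $\epsilon$-slack argument for the second estimate in \eqref{KPI} correctly avoids the forbidden endpoint $q=\infty$ at $\nu=d/4$; and the asymmetric estimate with the $\nu>d/2$ surplus placed on $f$ is handled properly. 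Most valuable is your treatment of $\norm{J^{-1}(Jfg)}_{H^{2\nu}}$: you recognize it as the multiplication estimate $H^{2\nu-1}\cdot H^{2\nu}\hookrightarrow H^{2\nu-1}$, observe that the condition $s_0\le\min(s_1,s_2)$ is saturated so the strict hypothesis $\nu>d/4$ is genuinely needed to get $s_1+s_2-s_0>d/2$, and flag that for $d=1$, $\nu<1/2$ the target regularity $2\nu-1$ is negative, so the crude Leibniz split (valid only for $s>0$) cannot close this case and one must invoke the multiplication theorem or a paraproduct decomposition. That observation is something the paper's citation glosses over: the $J^{-1}(Jfg)$ bound is not literally in \cite{Kato88}, which concerns commutator estimates on $\mathbb{R}^d$, so it requires exactly the Sobolev-multiplication argument you give (plus transference to the torus). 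In short, the paper buys brevity by citing the literature; your route buys a self-contained proof that exposes precisely where each hypothesis ($\nu>d/4$ strict, $\nu<d/4$ in the first estimate of \eqref{KPI}, $\epsilon>0$) enters.
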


\subsection{Semi-discrete and fully-discrete  integrators}\label{sds}
We first directly present the scheme of semi-discrete integrator  and its deduction will be given in
Subsection \ref{sds-construct}.
\begin{mydef}\label{def-se} (\textbf{Semi-discrete  integrator})
 Let $t_n=n h $ with $n=0,1,\ldots, N$  be a uniform partition of the time interval $[0,T]$
with stepsize  $h=T/N$,  where $N$ is any given positive integer.
For solving the   Klein--Gordon
equation \eqref{klein-gordon}, we define the numerical solution $u_{n}=u_{n}(x)\approx u(t_n,x), v_{n}=v_{n}(x)\approx v(t_n,x):=\partial_tv(t_n,x)$ by the
following  semi-discrete  integrator
 \begin{equation}\label{method}
\begin{aligned}
u_{n+1}=&\cos(h \sqrt{\mathcal{A}}) u_n+h  \sinc( h \sqrt{\mathcal{A}})  v_n+h^2\Phi_1(h \sqrt{\mathcal{A}}) f( u_n)\\&+h^3  \Psi_1(h \sqrt{\mathcal{A}})f'( u_n)v_n +h^4  \Psi_2(h \sqrt{\mathcal{A}}) F_1(u_n,v_n),\\
v_{n+1}
   =&-h \mathcal{A} \sinc( h \sqrt{\mathcal{A}}) u_n+\cos(h \sqrt{\mathcal{A}}) v_n+
h  \Phi_2(h \sqrt{\mathcal{A}})  f( u_n) \\&+h^2\Phi_1(h \sqrt{\mathcal{A}})f'( u_n)v_n +h^3  \Psi_1(h \sqrt{\mathcal{A}}) \big(F_1(u_n,v_n)+f'(u_n) f(u_n) \big),
\end{aligned}
\end{equation}
where  $n=0,1,\ldots, T/h-1$, the initial value $u_0:=u_0(x), v_0:=v_0(x)$ is given in \eqref{klein-gordon}, the notation $F_1$ is
$$F_1(u_n,v_n)= f''(u_n)(v_n^2-(\nabla u_n)^2)+\rho  f(u_n)-\rho  f'(u_n)u_n,$$
 and  the coefficient functions of the integrator are defined by
 \begin{equation}\label{method-coefficient}
\begin{aligned}
 &\Phi_1(h \sqrt{\mathcal{A}})= \frac{\sinc( h \sqrt{\mathcal{A}})}{2},\ \ \  \ \ \Psi_2(h \sqrt{\mathcal{A}})= \frac{ 1-\cos( h \sqrt{\mathcal{A}})-\frac{h}{2} \sqrt{\mathcal{A}}\sin(h \sqrt{\mathcal{A}})}{(h \sqrt{\mathcal{A}})^4},\\
  &\Psi_1(h \sqrt{\mathcal{A}})= \frac{ \sinc( h \sqrt{\mathcal{A}})-\cos(h \sqrt{\mathcal{A}})}{2(h \sqrt{\mathcal{A}})^2},   \ \Phi_2(h \sqrt{\mathcal{A}})= \frac{ \cos(h \sqrt{\mathcal{A}}) + \sinc( h \sqrt{\mathcal{A}})}{2}.
\end{aligned}
\end{equation}
Here
the prime on $f$ indicates the   derivative  of $f(u)$ w.r.t. $u$.
\end{mydef}

\begin{remark}
From \eqref{method}, it can be observed clearly that this method is a kind of trigonometric integrators (\cite{hairer2006,Lubich99}). The scheme \eqref{method} is not complicated even in comparison with some first-order or second-order low-regularity integrators.  Therefore, it is convenient to be implemented in practical computations and has low computation cost. This aspect will be  demonstrated clearly by a numerical test given in Section \ref{sec:4}.
\end{remark}

Then we concern  the spatial discretization of  \eqref{method} which can be handled by using trigonometric interpolation \cite{Shen,LI23}.
To make the presentation be  more concise,  it is assumed that  $\mathbb{T}^d:=[0,1]^d$. Then  any function $W\in H^1_0(\mathbb{T}^d)\times L^2(\mathbb{T}^d)$
 can be expanded into the Fourier sine
series, i.e., $$W=\sum_{n_1,n_2,\ldots,n_d=1}^{\infty}W_{n_1,n_2,\ldots,n_d}\sin(n_1\pi x_1)\sin(n_2\pi x_2)\cdots\sin(n_d\pi x_d).$$ Choose a positive integer $N_x>0$ and denote by $I_{N_x}$and $\Pi_{N_x}$ the trigonometric interpolation and $L^2$-orthogonal projection operators onto $S_{N_x}$,
respectively, where the set $S_{N_x}$ is
  $$S_{N_x}=\{ \sum_{n_1,n_2,\ldots,n_d=1}^{N_x}W_{n_1,n_2,\ldots,n_d}\sin(n_1\pi x_1)\cdots\sin(n_d\pi x_d):\ W_{n_1,n_2,\ldots,n_d}\in \mathbb{R}^2\}.$$
With these notations and based on the semi-discrete scheme given in Definition \ref{def-se}, the    fully discrete low-regularity integrator   is defined as
follows.

\begin{mydef}\label{def-ful} (\textbf{Fully discrete  integrator})
For the semi-discrete integrator \eqref{method},  the fully discrete low-regularity integrator
is given by
 \begin{equation}\label{method-f}
\begin{aligned}
U_{n+1}=&\cos(h \sqrt{\mathcal{A}}) U_n+h  \sinc( h \sqrt{\mathcal{A}})  V_n+h^2\Phi_1(h \sqrt{\mathcal{A}})I_{N_x} f( U_n)\\&+h^3  \Psi_1(h \sqrt{\mathcal{A}})I_{N_x} \big(f'( U_n)V_n\big)+h^4  \Psi_2(h \sqrt{\mathcal{A}}) I_{N_x}F_1(U_n,V_n),\\
V_{n+1}
   =&-h \mathcal{A} \sinc( h \sqrt{\mathcal{A}}) U_n+\cos(h \sqrt{\mathcal{A}}) V_n\\&+
h  \Phi_2(h \sqrt{\mathcal{A}}) I_{N_x} f( U_n) +h^2\Phi_1(h \sqrt{\mathcal{A}})I_{N_x}\big(f'(U_n)V_n\big)  \\&+h^3  \Psi_1(h \sqrt{\mathcal{A}}) I_{N_x}\big(F_1(U_n,V_n)+ f'(U_n)  f(U_n)\big),
\end{aligned}
\end{equation}
where  $0<h <1$ is the time stepsize, $n=0,1,\ldots, T/h-1$ and the initial values are chosen as $U_0=\Pi_{N_x} u(x)$ and  $V_0=\Pi_{N_x} v(x)$ for
  $x\in \{\frac{2n}{2N_x+1}: n=1,2,\ldots,N_x \}^d$. In practical computation,  the trigonometric interpolation $I_{N_x}$ can be implemented with Fast Fourier Transform (FFT).
\end{mydef}

\subsection{Construction of semi-discrete integrator}\label{sds-construct}
In this subsection, we present the construction of the semi-discrete numerical method \eqref{method} based on twisted functions and
Duhamel’s formula.  For readers’ convenience,    the $x$-dependence of the unknown
functions is omitted and some technical
estimates of remainders are deferred to   Section \ref{sec:3}, where a rigorous error analysis will be given.

 For   the nonlinear Klein--Gordon
equation \eqref{klein-gordon},  the Duhamel’s formula at $t=t_n+s$ with $s\in \mathbb{R}$ reads
\begin{equation}\label{Duhamel}
\begin{aligned}
u(t_n+s)=&\cos(s\sqrt{\mathcal{A}}) u(t_n)+s \sinc( s\sqrt{\mathcal{A}})  v(t_n)\\&+\int_0^{s } (s-\theta)\sinc( (s-\theta)\sqrt{\mathcal{A}}) f\big( u(t_n+\theta)\big)d\theta,\\
v(t_n+s )=&-s\mathcal{A} \sinc( s\sqrt{\mathcal{A}}) u(t_n)+\cos(s\sqrt{\mathcal{A}})  v(t_n)\\&+\int_0^{s }  \cos( (s-\theta)\sqrt{\mathcal{A}}) f\big( u(t_n+\theta)\big)d\theta,
\end{aligned}
\end{equation}
which plays a crucial role in the method's formulation. Using this formula, we insert the expression of $u(t_n+\theta)$  into  the trigonometric integrals on the right hand side of \eqref{Duhamel}. Then by carefully selecting
the tractable terms from the obtained formulae and dropping some parts which do not affect third-order accuracy and regularity, the semi-discrete scheme is formulated. The detailed procedure is presented blew.

Firstly, we get the expression of $u(t_n+s)$ by the first equation of \eqref{Duhamel} and then insert this into
$f\big(u(t_n+ s)\big)$. This gives
\begin{equation}\label{funs}
\begin{aligned}
f\big(u(t_n+ s)\big)=&f\Big(\alpha^{\intercal}(s)U(t_n)\Big)+f'\Big(\alpha^{\intercal}(s)U(t_n)\Big)
\Big(u(t_n+ s)-\alpha^{\intercal}(s)U(t_n)\Big)\\
&+R_{f''}(t_n, s)\Big(u(t_n+ s)-\alpha^{\intercal}(s)U(t_n)\Big)^2,
\end{aligned}
\end{equation}
where $U(t_n)=(
             u(t_n),
             v(t_n) )^{\intercal}$   and \begin{equation}\label{RF2}\begin{aligned}&R_{f''}
=\int_0^1 \int_0^1\theta f''\Big( (1-\xi)\alpha^{\intercal}(s)U(t_n)+
\xi(1-\theta) \alpha^{\intercal}(s)U(t_n)+\theta u(t_n+ s)\Big)d\xi d\theta.\end{aligned}\end{equation}
Using the expression of $u(t_n+s)$ again in the right hand side of \eqref{funs}   yields the following expression:
\begin{equation}\label{funs-new}
\begin{aligned}
&f\big(u(t_n+ s)\big)\\
=&f\Big(\alpha^{\intercal}(s)U(t_n)\Big)
+f'\Big(\alpha^{\intercal}(s)U(t_n)\Big)
\int_0^{s }  \frac{\sin( ( s-\theta)\sqrt{\mathcal{A}})}{\sqrt{\mathcal{A}}} f\big( u(t_n+\theta)\big)d\theta \\
&+R_{f''}(t_n, s) \Big(\int_0^{s }  \frac{\sin( ( s-\theta)\sqrt{\mathcal{A}})}{\sqrt{\mathcal{A}}} f\big( u(t_n+\theta)\big)d\theta\Big)^2.
\end{aligned}
\end{equation}

Now we obtained a   desired form of $f\big(u(t_n+ s)\big)$ and based on which, the two trigonometric integrals  on the right hand side of \eqref{Duhamel} can be reformulated as
 \begin{equation}\label{local errors 1-new}
\begin{aligned}
&\int_0^{h }  \frac{\sin( (h -s)\sqrt{\mathcal{A}}) }{\sqrt{\mathcal{A}}} f\big( u(t_n+s)\big)ds
=\underbrace{\int_0^{h  }  \frac{\sin( (h -s)\sqrt{\mathcal{A}}) }{\sqrt{\mathcal{A}}}f\Big(\alpha^{\intercal}(s)U(t_n)\Big)ds}_{=:\textmd{  I}^{u}}\\
&+\underbrace{\int_0^{h  } \frac{\sin( (h -s)\sqrt{\mathcal{A}}) }{\sqrt{\mathcal{A}}}f'\Big(\alpha^{\intercal}(s)U(t_n)\Big) \int_0^{s }  \frac{\sin( ( s-\theta)\sqrt{\mathcal{A}})}{\sqrt{\mathcal{A}}} f\big( u(t_n+\theta)\big)d\theta ds}_{=:\textmd{  II}^{u}}
\\
&+\underbrace{\int_0^{h  }  \frac{\sin( (h -s)\sqrt{\mathcal{A}}) }{\sqrt{\mathcal{A}}}R_{f''}(t_n, s) \Big(\int_0^{s }  \frac{\sin( ( s-\theta)\sqrt{\mathcal{A}})}{\sqrt{\mathcal{A}}} f\big( u(t_n+\theta)\big)d\theta \Big)^2 ds}_{=:\textmd{  III}^{u}},
\end{aligned}
\end{equation}
and
 \begin{equation}\label{local errorsv 1-new}
\begin{aligned}
&\int_0^{h } \cos( (h-\theta)\sqrt{\mathcal{A}}) f\big( u(t_n+\theta)\big)d\theta=\underbrace{\int_0^{h  } \cos( (h -s)\sqrt{\mathcal{A}}) f\Big(\alpha^{\intercal}(s)U(t_n)\Big)ds}_{=:\textmd{  I}^{v}}\\
&+\underbrace{\int_0^{h  }\cos( (h -s)\sqrt{\mathcal{A}})f'\Big(\alpha^{\intercal}(s)U(t_n)\Big) \int_0^{s }  \frac{\sin( ( s-\theta)\sqrt{\mathcal{A}})}{\sqrt{\mathcal{A}}} f\big( u(t_n+\theta)\big)d\theta ds}_{=:\textmd{  II}^{v}}
\\
&+\underbrace{\int_0^{h  } \cos( (h -s)\sqrt{\mathcal{A}}) R_{f''}(t_n, s) \Big(\int_0^{s }  \frac{\sin( ( s-\theta)\sqrt{\mathcal{A}})}{\sqrt{\mathcal{A}}} f\big( u(t_n+\theta)\big)d\theta \Big)^2 ds}_{=:\textmd{  III}^{v}}.
\end{aligned}
\end{equation}

The main objective is to find some computable third-order approximations of I$^u$\&I$^v,$   II$^u$\&II$^v,$ and   III$^u$\&III$^v$, which will be derived one by one below.

 $\bullet$ \textbf{Approximation to   I$^u$ and   I$^v$.}

 We first deal with the integral I$^u$. To this end, consider the
 twisted variable  which  was  widely  used in the development
of low-regularity time discretizations   \cite{LI22,LI23,O18,O21,O22,R21,S21,Zhao23}.
In this paper, we  introduce a new twisted  function $$F(t_n+s):=\beta(-s) f\big( \alpha^{\intercal}(s) U(t_n)\big).$$
After splitting the term $\frac{\sin( (h -s)\sqrt{\mathcal{A}}) }{\sqrt{\mathcal{A}}}$ in I$^u$   into $\alpha^{\intercal}(h ) \beta(-s)$,
I$^u$   can be expressed by the twisted  function $F(t_n+s):$
 \begin{equation*}
\begin{aligned}
  \textmd{  I}^{u}
  =&\int_0^{h  }  \alpha^{\intercal}(h ) \beta(-s) f\big( \alpha^{\intercal}(s) U(t_n)\big) ds=\int_0^{h  }  \alpha^{\intercal}(h ) F(t_n+s) ds.
\end{aligned}
\end{equation*}
Considering  the Newton–Leibniz
formula for the twisted  function $F$:
$$F(t_n+s)=F(t_n)+ \int_0^{s }F'(t_n+\zeta) d\zeta,$$
we get
 \begin{equation}\label{P1-new1}
\begin{aligned}
  \textmd{  I}^{u}=&\int_0^{h  }  \alpha^{\intercal}(h ) F(t_n) ds+\int_0^{h  }  \alpha^{\intercal}(h ) \int_0^{s }F'(t_n+\zeta) d\zeta ds\\
 =&\int_0^{h  }  \alpha^{\intercal}(h ) F(t_n) ds+\int_0^{h  }  \alpha^{\intercal}(h )
 F'(t_n+\zeta)(h -\zeta) d\zeta.
\end{aligned}
\end{equation}
According to the scheme of $\alpha(h )$ given in
\eqref{notations w},  direct calculation yields the following expression
$$
  \alpha^{\intercal}(h )=\alpha^{\intercal}(h -s) M(s)\  \textmd{with}\ M(s):=\left(
                                 \begin{array}{cc}
                                  \cos(s\sqrt{\mathcal{A}}) &  s \sinc( s\sqrt{\mathcal{A}}) \\
                                 - \sqrt{\mathcal{A}} \sin( s\sqrt{\mathcal{A}}) &  \cos(s\sqrt{\mathcal{A}})  \\
                                 \end{array}
                               \right)
$$
 for any $s\in\mathbb{R}.$
Then
\eqref{P1-new1} can be expressed as
 \begin{equation*}
\begin{aligned}
\textmd{  I}^{u}=&\int_0^{h  }  \alpha^{\intercal}(h ) F(t_n) ds+\int_0^{h  }  (h -s) \alpha^{\intercal}(h -2s)
 M(2s)F'(t_n+s) ds\\
 =&\int_0^{h  }  \alpha^{\intercal}(h ) F(t_n) ds+\int_0^{h  }  (h -s) \alpha^{\intercal}(h -2s)
 M(0)F'(t_n) ds\\
 &+\int_0^{h  }  (h -s) \alpha^{\intercal}(h -2s)\int_0^{s}
 \frac{d  \big(M(2\zeta)F'(t_n+\zeta)\big)}{d\zeta}  d\zeta ds.
\end{aligned}
\end{equation*}
For the term $F'(t_n+\zeta)$ appeared above, it can be computed as:
 \begin{equation}\label{DF}
\begin{aligned}
&F'(t_n+\zeta)\\=&
 \frac{d}{d\zeta} \big( \beta(-\zeta)\big) f\big( \alpha^{\intercal}(\zeta) U(t_n)\big)+
            \beta(-\zeta)\frac{d}{d\zeta}  f\big( \alpha^{\intercal}(\zeta) U(t_n)  \\
            =&  \left(
              \begin{array}{c}
                - \cos(\zeta\sqrt{\mathcal{A}}) f\big( \alpha^{\intercal}(\zeta) U(t_n)\big) -\zeta \sinc(\zeta\sqrt{\mathcal{A}})\frac{d}{d\zeta}  f\big( \alpha^{\intercal}(\zeta) U(t_n)\big)   \\
             -\sqrt{\mathcal{A}}\sin(\zeta\sqrt{\mathcal{A}})f\big( \alpha^{\intercal}(\zeta) U(t_n)\big)+ \cos(\zeta\sqrt{\mathcal{A}}) \frac{d}{d\zeta}  f\big( \alpha^{\intercal}(\zeta) U(t_n)\big) \\
              \end{array}
            \right)\\
        =&M(-\zeta)\Big(  - f\big( \alpha^{\intercal}(\zeta) U(t_n)\big),
                                   \frac{d}{d\zeta}  f\big( \alpha^{\intercal}(\zeta) U(t_n)\big)
                              \Big)^{\intercal}.
\end{aligned}
\end{equation}
This result further leads to
 \begin{equation*}
\begin{aligned}
 &  \frac{d  }{d\zeta} M(2\zeta)F'(t_n+\zeta) =\frac{d  }{d\zeta} M(\zeta)\left(
                                \begin{array}{c}
                                  - f\big( \alpha^{\intercal}(\zeta) U(t_n)\big) \\
                                   \frac{d}{d\zeta}  f\big( \alpha^{\intercal}(\zeta) U(t_n)\big)  \\
                                \end{array}
                              \right)
 \\=&\Big(\frac{d  }{d\zeta} M(\zeta)\Big)\left(
                                \begin{array}{c}
                                  - f\big( \alpha^{\intercal}(\zeta) U(t_n)\big) \\
                                   \frac{d}{d\zeta}  f\big( \alpha^{\intercal}(\zeta) U(t_n)\big)  \\
                                \end{array}
                              \right)+M(\zeta) \frac{d  }{d\zeta} \left(
                                \begin{array}{c}
                                  - f\big( \alpha^{\intercal}(\zeta) U(t_n)\big) \\
                                   \frac{d}{d\zeta}  f\big( \alpha^{\intercal}(\zeta) U(t_n)\big)  \\
                                \end{array}
                              \right)
                               \\=&\left(
                                 \begin{array}{cc}
                                 -\sqrt{\mathcal{A}} \sin(\zeta\sqrt{\mathcal{A}}) &  \cos( \zeta\sqrt{\mathcal{A}}) \\
                                 - \mathcal{A} \cos( \zeta\sqrt{\mathcal{A}}) &  -\sqrt{\mathcal{A}} \sin(\zeta\sqrt{\mathcal{A}}) \\
                                 \end{array}
                               \right)\left(
                                \begin{array}{c}
                                  - f\big( \alpha^{\intercal}(\zeta) U(t_n)\big) \\
                                   \frac{d}{d\zeta}  f\big( \alpha^{\intercal}(\zeta) U(t_n)\big)  \\
                                \end{array}
                              \right)\\&\ \ +M(\zeta)  \left(
                                \begin{array}{c}
                                  - \frac{d  }{d\zeta}f\big( \alpha^{\intercal}(\zeta) U(t_n)\big) \\
                                   \frac{d^2}{d\zeta^2}  f\big( \alpha^{\intercal}(\zeta) U(t_n)\big)  \\
                                \end{array}
                              \right)
\\
                              =&\left(
                                \begin{array}{c}
                                 \sqrt{\mathcal{A}} \sin(\zeta\sqrt{\mathcal{A}})f\big( \alpha^{\intercal}(\zeta) U(t_n)\big)
                                 +\zeta \sinc( \zeta\sqrt{\mathcal{A}}) \frac{d^2}{d\zeta^2}  f\big( \alpha^{\intercal}(\zeta) U(t_n)\big)   \\
                                  \mathcal{A} \cos(\zeta\sqrt{\mathcal{A}}) f\big( \alpha^{\intercal}(\zeta) U(t_n)\big) +\cos(\zeta\sqrt{\mathcal{A}}) \frac{d^2}{d\zeta^2}  f\big( \alpha^{\intercal}(\zeta) U(t_n)\big)   \\
                                \end{array}
                              \right)
                             \\=&\beta(\zeta)\Upsilon(t_n,\zeta),
\end{aligned}
\end{equation*}
with the notation \begin{equation}\label{UPS}
\begin{aligned}\Upsilon(t_n,\zeta):=&   \mathcal{A} f\big( \alpha^{\intercal}(\zeta) U(t_n)\big)
                                 +\frac{d^2}{d\zeta^2}  f\big( \alpha^{\intercal}(\zeta) U(t_n)\big) \\
                                 =&   \mathcal{A} f\big( \alpha^{\intercal}(\zeta) U(t_n)\big)
                                 +   f''\big( \alpha^{\intercal}(\zeta) U(t_n)\big)\big(\gamma^{\intercal}(-\zeta) U(t_n),\gamma^{\intercal}(-\zeta) U(t_n) \big)\\&- f'\big( \alpha^{\intercal}(\zeta) U(t_n)\big)\mathcal{A}\alpha^{\intercal}(\zeta) U(t_n)\\
                                     =&f''\big( \alpha^{\intercal}(\zeta) U(t_n)\big)\big(\gamma^{\intercal}(-\zeta) U(t_n),\gamma^{\intercal}(-\zeta) U(t_n) \big)\\&-  f''\big( \alpha^{\intercal}(\zeta) U(t_n)\big)\big(\nabla \alpha^{\intercal}(\zeta) U(t_n),\nabla \alpha^{\intercal}(\zeta) U(t_n) \big)\\
                                     &+\rho  f\big( \alpha^{\intercal}(\zeta) U(t_n)\big)-\rho  f'\big( \alpha^{\intercal}(\zeta) U(t_n)\big)\alpha^{\intercal}(\zeta) U(t_n). \end{aligned}
\end{equation}
Here in the last equation, we use the definition of  $\mathcal{A}$  \eqref{notations A} and split it into $-\Delta $ and  $\rho$.
It is noted that the result of $\Upsilon(t_n,\zeta)$ is the key point  for reducing the regularity since it changes $ \mathcal{A}$ into $\nabla $ in the expression. This good aspect comes from the careful treatment of the twisted  function $F$ and the splitting of $\alpha^{\intercal}(h )$.

With the above results and the splitting of $\Upsilon(t_n,\zeta)$, we  derive that
\begin{equation*}
\begin{aligned}
\textmd{  I}^{u}=&\int_0^{h  }  \alpha^{\intercal}(h )  ds F(t_n) +\int_0^{h  }  (h -s) \alpha^{\intercal}(h -2s)
 M(0)ds F'(t_n) \\
 &+\int_0^{h  }  (h -s) \alpha^{\intercal}(h -2s)\int_0^{s}
\beta(\zeta)\Upsilon(t_n,\zeta)  d\zeta ds\\
 =&h ^2\sinc(  h \sqrt{\mathcal{A}}) f\big( u(t_{n})\big)  + \left(
                                                                   \begin{array}{c}
                                                                      h ^2/2\sinc(  h \sqrt{\mathcal{A}})  \\
                                                                       \frac{ \sin (  h \sqrt{\mathcal{A}}) -  h \sqrt{\mathcal{A}}\cos(  h \sqrt{\mathcal{A}})}{2 \sqrt{\mathcal{A}}^3}\\
                                                                   \end{array}
                                                                 \right)^{\intercal}
 \left(
                                \begin{array}{c}
                                  - f\big( u(t_n)\big) \\
                                  f'\big( u(t_n)\big) v(t_n)  \\
                                \end{array}
                              \right) \\
 &+\int_0^{h  }  (h -s) \alpha^{\intercal}(h -2s)\int_0^{s}
\beta(\zeta)(\Upsilon(t_n,0)+\Upsilon(t_n,\zeta)-\Upsilon(t_n,0))  d\zeta ds\\
                              =&h ^2/2\sinc(  h \sqrt{\mathcal{A}}) f\big( u(t_{n})\big)+\frac{ \sin (  h \sqrt{\mathcal{A}}) -  h \sqrt{\mathcal{A}}\cos(  h \sqrt{\mathcal{A}})}{2 \sqrt{\mathcal{A}}^3} f'\big( u(t_n)\big) v(t_n) \\
 &+\frac{ 2-2\cos( h \sqrt{\mathcal{A}})-h \sqrt{\mathcal{A}}\sin(h \sqrt{\mathcal{A}})}{2   \mathcal{A}^2} \Upsilon(t_n,0)+R_1(t_n),\end{aligned}
\end{equation*}
where  the remainder $R_1$ is defined as
\begin{equation}\label{R1}R_1(t_n)=\int_0^{h  }  (h -s) \alpha^{\intercal}(h -2s)\int_0^{s}
\beta(\zeta)(\Upsilon(t_n,\zeta)-\Upsilon(t_n,0))  d\zeta ds,\end{equation}
and its boundedness will be shown in Lemma \ref{lamma1} of next section.
Based on the expression   \eqref{UPS}  of $\Upsilon$, it is trivial to get
 $$\Upsilon(t_n,0)=f''(u_n)(v(t_n)^2-(\nabla u(t_n))^2)+\rho  f(u(t_n))-\rho  f'(u(t_n))u(t_n).$$

In a very similar way, one deduces that \begin{equation}\label{  Iv bound2}
\begin{aligned}
  \textmd{  I}^{v}=&\int_0^{h  } \gamma^{\intercal}(-h ) \beta(-s) f\big( \alpha^{\intercal}(s) U(t_n)\big) ds\\
 =&\int_0^{h  } \gamma^{\intercal}(-h ) F(t_n) ds+\int_0^{h  }  \gamma^{\intercal}(-h )
 F'(t_n+\zeta)(h -\zeta) d\zeta\\
 =&\int_0^{h  }  \gamma^{\intercal}(-h ) ds F(t_n) +\int_0^{h  }  (h -s)  \gamma^{\intercal}(2s-h)
 M(0)ds F'(t_n) \\
 &+\int_0^{h  }  (h -s) \gamma^{\intercal}(2s-h)\int_0^{s}
\beta(\zeta)\Upsilon(t_n,\zeta)  d\zeta ds\\
                              =&h ( \cos(h \sqrt{\mathcal{A}}) + \sinc( h \sqrt{\mathcal{A}}) )/2 f\big( u(t_{n})\big)+h ^2/2\sinc(  h \sqrt{\mathcal{A}}) f'\big( u(t_n)\big) v(t_n) \\
 &+h^3\frac{ \sinc( h \sqrt{\mathcal{A}})-\cos(h \sqrt{\mathcal{A}})}{2(h \sqrt{\mathcal{A}})^2} \Upsilon(t_n,0)+R_2(t_n)
\end{aligned}
\end{equation}
with the remainder \begin{equation}\label{R2}R_2(t_n)=\int_0^{h  }  (h -s)  \gamma^{\intercal}(2s-h)\int_0^{s}
\beta(\zeta)(\Upsilon(t_n,\zeta)-\Upsilon(t_n,0))  d\zeta ds.\end{equation}
The estimate of this remainder will be given in  Lemma \ref{lamma2}  of next section.

$\bullet$ \textbf{Approximation to   II$^{u}$ and   II$^{v}$.}

Now we turn to   II$^{u}$ and   II$^{v}$.
For the first one, fourth-order local error  will be derived as follows.
In this paper, it is assumed that  the  nonlinear function $f$ of  \eqref{klein-gordon} satisfies   $\abs{f^{(k)}(w)}\leq C_0$  for $w\in \mathbb{R}$ and $k=1,2,3$. Moreover, we consider   the  regularity condition  $(u(0,x), \partial_tu(0,x))\in H^{1+\max(1,\mu)}(\mathbb{T}^d) \times H^{\max(1,\mu)}(\mathbb{T}^d)$
with $\mu>\frac{d}{2}$.
From
Proposition \ref{lamma00}, it follows that for $d=1$
   \begin{equation}\label{  II bound}
\begin{aligned}
&\norm{\textmd{  II}^{u}}_{H^{1}}\\
\lesssim& \norm{ \int_0^{h  } \frac{ \sin( (h -s)\sqrt{\mathcal{A}})}{\sqrt{\mathcal{A}}} f'\Big(\alpha^{\intercal}(s)U(t_n)\Big) \int_0^{s }  \frac{\sin( ( s-\theta)\sqrt{\mathcal{A}})}{\sqrt{\mathcal{A}}} f\big( u(t_n+\theta)\big)d\theta ds} _{H^{1}}\\
\lesssim &  \int_0^{h  }\abs{h -s}
 \int_0^{  s }\norm{ (  s-\theta)\sinc( (s-\theta)\sqrt{\mathcal{A}}) f'\Big(\alpha^{\intercal}(s)U(t_n)\Big) f\big( u(t_n+\theta)\big)} _{H^{1}}d\theta ds\\
 \lesssim &  h ^4  \max_{s\in[0 ,h ]} \norm{f'\Big(\alpha^{\intercal}(s)U(t_n)\Big)} _{H^{1}} \max_{\zeta\in[0 ,h ]} \norm{f\big( u(t_n+\zeta)\big)} _{H^{1}} \lesssim   h ^4.
\end{aligned}
\end{equation}
 When $d=2$ or $3$, the  Kato–Ponce inequality \eqref{KPI} leads to
$$\norm{\textmd{  II}^{u}}_{H^{1}}
\lesssim  h ^4  \max_{s\in[0 ,h ]} \norm{f'\Big(\alpha^{\intercal}(s)U(t_n)\Big)} _{H^{1}} \max_{\zeta\in[0 ,h ]} \norm{f\big( u(t_n+\zeta)\big)} _{H^{\mu}} \lesssim   h ^4.
$$
However, for  the part $\textmd{  II}^{v}$, we only get the following estimate
   \begin{equation}\label{  IIv app}
\begin{aligned}
 \textmd{  II}^{v}=&\int_0^{h  }\cos( (h -s)\sqrt{\mathcal{A}})f'\Big(\alpha^{\intercal}(s)U(t_n)\Big) \int_0^{s }  \frac{\sin( ( s-\theta)\sqrt{\mathcal{A}})}{\sqrt{\mathcal{A}}} f\big( u(t_n+\theta)\big)d\theta ds\\
=& \int_0^{h  }\cos( (h -s)\sqrt{\mathcal{A}})f'(u(t_n)) \int_0^{s }  \frac{\sin( ( s-\theta)\sqrt{\mathcal{A}})}{\sqrt{\mathcal{A}}}d\theta ds  f\big( u(t_n)\big)+R_3(t_n)\\
=& h^3\frac{ \sinc( h \sqrt{\mathcal{A}})-\cos(h \sqrt{\mathcal{A}})}{2(h \sqrt{\mathcal{A}})^2} f'(u(t_n))  f\big( u(t_n)\big)+R_3(t_n),
\end{aligned}
\end{equation}
where $R_3$ is a remainder defined by
   \begin{equation}\label{R3}
\begin{aligned}
&R_3(t_n)\\
=&\int_0^{h  }\cos( (h -s)\sqrt{\mathcal{A}})f'\Big(\alpha^{\intercal}(s)U(t_n)\Big) \int_0^{s }  \frac{\sin( ( s-\theta)\sqrt{\mathcal{A}})}{\sqrt{\mathcal{A}}} f\big( u(t_n+\theta)\big)d\theta ds\\
&-\int_0^{h  }\cos( (h -s)\sqrt{\mathcal{A}})f'(u(t_n)) \int_0^{s }  \frac{\sin( ( s-\theta)\sqrt{\mathcal{A}})}{\sqrt{\mathcal{A}}}d\theta ds  f\big( u(t_n)\big).
\end{aligned}
\end{equation}
The bound of $R_3(t_n)$  will also be studied in Lemma \ref{lamma2}  of next section.

 $\bullet$ \textbf{Approximation to   III$^{u}$ and   III$^{v}$.}

 Finally, we pay attention to the bounds of   III$^{u}$ and   III$^{v}$ which are respectively presented in \eqref{local errors 1-new} and \eqref{local errorsv 1-new}.
Using Proposition \ref{lamma00} again, the following
estimate holds
   \begin{equation*}
\begin{aligned}
  &\norm{\textmd{  III}^{u}}_{H^{1}}\\\lesssim &\norm{ \int_0^{h  }  \frac{\sin( (h -s)\sqrt{\mathcal{A}})}{\sqrt{\mathcal{A}}} R_{f''}(t_n, s) \Big(\int_0^{s }  \frac{\sin( ( s-\theta)\sqrt{\mathcal{A}})}{\sqrt{\mathcal{A}}} f\big( u(t_n+\theta)\big)d\theta \Big)^2 ds} _{H^{1}}\\
\lesssim &  \int_0^{h  } (h -s)
 \norm{R_{f''}(t_n, s) \Big(\int_0^{s }( s-\theta)  \sinc( ( s-\theta)\sqrt{\mathcal{A}})  f\big( u(t_n+\theta)\big) d\theta \Big)^2} _{H^{1}}ds.
\end{aligned}
\end{equation*}
In view of \eqref{RF2}, we obtain for $d=1$
   \begin{equation}\label{  III bound}
\begin{aligned}
   \norm{\textmd{  III}^{u}}_{H^{1}}    \lesssim&   \int_0^{h} (h -s)
 \norm{R_{f''}(t_n, s)} _{H^{1}} \int_0^{s }( s-\theta)^2    \norm{ f\big( u(t_n+\theta)\big)}^2 _{H^{1}} d\theta  ds\\
 \lesssim  & h ^5    \max_{s\in[0 ,h ]} \norm{R_{f''}(t_n, s)} _{H^{1}}  \max_{\zeta\in[0 ,h ]} \norm{f\big( u(t_n+\zeta)\big)} ^2_{H^{1}}
 \lesssim    h ^5
\end{aligned}
\end{equation}
and  for $d=2$ or $3$
$$   \norm{\textmd{  III}^{u}}_{H^{1}}    \lesssim h ^5    \max_{s\in[0 ,h ]} \norm{R_{f''}(t_n, s)} _{H^{1}}  \max_{\zeta\in[0 ,h ]} \norm{f\big( u(t_n+\zeta)\big)} ^2_{H^{\mu}}
 \lesssim    h ^5.$$
By the same arguments, it is arrived that
   \begin{equation}\label{  IIIv bound}
\begin{aligned}
  &\norm{\textmd{  III}^{v}}_{L^{2}}\lesssim\norm{\textmd{  III}^{v}}_{H^{1}} \\\lesssim &\norm{ \int_0^{h  }   \cos( (h -s)\sqrt{\mathcal{A}}) R_{f''}(t_n, s) \Big(\int_0^{s }  \frac{\sin( ( s-\theta)\sqrt{\mathcal{A}})}{\sqrt{\mathcal{A}}} f\big( u(t_n+\theta)\big)d\theta \Big)^2 ds} _{L^{2}}\\
\lesssim &  \int_0^{h  }
 \norm{R_{f''}(t_n, s) \Big(\int_0^{s }( s-\theta)  \sinc( ( s-\theta)\sqrt{\mathcal{A}})  f\big( u(t_n+\theta)\big) d\theta \Big)^2} _{H^{1}}ds\\
  \lesssim  & h ^4    \max_{s\in[0 ,h ]} \norm{R_{f''}(t_n, s)} _{H^{1}}  \max_{\zeta\in[0 ,h ]} \norm{f\big( u(t_n+\zeta)\big)} ^2_{H^{\max(1,\mu)}}
 \lesssim    h ^4.
\end{aligned}
\end{equation}

As a result, these two parts can be dropped in the numerical scheme without bringing any impact on the accuracy and regularity requirement.

Based on these results, we define the  method for  the Klein--Gordon
equation \eqref{klein-gordon} (Definition \ref{def-se}) by considering \eqref{Duhamel} as well as \eqref{local errors 1-new}-\eqref{local errorsv 1-new} and dropping the remainders $R_1(t_n),R_2(t_n),R_3(t_n)$ and   II$^{u}$,   III$^{u}$,   III$^{v}$ appeared in the above formulation. The construction process of semi-discrete integrator is complete.

\section{Convergence}\label{sec:3}
In this section, we shall derive the convergence  of the proposed  semi-discrete  and fully-discrete integrators.
For each scheme, we will first present the main result and then  prove the error estimates in a  low regularity condition.
\subsection{Convergence of semi-discrete scheme}

\begin{mytheo}  \label{symplectic thm}
Let the  nonlinear function $f$ of the Klein--Gordon equation \eqref{klein-gordon} satisfy   the Lipschitz continuity conditions $\abs{f^{(k)}(w)}\leq C_0$  for $w\in \mathbb{R}$ and $k=1,2,3$.
Under the  regularity condition  \begin{equation}\label{rc exact}(u(0,x), \partial_tu(0,x))\in [H^{1+\max(\mu,1)}(\mathbb{T}^d)\bigcap H^{\max(\mu,1)}_0(\mathbb{T}^d)]\times H^{\max(\mu,1)}(\mathbb{T}^d)\end{equation}
with $\mu>\frac{d}{2}$,  there exist positive constants $C$ and $h_0$ such
that for any $h\in(0,h_0]$
 the numerical result $u_n,v_n$ produced  in Definition \ref{def-se} has the
global error:
 \begin{equation}\label{error bound}
\begin{aligned}
\max_{0\leq n \leq T/h } \norm{u_n-u(t_n)}_{H^{1}} \leq Ch ^3,\ \ \
\max_{0\leq n \leq T/h } \norm{v_n-v(t_n)}_{L^{2}}\leq Ch ^3,
\end{aligned}
\end{equation}
where $C$  depends only on $C_0$ and $T$.
\end{mytheo}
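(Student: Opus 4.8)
The plan is to prove \eqref{error bound} by a Lady Windermere's fan (telescoping) argument: the global error is the sum of the one-step local errors transported by the numerical flow, and a discrete Gronwall inequality converts an $O(h^4)$ local error into an $O(h^3)$ global error over the $T/h$ steps. Let $\Phi_h$ denote the one-step map given by the right-hand side of \eqref{method}, and measure errors in the energy norm $\norm{(u,v)}_{\mathcal E}^2:=\norm{\sqrt{\mathcal A}\,u}_{L^2}^2+\norm{v}_{L^2}^2$, which is equivalent to $\norm{u}_{H^1}^2+\norm{v}_{L^2}^2$ on $H^1_0(\mathbb T^d)\times L^2(\mathbb T^d)$ for every $\rho\ge0$. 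Writing $e_n:=(u_n-u(t_n),\,v_n-v(t_n))$, I would split
\[ e_{n+1}=\big(\Phi_h(u_n,v_n)-\Phi_h(u(t_n),v(t_n))\big)+\big(\Phi_h(u(t_n),v(t_n))-(u(t_{n+1}),v(t_{n+1}))\big), \]
so that the first bracket is controlled by the stability of $\Phi_h$ and the second is the local truncation error $\delta_{n+1}$.

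First I would estimate $\delta_{n+1}$. By the construction in Subsection \ref{sds-construct}, $\Phi_h$ is obtained from the exact Duhamel identity \eqref{Duhamel} and the decompositions \eqref{local errors 1-new}--\eqref{local errorsv 1-new} by discarding precisely $R_1(t_n)$, $\mathrm{II}^{u}$, $\mathrm{III}^{u}$ from the $u$-equation and $R_2(t_n)$, $R_3(t_n)$, $\mathrm{III}^{v}$ from the $v$-equation. Hence $\delta_{n+1}$ equals the negative sum of these discarded quantities, and I would bound it by invoking Lemma \ref{lamma1} for $\norm{R_1(t_n)}_{H^1}\lesssim h^4$ and Lemma \ref{lamma2} for $\norm{R_2(t_n)}_{L^2},\norm{R_3(t_n)}_{L^2}\lesssim h^4$, together with the bounds already derived in the construction, namely $\norm{\mathrm{II}^{u}}_{H^1}\lesssim h^4$, $\norm{\mathrm{III}^{u}}_{H^1}\lesssim h^5$ and $\norm{\mathrm{III}^{v}}_{L^2}\lesssim h^4$. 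This gives $\norm{\delta_{n+1}}_{\mathcal E}\lesssim h^4$. All of these rely on the exact solution remaining in the class \eqref{rc exact} throughout $[0,T]$, which I would record at the outset as $\sup_{t\in[0,T]}\big(\norm{u(t)}_{H^{1+\max(\mu,1)}}+\norm{\partial_t u(t)}_{H^{\max(\mu,1)}}\big)<\infty$, a consequence of the global well-posedness and propagation of regularity for \eqref{klein-gordon} when $f'$ is bounded.

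Next I would establish stability of $\Phi_h$ in $\norm{\cdot}_{\mathcal E}$. The linear part of $\Phi_h$ is the Klein--Gordon propagator, which is a rotation in each Fourier mode and hence an exact isometry in $\norm{\cdot}_{\mathcal E}$; in particular the derivative-losing entry $-h\mathcal A\sinc(h\sqrt{\mathcal A})=-\sqrt{\mathcal A}\sin(h\sqrt{\mathcal A})$ of the $v$-row is compensated by measuring the $u$-error in $H^1$, which is the reason the pairing $H^1\times L^2$ is natural. The nonlinear differences---$f(u_n)-f(u(t_n))$, $f'(u_n)v_n-f'(u(t_n))v(t_n)$ and the $F_1$-difference---each carry at least one factor of $h$, the slowest being $h\,\Phi_2(h\sqrt{\mathcal A})\big(f(u_n)-f(u(t_n))\big)$ in the $v$-row; using $\abs{f^{(k)}}\le C_0$ and the bilinear estimates of Lemma \ref{lamma0}, I would bound them by $Ch\,\norm{e_n}_{\mathcal E}$, with the constant depending on the strong-norm sizes of $(u_n,v_n)$ and $(u(t_n),v(t_n))$. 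This yields the recursion $\norm{e_{n+1}}_{\mathcal E}\le(1+Ch)\norm{e_n}_{\mathcal E}+Ch^4$, and the discrete Gronwall inequality over the $T/h$ steps gives $\max_n\norm{e_n}_{\mathcal E}\le e^{CT}(T/h)\,Ch^4\lesssim h^3$, which is \eqref{error bound}.

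The \emph{main obstacle} is justifying the constant $C$ in the stability step, i.e.\ proving a uniform a priori bound on the numerical solution in the strong norm $H^{1+\max(\mu,1)}(\mathbb T^d)\times H^{\max(\mu,1)}(\mathbb T^d)$. Since the error is measured only in the weak energy norm, this cannot be read off from \eqref{error bound}; instead I would run a second, coupled induction showing that $\Phi_h$ maps the strong-norm ball into itself with growth factor $1+Ch$, so that $\max_n\big(\norm{u_n}_{H^{1+\max(\mu,1)}}+\norm{v_n}_{H^{\max(\mu,1)}}\big)$ stays bounded on $[0,T]$ for $h\le h_0$. Here the propagator is again an isometry in the strong norm, while the nonlinear terms are controlled by Moser-type estimates together with the smoothing built into the coefficient operators (since $h^2\Phi_1$, $h^3\Psi_1$ and $h^4\Psi_2$ are of nonpositive order). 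The delicate point is that $H^1$ fails to be an algebra for $d=2,3$ (as $d/2\ge1$), so every nonlinear product---most notably $(\nabla u_n)^2$ inside $F_1$---must be split by Kato--Ponce with one factor placed in the weak norm and the other in the algebra regime $H^{\mu}$, $\mu>d/2$; checking that the derivative distribution engineered in the construction closes the weak-norm stability estimate and the strong-norm a priori bound simultaneously is where the bulk of the work lies.
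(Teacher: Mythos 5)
Your proposal is correct and follows essentially the same route as the paper's proof: the paper likewise writes the error recurrence as local truncation terms (bounded by $h^4$ via Lemmas \ref{lamma1}--\ref{lamma2} and the bounds on II$^{u}$, III$^{u}$, III$^{v}$) plus stability terms (bounded by $Ch$ times the energy-norm error via the Lipschitz conditions and Kato--Ponce), establishes the strong-norm a priori bound \eqref{bound num} on the numerical solution by exactly the coupled induction you describe, and closes with Gronwall. Your one refinement --- measuring errors in $\norm{\sqrt{\mathcal{A}}\,u}_{L^2}^2+\norm{v}_{L^2}^2$ so that the free propagator is an exact isometry rather than merely bounded --- is a slightly cleaner way to justify the $(1+Ch)$ per-step growth that the paper asserts directly in the $H^1\times L^2$ norm, but it does not change the structure of the argument.
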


It is noted that for the traditional third-order integrators,  the estimates \eqref{error bound} usually hold under the
the requirement  $$(u(0,x), \partial_tu(0,x))\in  H^{3}(\mathbb{T}^d) \times H^{2}(\mathbb{T}^d).$$
In comparison with this condition, the regularity \eqref{rc exact} is lower.
The proof of Theorem  \ref{symplectic thm} is given in the rest part of this section.  We begin with  the bounds on the remainders $R_1(t_n)$, $R_2(t_n)$ and   $R_3(t_n)$ which are presented in  \eqref{R1}, \eqref{R2} and \eqref{R3}, respectively. 
\begin{mylemma}\label{lamma1}
Under the  conditions of Theorem   \ref{symplectic thm},
the remainder $R_1(t_n)$ given in \eqref{R1}  is bounded by
   \begin{equation}\label{R1 bound result}
\begin{aligned}\norm{R_1(t_n)}_{H^{1}}\lesssim h^4.
\end{aligned}
\end{equation}
\end{mylemma}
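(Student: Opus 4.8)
The plan is to estimate $R_1(t_n)$ in \eqref{R1} without ever differentiating the integrand in $\zeta$. Instead I peel off the operator factors $\alpha^{\intercal}(h-2s)$ and $\beta(\zeta)$, extract from their product an extra power of $h$, and then control the scalar $\Upsilon(t_n,\zeta)-\Upsilon(t_n,0)$ merely in norm (uniformly in $H^1$). The powers of $h$ needed for \eqref{R1 bound result} will come entirely from this operator product together with the weight $(h-s)$ and the double integration, not from smallness of the $\Upsilon$-difference.

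The first step is an operator bound. Using \eqref{notations w} and writing $\tfrac{\sin(t\sqrt{\mathcal{A}})}{\sqrt{\mathcal{A}}}=t\,\sinc(t\sqrt{\mathcal{A}})$, the dot product expands as
\[
\alpha^{\intercal}(h-2s)\beta(\zeta)=\cos((h-2s)\sqrt{\mathcal{A}})\,\zeta\,\sinc(\zeta\sqrt{\mathcal{A}})+(h-2s)\,\sinc((h-2s)\sqrt{\mathcal{A}})\,\cos(\zeta\sqrt{\mathcal{A}}).
\]
Since $\cos(t\sqrt{\mathcal{A}})$ and $\sinc(t\sqrt{\mathcal{A}})$ are bounded on $H^{1}$ by Proposition \ref{lamma00}, for any scalar $\Psi\in H^{1}$ one obtains
\[
\norm{\alpha^{\intercal}(h-2s)\beta(\zeta)\Psi}_{H^{1}}\leq(\zeta+\abs{h-2s})\norm{\Psi}_{H^{1}}\lesssim h\,\norm{\Psi}_{H^{1}},
\]
because $\zeta\leq s\leq h$ and $\abs{h-2s}\leq h$ for $s\in[0,h]$. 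This structural factor of $h$, produced by the twisting and the splitting of $\alpha^{\intercal}(h)$, is precisely what upgrades the expected $h^{3}$ to $h^{4}$.

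The second step is the uniform bound $\sup_{\zeta\in[0,h]}\norm{\Upsilon(t_n,\zeta)}_{H^{1}}\lesssim 1$, which immediately gives $\norm{\Upsilon(t_n,\zeta)-\Upsilon(t_n,0)}_{H^{1}}\lesssim 1$. Writing $w=\alpha^{\intercal}(\zeta)U(t_n)$ and $g=\gamma^{\intercal}(-\zeta)U(t_n)$, Proposition \ref{lamma00} and the regularity \eqref{rc exact} give $w\in H^{1+\max(\mu,1)}$ and $\nabla w,\,g\in H^{\max(\mu,1)}\hookrightarrow H^{\mu}$, with norms controlled by $\norm{u(t_n)}_{H^{1+\max(\mu,1)}},\norm{v(t_n)}_{H^{\max(\mu,1)}}$ and hence by $C_0,T$ through the a priori bound on the exact solution. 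For each product in \eqref{UPS} I place the composition ($f''(w)$, $f'(w)$, or $f(w)$) in the \emph{low}-regularity slot $H^{1}$ and the linear quantities ($g^{2}$, $(\nabla w)^{2}$, $w$) in the \emph{high}-regularity slot $H^{\mu}$. Putting $f^{(k)}(w)$ in $H^{1}$ costs only one bounded derivative, namely $f'$, $f''$, $f'''$, so $\norm{f''(w)}_{H^{1}}\lesssim 1+\norm{w}_{H^{1}}\lesssim 1$ and similarly for $f'(w),f(w)$; the factors $g^{2},(\nabla w)^{2}\in H^{\mu}$ are controlled by the algebra property of $H^{\mu}$ ($\mu>d/2$). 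For $d=1$ (where $\max(\mu,1)=1$) the algebra property of $H^{1}$ closes each estimate directly, while for $d=2,3$ the Kato–Ponce inequality \eqref{KPI}, in the form $\norm{f''(w)\,g^{2}}_{H^{1}}\lesssim\norm{g^{2}}_{H^{d/2+\epsilon}}\norm{f''(w)}_{H^{1}}$, does; the two $\rho$-terms are treated the same way. This yields $\norm{\Upsilon(t_n,\zeta)}_{H^{1}}\lesssim 1$.

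Combining the two steps in \eqref{R1},
\[
\norm{R_1(t_n)}_{H^{1}}\leq\int_0^{h}(h-s)\int_0^{s}\norm{\alpha^{\intercal}(h-2s)\beta(\zeta)\big(\Upsilon(t_n,\zeta)-\Upsilon(t_n,0)\big)}_{H^{1}}\,d\zeta\,ds\lesssim h\int_0^{h}(h-s)\,s\,ds\lesssim h^{4},
\]
which is \eqref{R1 bound result}. The main obstacle is the second step, and it is exactly where the design of the scheme pays off: a Newton–Leibniz expansion of $\Upsilon(t_n,\zeta)-\Upsilon(t_n,0)$ in $\zeta$ would differentiate $g=\gamma^{\intercal}(-\zeta)U$ and reintroduce a term $\mathcal{A}\cos(\zeta\sqrt{\mathcal{A}})u$, costing two spatial derivatives on $u$ and breaking the low-regularity budget. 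The remedy is to not differentiate in $\zeta$ at all—the powers of $h$ are already supplied by the operator product $\alpha^{\intercal}(h-2s)\beta(\zeta)$ and the weight $(h-s)$—and to rely on the fact that \eqref{UPS} has already traded $\mathcal{A}$ for $\nabla$, so that only $H^{1+\max(\mu,1)}\times H^{\max(\mu,1)}$ regularity together with three bounded derivatives of $f$ is needed.
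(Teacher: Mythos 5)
Your proof is correct, and it takes a genuinely different route from the paper's. Both arguments begin from the same kernel identity $\alpha^{\intercal}(h-2s)\beta(\zeta)=\frac{\sin((h-2s+\zeta)\sqrt{\mathcal{A}})}{\sqrt{\mathcal{A}}}$, but they spend this kernel on different things. The paper exploits its \emph{smoothing}: $\norm{\sin((h-2s+\zeta)\sqrt{\mathcal{A}})\,\sqrt{\mathcal{A}}^{-1}\Psi}_{H^1}\lesssim\norm{\Psi}_{L^2}$ uniformly in the time arguments, which yields only $h^3$ from the weighted double integral and therefore forces the paper to prove the Lipschitz-in-$\zeta$ bound $\norm{\Upsilon(t_n,\zeta)-\Upsilon(t_n,0)}_{L^2}\lesssim\zeta$ via the four-part decomposition $\Delta_1 f,\dots,\Delta_4 f$ — the bulk of its proof. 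You instead exploit its \emph{smallness}: $\frac{\sin(t\sqrt{\mathcal{A}})}{\sqrt{\mathcal{A}}}=t\,\sinc(t\sqrt{\mathcal{A}})$ has $H^1\to H^1$ operator norm $\lesssim h$ for $\abs{t}\leq 2h$, so you need only the uniform bound $\sup_{\zeta\in[0,h]}\norm{\Upsilon(t_n,\zeta)}_{H^1}\lesssim 1$, with no difference estimates at all; your verification of that bound is sound (compositions $f(w),f'(w),f''(w)$ in $H^1$ using boundedness of $f',f'',f'''$; quadratic factors $g^2,(\nabla w)^2$ in $H^{\max(\mu,1)}$ by the algebra property; products closed by \eqref{KPI}, including the admissible boundary case $d=2$, $\nu=1/2=d/4$). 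Your route is shorter, and as a side benefit it never invokes $\sqrt{\mathcal{A}}^{-1}$ as a standalone operator, which is delicate on the zero Fourier mode when $\rho=0$, whereas $t\,\sinc(t\sqrt{\mathcal{A}})$ is unambiguously bounded there. What the paper's heavier route buys is reusability: the difference estimate \eqref{zeta- bound result} is exactly what Lemma \ref{lamma2} needs for $R_2(t_n)$, whose kernel $\gamma^{\intercal}(2s-h)\beta(\zeta)=\cos((h-2s+\zeta)\sqrt{\mathcal{A}})$ offers neither smallness nor smoothing, so your device cannot be transplanted there and the $L^2$ Lipschitz bound must be proved anyway elsewhere in the paper; as a self-contained proof of Lemma \ref{lamma1}, however, yours is the more economical one.
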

\begin{proof}
By using the expression \eqref{R1},  $R_1(t_n)$ can be estimated as
   \begin{equation}\label{R1RR}
\begin{aligned}
&\norm{R_1(t_n)}_{H^{1}} \\ =  &  \norm{\int_0^{h  } \int_0^{s} (h -s)  \frac{ \sin( (h-2s+\zeta)\sqrt{\mathcal{A}}) }{\sqrt{\mathcal{A}}}\big( \Upsilon(t_n,\zeta)-\Upsilon(t_n,0)\big)d\zeta ds}_{H^{1}} \\
\lesssim&   \int_0^{h  } (h -s) \int_0^{s} \norm{    \sin( (h-2s+\zeta)\sqrt{\mathcal{A}})   \frac{\Upsilon(t_n,\zeta)-\Upsilon(t_n,0) }{\sqrt{\mathcal{A}}} }_{H^{1}}d\zeta ds \\
\lesssim&   \int_0^{h  } (h -s) \int_0^{s}  \norm{ \frac{\Upsilon(t_n,\zeta)-\Upsilon(t_n,0) }{\sqrt{\mathcal{A}}} }_{H^{1}}d\zeta ds\\
\lesssim&  h ^3  \max_{\zeta \in[0,h ]} \norm{\frac{ \Upsilon(t_n,\zeta)-\Upsilon(t_n,0)}{\sqrt{\mathcal{A}}}}_{H^{1}}\\
\lesssim &  h ^3  \max_{\zeta \in[0,h ]} \norm{\Upsilon(t_n,\zeta)-\Upsilon(t_n,0)}_{L^{2}}.
\end{aligned}
\end{equation}
Then  we are devoted to the bound of $\Upsilon(t_n,\zeta)-\Upsilon(t_n,0)$. According to the result \eqref{UPS} of  $\Upsilon$, we decompose  $\Upsilon(t_n,\zeta)-\Upsilon(t_n,0)$ into four parts
   \begin{equation}\label{mUpsilon}
\begin{aligned}
&\Upsilon(t_n,\zeta)-\Upsilon(t_n,0)\\
=&\underbrace{f''\big( \alpha^{\intercal}(\zeta) U(t_n)\big)\big(\gamma^{\intercal}(-\zeta) U(t_n) \big)^2-
f''\big( \alpha^{\intercal}(0) U(t_n)\big)\big(\gamma^{\intercal}(0) U(t_n) \big)^2}_{=:\Delta_1 f}\\&
+ \underbrace{f''\big( \alpha^{\intercal}(0) U(t_n)\big)\big(\nabla \alpha^{\intercal}(0) U(t_n) \big)^2-  f''\big( \alpha^{\intercal}(\zeta) U(t_n)\big)\big(\nabla \alpha^{\intercal}(\zeta) U(t_n) \big)^2}_{=:\Delta_2 f}\\
                                     &+\underbrace{\rho  f\big( \alpha^{\intercal}(\zeta) U(t_n)\big)-\rho  f\big( \alpha^{\intercal}(0) U(t_n)\big)}_{=:\Delta_3 f}\\
                                     &+\underbrace{\rho  f'\big( \alpha^{\intercal}(0) U(t_n)\big)\alpha^{\intercal}(0) U(t_n)-\rho  f'\big( \alpha^{\intercal}(\zeta) U(t_n)\big)\alpha^{\intercal}(\zeta) U(t_n)}_{=:\Delta_4 f}.
                                     \end{aligned}
\end{equation}
In what follows, we  will deduce the results for these four terms one by one.

 $\bullet$ \textbf{Bound on $\norm{\Delta_1 f}_{L^{2}}$.}

We first recall the Kato–Ponce inequalities given in Lemma \ref{lamma0} and as a result
   \begin{equation*}
\begin{aligned}
\norm{\Delta_1 f}_{L^{2}}
\lesssim&\norm{f''\big( \alpha^{\intercal}(\zeta) U(t_n)\big)-f''\big( \alpha^{\intercal}(0) U(t_n)\big)}_{H^{\mu}}\norm{\big(\gamma^{\intercal}(-\zeta) U(t_n) \big)^2}_{L^{2}}\\
&+\norm{f''\big( \alpha^{\intercal}(0) U(t_n)\big)}_{H^{\mu}}\norm{\big(\gamma^{\intercal}(-\zeta) U(t_n) \big)^2-\big(\gamma^{\intercal}(0) U(t_n) \big)^2}_{L^{2}}\\
\lesssim&\norm{f''\big( \alpha^{\intercal}(\zeta) U(t_n)\big)-f''\big( \alpha^{\intercal}(0) U(t_n)\big)}_{H^{\mu}}\norm{\big(\gamma^{\intercal}(-\zeta) U(t_n) \big)^2}_{L^{2}}\\
&+\norm{f''\big( \alpha^{\intercal}(0) U(t_n)\big)}_{H^{\mu}}\norm{ \gamma^{\intercal}(-\zeta) U(t_n) +\gamma^{\intercal}(0) U(t_n) }_{H^{\mu}}\\&\ \ \ \norm{ \gamma^{\intercal}(-\zeta) U(t_n)  - \gamma^{\intercal}(0) U(t_n)  }_{L^{2}}.\end{aligned}
\end{equation*}
Based on this result, it is needed to estimate
   \begin{equation*}
\begin{aligned}
&\norm{f''\big( \alpha^{\intercal}(\zeta) U(t_n)\big)-f''\big( \alpha^{\intercal}(0) U(t_n)\big)}_{H^{\mu}}\\
\lesssim&\norm{  \alpha^{\intercal}(\zeta) U(t_n)- \alpha^{\intercal}(0) U(t_n)}_{H^{\mu}}\\
 \lesssim&\norm{  \cos(\zeta\sqrt{\mathcal{A}}) u(t_n)-u(t_n)+\zeta \sinc( \zeta\sqrt{\mathcal{A}})  v(t_n)}_{H^{\mu}}
 \\
 \lesssim&\norm{  -2\sin(\zeta\sqrt{\mathcal{A}}/2) \sinc(\zeta\sqrt{\mathcal{A}}/2) \zeta\sqrt{\mathcal{A}} u(t_n)}_{H^{\mu}}+\zeta\norm{ v(t_n)}_{H^{\mu}} \\
 \lesssim& \zeta\big(\norm{  u(t_n)}_{H^{1+\mu}}+\norm{ v(t_n)}_{H^{\mu}}\big)\end{aligned}
\end{equation*}
by using Proposition \ref{lamma00}.  Then the Sobolev embedding theorem shows that
   \begin{equation*}
\begin{aligned}
&\norm{ (\gamma^{\intercal}(-\zeta) U(t_n))^2}_{L^{2}}\\
 \lesssim&  \norm{ \gamma^{\intercal}(-\zeta) U(t_n)}^2_{L^{4}}
  \lesssim  \norm{ \gamma^{\intercal}(-\zeta) U(t_n)}^2_{W^{1,p}}  \quad (W^{1,p}\hookrightarrow L^{4})\\
   \lesssim& \norm{ \gamma^{\intercal}(-\zeta) U(t_n)}^2_{W^{\frac{d}{4},2}}\quad   (W^{\frac{d}{4},2}\hookrightarrow W^{1,p})\\
 \lesssim &\norm{ \gamma^{\intercal}(-\zeta) U(t_n)}^2_{H^{\frac{d}{4}}}
 \lesssim  \norm{-\sqrt{\mathcal{A}} \sin(\zeta\sqrt{\mathcal{A}}) u(t_n)+\cos(\zeta\sqrt{\mathcal{A}}) v(t_n)}^2_{H^{\frac{d}{4}}}\\
  \lesssim&  \norm{  u(t_n)}^2_{H^{1+\frac{d}{4}}}+\norm{ v(t_n)}^2_{H^{\frac{d}{4}}}+2\norm{  u(t_n)}_{H^{1+\frac{d}{4}}}\norm{ v(t_n)}_{H^{\frac{d}{4}}}.
\end{aligned}
\end{equation*}
Meanwhile, it follows from \eqref{notations w} that
   \begin{equation*}
\begin{aligned}
 &\norm{ \gamma^{\intercal}(-\zeta) U(t_n) +\gamma^{\intercal}(0) U(t_n) }_{H^{\mu}} \\\lesssim& \norm{-\sqrt{\mathcal{A}} \sin(\zeta\sqrt{\mathcal{A}}) u(t_n)+\cos(\zeta\sqrt{\mathcal{A}}) v(t_n)+v(t_n)} _{H^{\mu}}\\
 \lesssim& \norm{ u(t_n)} _{H^{1+\mu}}+\norm{ v(t_n)} _{H^{\mu}}.
\end{aligned}
\end{equation*}
Analogously, we obtain
   \begin{equation*}
\begin{aligned}
 &\norm{ \gamma^{\intercal}(-\zeta) U(t_n) -\gamma^{\intercal}(0) U(t_n) }_{L^{2}}\\ \lesssim& \norm{-\sqrt{\mathcal{A}} \sin(\zeta\sqrt{\mathcal{A}}) u(t_n)+\cos(\zeta\sqrt{\mathcal{A}}) v(t_n)-v(t_n)} _{L^{2}}\\
\lesssim& \norm{\zeta\sqrt{\mathcal{A}} \sqrt{\mathcal{A}} \sinc(\zeta\sqrt{\mathcal{A}}) u(t_n)} _{L^{2}}+\norm{ -2\sin(\zeta\sqrt{\mathcal{A}}/2) \sinc(\zeta\sqrt{\mathcal{A}}/2) \zeta\sqrt{\mathcal{A}}  v(t_n)} _{L^{2}}\\
  \lesssim& \zeta \norm{ u(t_n)} _{H^{2}}+ \zeta \norm{   v(t_n)} _{H^{1}}.
\end{aligned}
\end{equation*}
Under the regularity condition
$$(u(0,x), \partial_tu(0,x))\in  H^{2}(\mathbb{T}^d) \times H^{1}(\mathbb{T}^d),$$
one gets
$$\norm{ \gamma^{\intercal}(-\zeta) U(t_n) -\gamma^{\intercal}(0) U(t_n) }_{L^{2}}   \lesssim  \zeta.$$

 Overall, collecting all the estimates together yields
   \begin{equation*}
\begin{aligned}
\norm{\Delta_1 f}_{L^{2}}
\lesssim& \zeta\big(\norm{  u(t_n)}_{H^{1+\mu}}+\norm{ v(t_n)}_{H^{\mu}}\big) \big(\norm{  u(t_n)}^2_{H^{1+\frac{d}{4}}}+\norm{ v(t_n)}^2_{H^{\frac{d}{4}}}\\&+2\norm{  u(t_n)}_{H^{1+\frac{d}{4}}}\norm{ v(t_n)}_{H^{\frac{d}{4}}} + \norm{ u(t_n)} _{H^{2}}+  \norm{   v(t_n)} _{H^{1}}\big)
\lesssim\zeta.\end{aligned}
\end{equation*}

 $\bullet$ \textbf{Bound on $\norm{\Delta_2 f}_{L^{2}}$.}

By the similar argument as $\Delta_1 f$,  we decompose $\Delta_2 f$ into four terms:
   \begin{equation*}
\begin{aligned}
&\norm{\Delta_2 f}_{L^{2}}\\
=&\norm{f''\big( \alpha^{\intercal}(\zeta) U(t_n)\big)\big(\nabla \alpha^{\intercal}(\zeta) U(t_n) \big)^2-
f''\big( \alpha^{\intercal}(0) U(t_n)\big)\big(\nabla \alpha^{\intercal}(0) U(t_n) \big)^2}_{L^{2}}\\
\lesssim&\norm{f''\big( \alpha^{\intercal}(\zeta) U(t_n)\big)-f''\big( \alpha^{\intercal}(0) U(t_n)\big)}_{H^{\mu}}\norm{\big(\nabla \alpha^{\intercal}(\zeta) U(t_n) \big)^2}_{L^{2}}\\
&+\norm{f''\big( \alpha^{\intercal}(0) U(t_n)\big)}_{H^{\mu}}\norm{\big(\nabla \alpha^{\intercal}(\zeta) U(t_n) \big)^2-\big(\nabla \alpha^{\intercal}(0) U(t_n) \big)^2}_{L^{2}}.\end{aligned}
\end{equation*}
It is noted that this formula contains the same expression as   $\Delta_1 f$ and thus we only need to estimate the different two terms: $\norm{\big(\nabla \alpha^{\intercal}(\zeta) U(t_n) \big)^2}_{L^{2}}$ and $\norm{\big(\nabla \alpha^{\intercal}(\zeta) U(t_n) \big)^2-\big(\nabla \alpha^{\intercal}(0) U(t_n) \big)^2}_{L^{2}}$. For the first one, it follows from  the Sobolev embedding theorem
that
    \begin{equation*}
\begin{aligned}
&\norm{ (\nabla \alpha^{\intercal}(\zeta) U(t_n))^2}_{L^{2}}\\
 \lesssim&  \norm{\nabla \alpha^{\intercal}(\zeta) U(t_n)}^2_{L^{4}}
  \lesssim  \norm{ \nabla \alpha^{\intercal}(\zeta) U(t_n)}^2_{W^{1,p}}  \quad (W^{1,p}\hookrightarrow L^{4})\\
   \lesssim& \norm{ \nabla \alpha^{\intercal}(\zeta) U(t_n)}^2_{W^{\frac{d}{4},2}}\quad   (W^{\frac{d}{4},2}\hookrightarrow W^{1,p})\\
 \lesssim &\norm{ \nabla \alpha^{\intercal}(\zeta) U(t_n)}^2_{H^{\frac{d}{4}}}
 \lesssim  \norm{\cos(\zeta\sqrt{\mathcal{A}}) u(t_n)+  \sin( \zeta\sqrt{\mathcal{A}})\frac{v(t_n)}{\sqrt{\mathcal{A}}}}^2_{H^{1+\frac{d}{4}}}\\
  \lesssim&  \norm{  u(t_n)}^2_{H^{1+\frac{d}{4}}}+\norm{ v(t_n)}^2_{H^{\frac{d}{4}}}+2\norm{  u(t_n)}_{H^{1+\frac{d}{4}}}\norm{ v(t_n)}_{H^{\frac{d}{4}}}.
\end{aligned}
\end{equation*}
 For the second one, we use the Kato–Ponce inequality to get
     \begin{equation*}
\begin{aligned}
&\norm{\big(\nabla \alpha^{\intercal}(\zeta) U(t_n) \big)^2-\big(\nabla \alpha^{\intercal}(0) U(t_n) \big)^2}_{L^{2}}  \\
\lesssim& \norm{\nabla \alpha^{\intercal}(\zeta) U(t_n) +\nabla \alpha^{\intercal}(0) U(t_n) }_{H^{\mu}}\norm{\nabla \alpha^{\intercal}(\zeta) U(t_n) -\nabla \alpha^{\intercal}(0) U(t_n) }_{L^{2}}\\
\lesssim &\norm{\cos(\zeta\sqrt{\mathcal{A}}) u(t_n)+  \sin( \zeta\sqrt{\mathcal{A}})\frac{v(t_n)}{\sqrt{\mathcal{A}}}+u(t_n)} _{H^{1+\mu}}\\& \norm{(\cos(\zeta\sqrt{\mathcal{A}})-1) u(t_n)+\zeta  \sinc( \zeta\sqrt{\mathcal{A}}) v(t_n) } _{H^{1}}\\
\lesssim &\big(\norm{ u(t_n)} _{H^{1+\mu}}+\norm{ v(t_n)} _{H^{\mu}}\big)\\& \big(\norm{ -2\sin(\zeta\sqrt{\mathcal{A}}/2) \sinc(\zeta\sqrt{\mathcal{A}}/2) \zeta\sqrt{\mathcal{A}}  u(t_n)} _{H^{1}}+\norm{\zeta\sinc(\zeta\sqrt{\mathcal{A}}) v(t_n)} _{H^{1}}\big)\\
\lesssim & \zeta \big(\norm{ u(t_n)} _{H^{1+\mu}}+\norm{ v(t_n)} _{H^{\mu}}\big) \big(\norm{ u(t_n)} _{H^{2}}+ \zeta \norm{   v(t_n)} _{H^{1}}\big).
\end{aligned}
\end{equation*}
 To summarize, we have
$$
\norm{\Delta_2 f}_{L^{2}} \lesssim \zeta.$$

  $\bullet$ \textbf{Bound on $\norm{\Delta_3 f}_{L^{2}}$.}

The following  estimate can be proved in the same way as above
     \begin{equation*}
\begin{aligned}
 &\norm{\Delta_3 f}_{L^{2}}\\
 =& \norm{ \rho  f\big( \alpha^{\intercal}(\zeta) U(t_n)\big)-\rho  f\big( \alpha^{\intercal}(0) U(t_n)\big)}_{L^{2}}\\
   \lesssim& \norm{  f'\Big( \big(\alpha^{\intercal}(\zeta)  -\varsigma(\alpha^{\intercal}(0) -\alpha^{\intercal}(\zeta) )\big)U(t_n)\Big)
    (\alpha^{\intercal}(0) -\alpha^{\intercal}(\zeta) )U(t_n)  }_{L^{2}}\\
       \lesssim& \norm{  f'\Big( \big(\alpha^{\intercal}(\zeta)  -\varsigma(\alpha^{\intercal}(0) -\alpha^{\intercal}(\zeta) )\big)U(t_n)\Big)}_{H^{\mu}} \norm{
    (\alpha^{\intercal}(0) -\alpha^{\intercal}(\zeta) )U(t_n)}_{L^{2}}\\
 \lesssim &\norm{  \alpha^{\intercal}(\zeta) U(t_n)- \alpha^{\intercal}(0) U(t_n)}_{L^{2}}
  \lesssim \zeta\big(\norm{  u(t_n)}_{H^{1}}+\norm{ v(t_n)}_{L^{2}}\big),
  \end{aligned}
\end{equation*}
where $\varsigma\in[0,1]$.

   $\bullet$ \textbf{Bound on $\norm{\Delta_4 f}_{L^{2}}$.}

   For the last part, we analogously obtain
        \begin{equation*}
\begin{aligned}
 \norm{\Delta_4 f}_{L^{2}}=& \norm{ \rho  f'\big( \alpha^{\intercal}(0) U(t_n)\big)\alpha^{\intercal}(0) U(t_n)-\rho  f'\big( \alpha^{\intercal}(\zeta) U(t_n)\big)\alpha^{\intercal}(\zeta) U(t_n)}_{L^{2}}\\
\lesssim&\norm{f'\big( \alpha^{\intercal}(\zeta) U(t_n)\big)-f'\big( \alpha^{\intercal}(0) U(t_n)\big)}_{H^{\mu}}\norm{ \alpha^{\intercal}(\zeta) U(t_n)}_{L^{2}}\\
&+\norm{f'\big( \alpha^{\intercal}(0) U(t_n)\big)}_{H^{\mu}}\norm{\alpha^{\intercal}(\zeta) U(t_n) - \alpha^{\intercal}(0) U(t_n)}_{L^{2}}\\
\lesssim&\zeta\big(\norm{  u(t_n)}_{H^{1+\mu}}+\norm{ v(t_n)}_{H^{\mu}}\big)\big(\norm{  u(t_n)}_{H^{1}}+\zeta\norm{ v(t_n)}_{L^{2}}\big).
  \end{aligned}
\end{equation*}
 Consequently, based on the above results, we arrive at
 \begin{equation}\label{zeta- bound result}\max_{\zeta \in[0,h ]} \norm{\Upsilon(t_n,\zeta)-\Upsilon(t_n,0)}_{L^{2}}\lesssim\max_{\zeta \in[0,h ]}   \zeta \lesssim h.\end{equation}
 Combining this with \eqref{R1RR} immediately gives the statement \eqref{R1 bound result}  and  the proof of this lemma is complete.

\end{proof}

\begin{mylemma}\label{lamma2}
Suppose that the  conditions of Theorem   \ref{symplectic thm} hold. For
the remainders $R_2(t_n)$ and $R_3(t_n)$   respectively defined in \eqref{R2} and \eqref{R3}, they are bounded by
   \begin{equation}\label{R2 bound result}
\begin{aligned}\norm{R_2(t_n)}_{L^{2}}\lesssim h^4,\ \ \ \norm{R_3(t_n)}_{L^{2}}\lesssim h^4.
\end{aligned}
\end{equation}
\end{mylemma}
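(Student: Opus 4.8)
The plan is to treat $R_2$ and $R_3$ separately: $R_2$ is a near-verbatim repeat of the argument for $R_1$ in Lemma \ref{lamma1}, while $R_3$ reduces to two elementary ``freezing'' errors that are controlled by the time-regularity of the exact solution. For $R_2$, the first step is to simplify the operator product $\gamma^{\intercal}(2s-h)\beta(\zeta)$ that sits under the double integral in \eqref{R2}. Using the definitions in \eqref{notations w} together with the addition formula for $\cos$, a direct computation gives
\[
\gamma^{\intercal}(2s-h)\beta(\zeta)=\sin((2s-h)\sqrt{\mathcal{A}})\sin(\zeta\sqrt{\mathcal{A}})+\cos((2s-h)\sqrt{\mathcal{A}})\cos(\zeta\sqrt{\mathcal{A}})=\cos((2s-h-\zeta)\sqrt{\mathcal{A}}),
\]
so that the unbounded factor $\sqrt{\mathcal{A}}\sin$ inside $\gamma$ cancels exactly against the hidden $1/\sqrt{\mathcal{A}}$ inside $\beta$. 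This is the precise analogue of the identity $\alpha^{\intercal}(h-2s)\beta(\zeta)=\sin((h-2s+\zeta)\sqrt{\mathcal{A}})/\sqrt{\mathcal{A}}$ used in \eqref{R1RR}, except that here we obtain a bounded cosine rather than a smoothing sinc. Since $\cos(\cdot\sqrt{\mathcal{A}})$ has operator norm at most $1$ on $L^2$ by Proposition \ref{lamma00}, I would then bound $\norm{R_2(t_n)}_{L^2}\lesssim\int_0^h(h-s)\int_0^s\norm{\Upsilon(t_n,\zeta)-\Upsilon(t_n,0)}_{L^2}\,d\zeta\,ds$ and invoke the pointwise estimate $\norm{\Upsilon(t_n,\zeta)-\Upsilon(t_n,0)}_{L^2}\lesssim\zeta$ already established in \eqref{zeta- bound result}; the triple integration $\int_0^h(h-s)\int_0^s\zeta\,d\zeta\,ds\lesssim h^4$ closes this part with no new work.

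For $R_3$ I would write the integrand in \eqref{R3} as a sum of two freezing errors. Setting $A(s):=f'(\alpha^{\intercal}(s)U(t_n))$ and $B(s):=\int_0^s\frac{\sin((s-\theta)\sqrt{\mathcal{A}})}{\sqrt{\mathcal{A}}}f(u(t_n+\theta))\,d\theta$, and denoting by $B_0(s)$ the same integral with $f(u(t_n+\theta))$ replaced by $f(u(t_n))$, the bracket equals $A(s)B(s)-A(0)B_0(s)=[A(s)-A(0)]B(s)+A(0)[B(s)-B_0(s)]$. For the first term I would apply the Kato--Ponce estimate $\norm{fg}_{L^2}\lesssim\norm{f}_{H^{\mu}}\norm{g}_{L^2}$ with $\mu>d/2$ from Lemma \ref{lamma0}, giving $\norm{[A(s)-A(0)]B(s)}_{L^2}\lesssim\norm{A(s)-A(0)}_{H^{\mu}}\norm{B(s)}_{L^2}$; here $\norm{A(s)-A(0)}_{H^{\mu}}\lesssim s$ because $f'$ is Lipschitz and $\norm{\alpha^{\intercal}(s)U(t_n)-u(t_n)}_{H^{\mu}}\lesssim s$ (exactly the computation already carried out in the proof of Lemma \ref{lamma1}), while $\norm{B(s)}_{L^2}\lesssim\int_0^s(s-\theta)\,d\theta\lesssim s^2$ since $\sinc$ is bounded on $L^2$ and $f(u)$ stays bounded in $L^2$. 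For the second term, $\norm{A(0)[B(s)-B_0(s)]}_{L^2}\lesssim\norm{B(s)-B_0(s)}_{L^2}\lesssim\int_0^s(s-\theta)\norm{f(u(t_n+\theta))-f(u(t_n))}_{L^2}\,d\theta$, and the time increment of the exact solution satisfies $\norm{u(t_n+\theta)-u(t_n)}_{L^2}\lesssim\theta$ by Duhamel's formula \eqref{Duhamel} (using $|\cos x-1|\lesssim|x|$ together with the regularity in \eqref{rc exact}), so this term is $\lesssim s^3$. Integrating both contributions against the bounded operator $\cos((h-s)\sqrt{\mathcal{A}})$ over $s\in[0,h]$ yields $\int_0^h s^3\,ds\lesssim h^4$.

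The genuinely routine parts are the $L^2$-boundedness of the trigonometric operators and the geometric integration in $s$; the only place requiring care is the Sobolev bookkeeping in the $R_3$ splitting, namely ensuring each bilinear product is measured at a regularity level permitted by Lemma \ref{lamma0} and covered by \eqref{rc exact}. I expect the main (and fairly mild) obstacle to be verifying $\norm{A(s)-A(0)}_{H^{\mu}}\lesssim s$ at the level $\mu>d/2$ while keeping $B(s)$ only in $L^2$, which is exactly the configuration the last Kato--Ponce estimate of Lemma \ref{lamma0} is designed to handle. Because we only seek $L^2$ bounds, no gain of derivatives is required anywhere, so the regularity accounting stays comfortably inside the assumed spaces.
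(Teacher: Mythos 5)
Your proposal is correct and follows essentially the same route as the paper: the identity $\gamma^{\intercal}(2s-h)\beta(\zeta)=\cos((h-2s+\zeta)\sqrt{\mathcal{A}})$ reducing $R_2$ to the bound \eqref{zeta- bound result} on $\Upsilon(t_n,\zeta)-\Upsilon(t_n,0)$, and the two-term freezing decomposition of $R_3$ (which is exactly the paper's splitting \eqref{R3n}) handled by the Kato--Ponce estimate $\norm{fg}_{L^2}\lesssim\norm{f}_{H^{\mu}}\norm{g}_{L^2}$ together with the $O(s)$ increments of $\alpha^{\intercal}(s)U(t_n)$ and of the exact solution. The only differences are cosmetic (you keep $\zeta$ and $s-\theta$ inside the integrals where the paper takes maxima), so nothing further is needed.
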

\begin{proof}
Keeping \eqref{zeta- bound result} in mind, the following  estimate can be proved in the same way  as Lemma \ref{lamma1}
\begin{equation*}
\begin{aligned}
&\norm{R_2(t_n)}_{L^{2}}\\= &\norm{\int_0^{h  } \int_0^{s} (h -s)  \cos( (h-2s+\zeta)\sqrt{\mathcal{A}})  \big( \Upsilon(t_n,\zeta)-\Upsilon(t_n,0)\big)d\zeta ds}_{L^{2}} \\
\lesssim& \int_0^{h  } \int_0^{s} \abs{h -s} \norm{ \big( \Upsilon(t_n,\zeta)-\Upsilon(t_n,0)\big)}_{L^{2}}d\zeta ds \\
\lesssim&  h ^3  \max_{\zeta \in[0,h ]} \norm{ \Upsilon(t_n,\zeta)-\Upsilon(t_n,0) }_{L^{2}}\lesssim h^4.
\end{aligned}
\end{equation*}
Then we  represent the scheme of $R_3(t_n)$ as
   \begin{equation*}
\begin{aligned}
&R_3(t_n)\\
=&\int_0^{h  }\cos( (h -s)\sqrt{\mathcal{A}})f'\big(\alpha^{\intercal}(s)U(t_n)\big) \int_0^{s }  \frac{\sin( ( s-\theta)\sqrt{\mathcal{A}})}{\sqrt{\mathcal{A}}} f\big( u(t_n+\theta)\big)d\theta ds\\
&-\int_0^{h  }\cos( (h -s)\sqrt{\mathcal{A}})f'(u(t_n)) \int_0^{s }  \frac{\sin( ( s-\theta)\sqrt{\mathcal{A}})}{\sqrt{\mathcal{A}}}d\theta ds  f\big( u(t_n)\big)
\end{aligned}
\end{equation*}
and split it into
   \begin{equation}\label{R3n}
\begin{aligned}
R_3(t_n)
=&\int_0^{h  }\cos( (h -s)\sqrt{\mathcal{A}})\big(f'\big(\alpha^{\intercal}(s)U(t_n)\big)-f'(u(t_n))\big)\\& \int_0^{s }  \frac{\sin( ( s-\theta)\sqrt{\mathcal{A}})}{\sqrt{\mathcal{A}}} f\big( u(t_n+\theta)\big)d\theta ds+\int_0^{h  }\cos( (h -s)\sqrt{\mathcal{A}})f'(u(t_n)) \\
&\int_0^{s }  \frac{\sin( ( s-\theta)\sqrt{\mathcal{A}})}{\sqrt{\mathcal{A}}} \big(f\big( u(t_n+\theta)\big)-f\big( u(t_n)\big)\big)d\theta ds.
\end{aligned}
\end{equation}
Some derivations lead to
    \begin{equation*}
\begin{aligned}
&\norm{R_3(t_n)}_{L^{2}}\\
\lesssim &  \int_0^{h  }\int_0^{ s } \norm{ f'\big(\alpha^{\intercal}(s)U(t_n)\big)-f'(u(t_n))} _{H^{\mu}}  \abs{s-\theta}\norm{ f\big( u(t_n+\theta)\big)} _{L^{2}}d\theta ds \\
&+  \int_0^{h  }\int_0^{ s } \norm{ f'(u(t_n))} _{H^{\mu}}  \abs{s-\theta}\norm{ f\big( u(t_n+\theta)\big)-f\big( u(t_n)\big)} _{L^{2}}d\theta ds \\
\lesssim &  h ^3  \max_{s\in[0,h]} \norm{ f'\big(\alpha^{\intercal}(s)U(t_n)\big)-f'(u(t_n))} _{H^{\mu}}\\
&+h ^3  \max_{\theta\in[0,h]} \norm{f\big( u(t_n+\theta)\big)-f\big( u(t_n)\big)} _{L^{2}}\\
\lesssim &  h ^3  \max_{s\in[0,h]} \norm{ f''\big(\varsigma_1(\alpha^{\intercal}(s)U(t_n)-u(t_n))+u(t_n) \big)\big(\alpha^{\intercal}(s)U(t_n)-u(t_n)\big)} _{H^{\mu}}\\
&+h ^3  \max_{\theta\in[0,h]} \norm{ f'\big(\varsigma_2( u(t_n+\theta)-u(t_n))+u(t_n) \big)\big( u(t_n+\theta)-u(t_n)\big)}_{L^{2}}\\
\lesssim &  h ^3  \max_{s\in[0,h]} \norm{ f''\big(\varsigma_1(\alpha^{\intercal}(s)U(t_n)-u(t_n))+u(t_n) \big)} _{H^{\mu}}\norm{\alpha^{\intercal}(s)U(t_n)-u(t_n)} _{H^{\mu}}\\
&+h ^3  \max_{\theta\in[0,h]} \norm{ f'\big(\varsigma_2( u(t_n+\theta)-u(t_n))+u(t_n) \big)} _{H^{\mu}}\norm{ u(t_n+\theta)-u(t_n)}_{L^{2}}\\
\lesssim &  h ^3  \max_{s\in[0,h]} \norm{  \alpha^{\intercal}(s)U(t_n) -u(t_n)} _{H^{\mu}}+h ^3  \max_{\theta\in[0,h]} \norm{ u(t_n+\theta) -  u(t_n)} _{L^{2}}\\
 \lesssim&  h ^3  \max_{s\in[0,h]} s\big(\norm{  u(t_n)}_{H^{1+\mu}}+\norm{ v(t_n)}_{H^{\mu}}\big) + h ^3  \max_{\theta\in[0,h]} \theta  \norm{  v(t_n)}_{L^{2}}
  \lesssim  h ^4,
\end{aligned}
\end{equation*}
with $\varsigma_1, \varsigma_2\in[0,1]$.
The proof is complete.
\end{proof}

Sofar we have derived the bounds for the remainders which are dropped in the numerical scheme.
Based on them, we can present the error analysis (the proof of   Theorem \ref{symplectic thm}) in what follows.

\textbf{Proof of   Theorem \ref{symplectic thm}.}\begin{proof}
To estimate the error of the scheme \eqref{method}
$$e_n^{u}:=u(t_{n})-u_n,\quad e_n^{v}:=v(t_{n})-v_n,\quad    0\leq n\leq  T /h ,$$
we shall first consider the local truncation errors which are defined by inserting
the solution of  \eqref{klein-gordon} into \eqref{method}:
 \begin{equation}\label{local errors}
\begin{aligned}
\zeta_n^{u}:=&u(t_{n+1})-\cos(h \sqrt{\mathcal{A}}) u(t_n)-h  \sinc( h \sqrt{\mathcal{A}})  v(t_n)-h^2\Phi_1(h \sqrt{\mathcal{A}}) f( u(t_n))\\&-h^3  \Psi_1(h \sqrt{\mathcal{A}})f'( u(t_n))v(t_n) -h^4  \Psi_2(h \sqrt{\mathcal{A}})  \Big(f''(u(t_n))v(t_n)^2\\&-  f''(u(t_n))(\nabla u(t_n))^2+\rho  f(u(t_n))-\rho  f'(u(t_n))u(t_n)\Big),\\
\zeta_n^{v}:=&v(t_{n+1})+h \mathcal{A} \sinc( h \sqrt{\mathcal{A}}) u(t_n)-\cos(h \sqrt{\mathcal{A}}) v(t_n)-
h  \Phi_2(h \sqrt{\mathcal{A}})  f( u(t_n)) \\&-h^2\Phi_1(h \sqrt{\mathcal{A}})f'( u(t_n))v(t_n) -h^3  \Psi_1(h \sqrt{\mathcal{A}})  \Big(f''(u(t_n))(v(t_n)^2\\&-  (\nabla u(t_n))^2)+\rho  f(u(t_n))- f'(u(t_n))(\rho u(t_n)- f(u(t_n)))\Big),
\end{aligned}
\end{equation}
where $n=0,1,\ldots, T/h-1$.  For the coefficient functions appeared above, it follows from  \eqref{method-coefficient} that the  functions $\Phi_1(m),\Phi_2(m),m\Psi_1(m),m^2\Psi_2(m)$ are uniformly bounded for any
$m\in \mathbb{R}$ which immediately leads to
\begin{equation}\label{method-coefficient-bound}
\begin{aligned}
 &\norm{\Phi_1(h \sqrt{\mathcal{A}}) y}_{H^{\nu}}\lesssim \norm{y}_{H^{\nu}},\qquad \ \ \  \norm{\Phi_2(h \sqrt{\mathcal{A}}) y}_{H^{\nu}}\lesssim \norm{y}_{H^{\nu}},\\
   &\norm{h \sqrt{\mathcal{A}} \Psi_1(h \sqrt{\mathcal{A}}) y}_{H^{\nu}}\lesssim \norm{y}_{H^{\nu}},\ \ \norm{(h \sqrt{\mathcal{A}})^2\Psi_2(h \sqrt{\mathcal{A}}) y}_{H^{\nu}}\lesssim \norm{y}_{H^{\nu}},
\end{aligned}
\end{equation}
where we assume that $y\in H^{\nu}$ with any $\nu\geq 0$.

Subtracting the corresponding local error terms \eqref{local errors} from the scheme  \eqref{method}, we get the recurrence relation
for the errors
 \begin{equation}\label{equ errors}
\begin{aligned}
&e_{n+1}^{u}- \cos(h \sqrt{\mathcal{A}})e_n^{u}-h  \sinc( h \sqrt{\mathcal{A}})e_{n}^{v}=\zeta_n^{u}+h^2\eta_n^{u},\\&
e_{n+1}^{v}+ h \mathcal{A} \sinc( h \sqrt{\mathcal{A}})e_n^{u}-\cos(h \sqrt{\mathcal{A}})e_n^{v}=\zeta_n^{v}+h\eta_n^{v},\ n=0,1,\ldots, T/h-1,
\end{aligned}
\end{equation}
where   we denote
 \begin{equation}\label{sta errors}
\begin{aligned}
\eta_n^{u}:=&\Phi_1(h \sqrt{\mathcal{A}}) \big(f( u_n)-f( u(t_n))\big) +h   \Psi_1(h \sqrt{\mathcal{A}})\big(f'( u_n)v_n-f'( u(t_n))v(t_n)\big)\\& +h^2  \Psi_2(h \sqrt{\mathcal{A}})  \Big(\big(f''(u_n)v_n^2-f''(u(t_n))v(t_n)^2\big) \\&-  \big(f''(u_n)(\sqrt{\mathcal{A}}u_n)^2-f''(u(t_n))(\nabla u(t_n))^2\big)\\
&+\rho  \big(f(u_n)-f(u(t_n))\big)-\rho  \big(f'(u_n)u_n-f'(u(t_n))u(t_n)\big)\Big),\\
 \eta_n^{v}:=& \Phi_2(h \sqrt{\mathcal{A}})   \big(f( u_n)-f( u(t_n))\big) +h\Phi_1(h \sqrt{\mathcal{A}})\big(f'( u_n)v_n-f'( u(t_n))v(t_n)\big)\\&  +h^2  \Psi_1(h \sqrt{\mathcal{A}})  \Big(\big(f''(u_n)v_n^2-f''(u(t_n))v(t_n)^2\big) \\&-  \big(f''(u_n)(\sqrt{\mathcal{A}}u_n)^2-f''(u(t_n))(\nabla u(t_n))^2\big)\\
&+\rho  \big(f(u_n)-f(u(t_n))\big)-\rho  \big(f'(u_n)u_n-f'(u(t_n))u(t_n)\big) \\&-  \big( f'(u_n) f(u_n)- f'(u(t_n)) f(u(t_n))\big)\Big).
\end{aligned}
\end{equation}
The starting value of \eqref{equ errors} is  $e_0^{u}=e_0^{v}=0.$

In what follows, the proof is  divided into  three parts. The first one is about   the boundedness of numerical solution,
the second is devoted to local errors $\zeta_n^{u}, \zeta_n^{v}$ and stability $\eta_n^{u}, \eta_n^{v}$, and the last one concerns global errors $e_n^{u}, e_n^{v}$.

$\bullet$ In this part, we prove the boundedness of numerical solution:
\begin{equation}\label{bound num}
 \norm{u_{n} } _{H^{1+\max(\mu,1)}}+\norm{v_{n}}_{H^{\max(\mu,1)}}\lesssim 1,\ \ \ n=0,1,\ldots, T/h.
\end{equation}
For $n=0$,   (\ref{bound num})  is obviously true. Then we assume  that it holds  up to some $n=1,2,\ldots,m$, and we shall show that (\ref{bound num}) is true for $m+1$.
To this end, we reformulate
the scheme   \eqref{method}
 as
\begin{equation*}
\begin{aligned}
&\left(
  \begin{array}{c}
u_{m+1}\\
v_{m+1} \\
  \end{array}
\right)=\left(
  \begin{array}{cc}
         \cos(h \sqrt{\mathcal{A}})       & h  \sinc( h \sqrt{\mathcal{A}}) \\
    - h \mathcal{A} \sinc( h \sqrt{\mathcal{A}})  &  \cos(h \sqrt{\mathcal{A}}) \\
  \end{array}
\right) \left(
  \begin{array}{c}
   u_{m} \\
     v_{m} \\
  \end{array}
\right)
\\&+h \left(
          \begin{array}{c}
            h \Phi_1  f( u_m) +h^2 \Psi_1 f'( u_m)v_m +h^3  \Psi_2  F_1(u_m,v_m)\\
            \Phi_2   f( u_m)  +h\Phi_1 f'( u_m)v_m +h^2  \Psi_1  \big(F_1(u_m,v_m)+f'(u_m) f(u_m) \big)\\
          \end{array}
        \right).
\end{aligned}
\end{equation*}
Using the notation $\norm{\left(
  \begin{array}{c}
  w_1\\
   w_2 \\
  \end{array}
\right)}_{1+\max(\mu,1)}:=\sqrt{\norm{ w_1}_{H^{1+\max(\mu,1)}}^2+\norm{w_2}_{H^{\mu}}^2}$, it is derived that
  $$\norm{\left(
  \begin{array}{cc}
         \cos(h \sqrt{\mathcal{A}})       & h  \sinc( h \sqrt{\mathcal{A}}) \\
    - h \mathcal{A} \sinc( h \sqrt{\mathcal{A}})  &  \cos(h \sqrt{\mathcal{A}}) \\
  \end{array}
\right) \left(
  \begin{array}{c}
    u_m \\
  v_m \\
  \end{array}
\right) }_{1+\max(\mu,1)}\leq\norm{ \left(
  \begin{array}{c}
    u_m \\
  v_m \\
  \end{array}
\right) }_{1+\max(\mu,1)}.$$
According to the boundedness   (\ref{bound num}) up to  $m$, the expression \eqref{method-coefficient} of the coefficient functions $ \Psi_1, \Psi_2, \Phi_1, \Phi_2$  and the assumption on $f$ given in Theorem  \ref{symplectic thm}, it is easy to see that
\begin{equation*}
\begin{aligned}
&\norm{\left(
  \begin{array}{c}
u_{m+1}\\
v_{m+1} \\
  \end{array}
\right)}_{1+\max(\mu,1)}\leq \norm{\left(
  \begin{array}{c}
u_{m}\\
v_{m} \\
  \end{array}
\right)}_{1+\max(\mu,1)}
+   Ch .
\end{aligned}
\end{equation*}
Applying the Gronwall inequality  immediately yields  (\ref{bound num}) for $n=m+1$.

$\bullet$ By the bounds \eqref{ II bound}, \eqref{ III bound}-\eqref{ IIIv bound} and Lemmas \ref{lamma1}--\ref{lamma2},
the local errors $\zeta_n^{u}, \zeta_n^{v}$ are estimated as
\begin{equation*}
\begin{aligned}&\norm{\zeta_n^{u}}_{H^1}\lesssim  \norm{R_1(t_n)}_{H^1} +\norm{\textmd{Part II}^{u}}_{H^{1}}+\norm{\textmd{Part III}^{u}}_{H^{1}} \lesssim  h ^4,\\ &\norm{\zeta_n^{v}}_{L^2}\lesssim  \norm{R_2(t_n)}_{L^2} +\norm{R_3(t_n)}_{L^2}+\norm{\textmd{Part III}^{v}}_{L^2} \lesssim  h ^4.\end{aligned}
\end{equation*}
 To establish the estimate for  the stability  terms $\eta_n^{u}$ and $\eta_n^{v}$ defined in \eqref{sta errors},
we  consider the   terms of \eqref{sta errors} separately. For the first and last terms, it is easy to see that
 \begin{equation*}
\begin{aligned}
&h^2  \norm{\Psi_2(h \sqrt{\mathcal{A}}) \Big(\rho  \big(f(u_n)-f(u(t_n))\big)-\rho  \big(f'(u_n)u_n-f'(u(t_n))u(t_n)\big)\Big)}_{H^1}\\
& +\norm{\Phi_1(h \sqrt{\mathcal{A}}) \big(f( u_n)-f( u(t_n))\big)}_{H^1}\lesssim \norm{e_n^{u}}_{H^1},
\end{aligned}
\end{equation*}
 Based on the  Lipschitz continuity conditions, Kato–Ponce inequalities and \eqref{method-coefficient-bound},
it is derived that
 \begin{equation*}
\begin{aligned}
&\norm{h   \Psi_1(h \sqrt{\mathcal{A}})\big(f'( u_n)v_n-f'( u(t_n))v(t_n)\big)}_{H^1}\\
=&
\norm{h \sqrt{\mathcal{A}}  \Psi_1(h \sqrt{\mathcal{A}}) \frac{f'( u_n)v_n-f'( u(t_n))v(t_n)}{\sqrt{\mathcal{A}}} }_{H^1}
\\\lesssim&\norm{ f'( u_n)v_n-f'( u(t_n))v(t_n) }_{L^2}\\
\lesssim&\norm{f'( u_n)}_{H^{\mu}}\norm{e_n^{v}}_{L^2}+
\norm{f'( u_n) -f'( u(t_n)) }_{L^2}\norm{v(t_n)}_{H^{\mu}}
\\\lesssim& \norm{e_n^{v}}_{L^2}+ \norm{e_n^{u}}_{L^2},
\end{aligned}
\end{equation*}
and
 \begin{equation*}
\begin{aligned}
&\norm{h^2  \Psi_2(h \sqrt{\mathcal{A}})  \big(f''(u_n)v_n^2-f''(u(t_n))v(t_n)^2\big)}_{H^1}\\
=&h
\norm{h \sqrt{\mathcal{A}}  \Psi_2(h \sqrt{\mathcal{A}}) \frac{f''(u_n)v_n^2-f''(u(t_n))v(t_n)^2}{\sqrt{\mathcal{A}}} }_{H^1}\\
\lesssim&h\norm{f''(u_n)v_n^2-f''(u(t_n))v(t_n)^2}_{L^2}
\lesssim h(\norm{e_n^{v}}_{L^2}+ \norm{e_n^{u}}_{L^2}).
\end{aligned}
\end{equation*}
With the same arguments, we get
 \begin{equation*}
\begin{aligned}
&\norm{h^2  \Psi_2(h \sqrt{\mathcal{A}})  \big(f''(u_n)(\nabla u_n)^2-f''(u(t_n))(\nabla u(t_n))^2\big)}_{H^1}\\
=&
\norm{h^2  \mathcal{A}   \Psi_2(h \sqrt{\mathcal{A}}) \frac{f''(u_n)(\nabla u_n)^2-f''(u(t_n))(\nabla u(t_n))^2}{ \mathcal{A}} }_{H^1}\\
\lesssim  &
\norm{ \frac{f''(u_n)(\nabla u_n)^2-f''(u(t_n))(\nabla u(t_n))^2}{ \mathcal{A}} }_{H^1}\\
\lesssim  &
\norm{ \frac{\big(f''(u_n)-f''(u(t_n))\big)(\nabla u_n)^2}{ \sqrt{\mathcal{A}}} }_{L^2} +\norm{ \frac{f''(u(t_n))\big((\nabla u_n)^2-(\nabla u(t_n))^2\big)}{\sqrt{ \mathcal{A}}} }_{L^2}.
\end{aligned}
\end{equation*}
According to the results of Lemma \ref{lamma0} and the boundedness \eqref{bound num}, we find
 \begin{equation*}
\begin{aligned}
&
\norm{ \frac{\big(f''(u_n)-f''(u(t_n))\big)(\nabla u_n)^2}{ \sqrt{\mathcal{A}}} }_{L^2}
\lesssim   \norm{f''(u_n)-f''(u(t_n))}_{L^2}\norm{u_n\nabla u_n}_{H^{\mu}} \\
\lesssim &
\norm{u^2_n}_{H^{1+\mu}}\norm{f''(u_n)-f''(u(t_n))}_{L^2} \lesssim  \norm{e_n^{u}}_{H^1},
\end{aligned}
\end{equation*}
and
 \begin{equation*}
\begin{aligned}
&
\norm{ \frac{f''(u(t_n))\big((\nabla u_n)^2-(\nabla u(t_n))^2\big)}{\sqrt{ \mathcal{A}}} }_{L^2}
\lesssim   \norm{f''(u(t_n)}_{H^{\mu}}\norm{\nabla u^2_n - \nabla u^2(t_n) }_{L^2}\\
\lesssim &
\norm{  u^2_n - u^2(t_n) }_{H^1}\norm{f''(u(t_n)}_{H^{\mu}}
\lesssim  \norm{  u_n + u(t_n) }_{H^{\max(\mu,1)}} \norm{e_n^{u}}_{H^1} \lesssim \norm{e_n^{u}}_{H^1}.
\end{aligned}
\end{equation*}
Here the boundedness \eqref{bound num} of numerical solution is used in the derivation.
Therefore, it is obtained that
\begin{equation*}
\begin{aligned}
&\norm{h^2  \Psi_2(h \sqrt{\mathcal{A}})  \big(f''(u_n)(\nabla u_n)^2-f''(u(t_n))(\nabla u(t_n))^2\big)}_{H^1} \lesssim  \norm{e_n^{u}}_{H^1}.
\end{aligned}
\end{equation*}

Combining these results with the first formula of \eqref{sta errors} gives
$$
\norm{\eta_n^{u}}_{H^1}
\lesssim  \norm{e_n^{u}}_{H^1}+\norm{e_n^{v}}_{L^2}.
$$
In a similar way,  we can get the same result for $\eta_n^{v}$:$$
\norm{\eta_n^{v}}_{L^2}
\lesssim  \norm{e_n^{u}}_{H^1}+\norm{e_n^{v}}_{L^2}.$$

$\bullet$
 Now we turn back to the recurrence relation \eqref{equ errors} which is rewritten in the form:
\begin{equation*}
\begin{aligned}
&\left(
  \begin{array}{c}
  e_{n+1}^{u}\\
   e_{n+1}^{v} \\
  \end{array}
\right)=\left(
  \begin{array}{cc}
         \cos(h \sqrt{\mathcal{A}})       & h  \sinc( h \sqrt{\mathcal{A}}) \\
    - h \mathcal{A} \sinc( h \sqrt{\mathcal{A}})  &  \cos(h \sqrt{\mathcal{A}}) \\
  \end{array}
\right) \left(
  \begin{array}{c}
    e_{n}^u \\
     e_{n}^v \\
  \end{array}
\right)
+ \left(
          \begin{array}{c}
            \zeta_n^{u}+h^2\eta_n^{u}\\
            \zeta_n^{v}+h\eta_n^{v}\\
          \end{array}
        \right).
\end{aligned}
\end{equation*}
It is bounded by
\begin{equation*}
\begin{aligned}
&\norm{\left(
  \begin{array}{c}
  e_{n+1}^{u}\\
   e_{n+1}^{v} \\
  \end{array}
\right)}_1\leq \norm{\left(
  \begin{array}{c}
    e_{n}^u \\
     e_{n}^v \\
  \end{array}
\right)}_1
+ \norm{\left(
          \begin{array}{c}
            \zeta_n^{u}+h^2\eta_n^{u}\\
            \zeta_n^{v}+h\eta_n^{v}\\
          \end{array}
        \right)}_1\\
        \leq& \norm{\left(
  \begin{array}{c}
    e_{n}^u \\
     e_{n}^v \\
  \end{array}
\right)}_1
+  \sqrt{(\norm{\zeta_n^{u}}_{H^1}+h^2\norm{\eta_n^{u}}_{H^1})^2+
   (\norm{\zeta_n^{v}}_{L^2}+h^2\norm{\eta_n^{v}}_{L^2})^2}\\
           \leq& \norm{\left(
  \begin{array}{c}
    e_{n}^u \\
     e_{n}^v \\
  \end{array}
\right)}_1
+  C\sqrt{h^8+h^6( \norm{e_n^{u}}_{H^1}+\norm{e_n^{v}}_{L^2})+h^4(\norm{e_n^{u}}^2_{H^1}+\norm{e_n^{v}}^2_{L^2})}\\
           \leq& \norm{\left(
  \begin{array}{c}
    e_{n}^u \\
     e_{n}^v \\
  \end{array}
\right)}_1
+  C h^4+Ch^3 \sqrt{\norm{e_n^{u}}_{H^1}+\norm{e_n^{v}}_{L^2}}+Ch^2 \sqrt{\norm{e_n^{u}}^2_{H^1}+\norm{e_n^{v}}^2_{L^2}}.
\end{aligned}
\end{equation*}
Applying the Gronwall inequality  yields
$
\sqrt{\norm{e_{n+1}^{u}}^2_{H^1}+\norm{e_{n+1}^{v}}^2_{L^2}}\leq C h^3.
$
This shows \eqref{error bound} exactly.  Therefore,  the theorem is
confirmed.
\end{proof}

\subsection{Convergence of fully-discrete scheme}

\begin{mytheo}  \label{fully-discrete thm}  Under the conditions of Theorem \ref{symplectic thm} and the  regularity condition $$(u(0,x), \partial_tu(0,x))\in [H^{1+\tilde{\mu}}(\mathbb{T}^d)\bigcap H^{\tilde{\mu}}_0(\mathbb{T}^d)]\times H^{\tilde{\mu}}(\mathbb{T}^d)$$
with $\tilde{\mu}=\max(\mu,1)$ and $\mu> \frac{d}{2}$,
the numerical solution produced by the fully-discrete scheme  \eqref{method-f} has the following error bound:
 \begin{equation}\label{error bound new}
\begin{aligned}
 \norm{\Pi_{N_x}U(t_{n+1})-U_{n+1}}_{H^{1}}+ \norm{\Pi_{N_x}V(t_{n+1})-V_{n+1}}_{L^{2}}  \leq C\big( h ^3+ N_x^{-1-\tilde{\mu}}\big),
\end{aligned}
\end{equation}
where $0\leq n \leq \frac{T}{h}-1$, and $C$ is the error constant which only depends on $T$ and $C_0$ given in Theorem \ref{symplectic thm}.
\end{mytheo}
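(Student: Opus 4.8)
The plan is to reduce the fully-discrete estimate to the already-proven semi-discrete bound of Theorem~\ref{symplectic thm} plus a purely spatial interpolation (aliasing) contribution. Let $u_n,v_n$ denote the semi-discrete iterates of \eqref{method} and set $\tilde e_n^U:=\Pi_{N_x}u_n-U_n$, $\tilde e_n^V:=\Pi_{N_x}v_n-V_n$. Since $\Pi_{N_x}$ is an $H^\nu$-bounded orthogonal projection, the triangle inequality gives $\norm{\Pi_{N_x}u(t_n)-U_n}_{H^1}+\norm{\Pi_{N_x}v(t_n)-V_n}_{L^2}\le\norm{u(t_n)-u_n}_{H^1}+\norm{v(t_n)-v_n}_{L^2}+\norm{\tilde e_n^U}_{H^1}+\norm{\tilde e_n^V}_{L^2}$, where the first two terms are $\lesssim h^3$ by Theorem~\ref{symplectic thm}. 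It therefore suffices to control $(\tilde e_n^U,\tilde e_n^V)$, which vanishes at $n=0$ because $U_0=\Pi_{N_x}u_0$, $V_0=\Pi_{N_x}v_0$. Applying $\Pi_{N_x}$ to \eqref{method} and using that every coefficient function in \eqref{method-coefficient} is a Fourier multiplier commuting with $\Pi_{N_x}$, subtracting \eqref{method-f} yields a recursion of exactly the semi-discrete form
\begin{equation*}
\begin{pmatrix}\tilde e_{n+1}^U\\ \tilde e_{n+1}^V\end{pmatrix}=\begin{pmatrix}\cos(h\sqrt{\mathcal A}) & h\,\sinc(h\sqrt{\mathcal A})\\ -h\mathcal A\,\sinc(h\sqrt{\mathcal A}) & \cos(h\sqrt{\mathcal A})\end{pmatrix}\begin{pmatrix}\tilde e_{n}^U\\ \tilde e_{n}^V\end{pmatrix}+\begin{pmatrix}S_n^U\\ S_n^V\end{pmatrix},
\end{equation*}
where $S_n^U,S_n^V$ collect the differences $\Pi_{N_x}g(u_n,v_n)-I_{N_x}g(U_n,V_n)$ with $g\in\{f,\,f'v,\,F_1,\,f'f\}$, weighted by the coefficient functions of \eqref{method-f}. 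On the Fourier-sine space the Dirichlet condition removes the zero mode, so $\sqrt{\mathcal A}$ is bounded below and $\sqrt{\mathcal A}^{-1}$ behaves as in Proposition~\ref{lamma00}; consequently the propagator matrix is bounded on the energy norm $\norm{(w_1,w_2)}_1=(\norm{w_1}_{H^1}^2+\norm{w_2}_{L^2}^2)^{1/2}$, exactly as in the semi-discrete proof, and the global error is governed by the accumulated size of the sources.

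Before estimating the sources I would establish the high-norm boundedness of the fully-discrete iterates, namely $\norm{U_n}_{H^{1+\tilde\mu}}+\norm{V_n}_{H^{\tilde\mu}}\lesssim 1$ for $0\le n\le T/h$, by the same induction-plus-Gronwall argument used for \eqref{bound num}. The only new ingredient is the stability of the interpolation operator, $\norm{I_{N_x}w}_{H^s}\lesssim\norm{w}_{H^s}$ for $s>d/2$, which together with the Kato--Ponce estimates of Lemma~\ref{lamma0} and the bounds $\abs{f^{(k)}}\le C_0$ controls each interpolated nonlinearity $f(U_n)$, $f'(U_n)V_n$, $F_1(U_n,V_n)$, $f'(U_n)f(U_n)$ in the relevant Sobolev norm; the smoothing encoded in \eqref{method-coefficient-bound} makes the $h^3$- and $h^4$-terms harmless. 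This boundedness is indispensable because the aliasing estimates below require the arguments of $f$ to lie in $H^{1+\tilde\mu}$.

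For each source I split $\Pi_{N_x}g(u_n,v_n)-I_{N_x}g(U_n,V_n)$ into an \emph{aliasing} part $(\Pi_{N_x}-I_{N_x})g(u_n,v_n)$ and a \emph{stability} part $I_{N_x}\bigl(g(u_n,v_n)-g(U_n,V_n)\bigr)$. For the aliasing part I invoke the standard trigonometric-interpolation bound $\norm{(\Pi_{N_x}-I_{N_x})w}_{H^r}\lesssim N_x^{\,r-s}\norm{w}_{H^s}$ for $s>d/2$, $0\le r\le s$, applied with $w=g(u_n,v_n)\in H^{1+\tilde\mu}$ (guaranteed by the boundedness of $u_n,v_n$ and Lemma~\ref{lamma0}). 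The stability part is controlled by the Lipschitz bounds on $f^{(k)}$ and Kato--Ponce, giving, after writing $u_n-U_n=(u_n-\Pi_{N_x}u_n)+\tilde e_n^U$ and using $\norm{u_n-\Pi_{N_x}u_n}_{H^r}\lesssim N_x^{\,r-(1+\tilde\mu)}$, a contribution $\lesssim N_x^{\,r-(1+\tilde\mu)}+\norm{\tilde e_n^U}_{H^1}+\norm{\tilde e_n^V}_{L^2}$. The decisive point is the choice of $r$: the lowest-order-in-$h$ nonlinearity, the $h\,\Phi_2$-term of the $v$-equation, is measured in $L^2$ (that is $r=0$), which converts the full $1+\tilde\mu$ regularity of $f$ into the sharp spatial order $N_x^{-(1+\tilde\mu)}$; summed against the bounded propagator over the $T/h$ steps this produces the $N_x^{-1-\tilde\mu}$ in \eqref{error bound new}.

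Finally, collecting these bounds and taking the energy norm gives a per-step inequality of the form $\norm{(\tilde e_{n+1}^U,\tilde e_{n+1}^V)}_1\le(1+Ch)\norm{(\tilde e_n^U,\tilde e_n^V)}_1+C\bigl(h^4+h\,N_x^{-1-\tilde\mu}\bigr)$, and the discrete Gronwall inequality then yields $\norm{(\tilde e_n^U,\tilde e_n^V)}_1\lesssim h^3+N_x^{-1-\tilde\mu}$, which combined with the semi-discrete bound proves \eqref{error bound new}. I expect the main obstacle to be precisely the interplay of the second and third steps: one must first propagate the $H^{1+\tilde\mu}\times H^{\tilde\mu}$ boundedness to the \emph{interpolated} iterates (without it the aliasing estimates are vacuous), and then carry out the norm-by-norm bookkeeping so that every nonlinear term is measured in exactly the Sobolev norm that extracts its full spatial order. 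The delicate point is to verify that the remaining nonlinear contributions—especially the $H^1$-aliasing of $f(U_n)$ in the $u$-equation, which on its own only supplies $N_x^{-\tilde\mu}$—enter with enough extra powers of $h$ (and, through \eqref{method-coefficient-bound}, with the smoothing multipliers $h\sqrt{\mathcal A}\Psi_1$ and $(h\sqrt{\mathcal A})^2\Psi_2$) to remain subordinate to the binding $L^2$-term, so that the sharp rate $N_x^{-1-\tilde\mu}$ rather than a weaker power of $N_x$ survives the summation.
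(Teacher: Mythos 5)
Your reduction is a genuinely different route from the paper's: you compare $U_n$ with the projected \emph{semi-discrete} iterate $\Pi_{N_x}u_n$ and let Theorem \ref{symplectic thm} absorb the entire $h^3$ contribution, whereas the paper compares $U_n$ directly with $\Pi_{N_x}U(t_n)$, so that its error recursion \eqref{d-ff1}--\eqref{d-ff2} carries the temporal remainders ($R_1,R_2,R_3$, $\mathrm{II}^{u}$, $\mathrm{III}^{u}$, $\mathrm{III}^{v}$) and the aliasing terms at the same time. Your aliasing/stability splitting, the boundedness induction for $(U_n,V_n)$, and the Gronwall closure all correspond to steps the paper actually performs, so in this respect the skeleton is sound.

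However, the point you explicitly leave open is the crux of the theorem, and the mechanism you propose for it fails quantitatively. Consider the source $h^2\Phi_1(h\sqrt{\mathcal{A}})\bigl[(\Pi_{N_x}-I_{N_x})f(u_n)\bigr]$ in the $u$-equation, measured in $H^1$ (the same issue arises for $h^2\Phi_1\bigl[(\Pi_{N_x}-I_{N_x})(f'(u_n)v_n)\bigr]$ in $L^2$ and for the $\Psi_1$-, $\Psi_2$-weighted sources, since $f'(u_n)v_n$ and $F_1(u_n,v_n)$ only lie in $H^{\tilde{\mu}}$ or below). Using only the boundedness of $\Phi_1$ recorded in \eqref{method-coefficient-bound}, this source is of size $h^2N_x^{-\tilde{\mu}}$ per step, hence $hN_x^{-\tilde{\mu}}$ after summing over the $T/h$ steps; and the inequality $hN_x^{-\tilde{\mu}}\lesssim h^3+N_x^{-1-\tilde{\mu}}$ is \emph{false} whenever $\tilde{\mu}<2$ (take $N_x^{-1}=h^{3/2}$ with $\tilde{\mu}=1$: the left-hand side is $h^{5/2}$, the right-hand side is of order $h^3$). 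Since $\tilde{\mu}=\max(\mu,1)$ with $\mu>\frac d2$, the regime $\tilde{\mu}<2$ is exactly the low-regularity regime targeted in all dimensions $d=1,2,3$, so ``extra powers of $h$'' cannot rescue the argument; moreover, even the smoothing multipliers you cite from \eqref{method-coefficient-bound} (boundedness of $h\sqrt{\mathcal{A}}\Psi_1$ and $(h\sqrt{\mathcal{A}})^2\Psi_2$) trade the powers of $h$ the wrong way and lead to the same shortfall. What closes the estimate --- and what the paper's bounds for $\widetilde{R^U_1},\widetilde{R^U_2},\widetilde{R^U_3}$ actually use --- is the factorization that keeps \emph{one} power of $h$ and converts everything else into inverse derivatives: $h^2\Phi_1(h\sqrt{\mathcal{A}})=\tfrac{h}{2}\,\sqrt{\mathcal{A}}^{-1}\sin(h\sqrt{\mathcal{A}})$, $h^3\Psi_1(h\sqrt{\mathcal{A}})=h\,\mathcal{A}^{-1}\,\tfrac{\sinc(h\sqrt{\mathcal{A}})-\cos(h\sqrt{\mathcal{A}})}{2}$, and $h^4\Psi_2(h\sqrt{\mathcal{A}})=h\,\sqrt{\mathcal{A}}^{-3}\cdot(\text{bounded multiplier})$. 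By Proposition \ref{lamma00} these gains of one, two, or three derivatives let each aliasing error be measured in $L^2$ (or lower) norms, where the full spatial rate is available, e.g.\ $\norm{h^2\Phi_1(\Pi_{N_x}-I_{N_x})f(u_n)}_{H^1}\lesssim h\norm{(\Pi_{N_x}-I_{N_x})f(u_n)}_{L^2}\lesssim hN_x^{-1-\tilde{\mu}}\norm{f(u_n)}_{H^{1+\tilde{\mu}}}$, which sums to $N_x^{-1-\tilde{\mu}}$. With this ingredient inserted at every nonlinear source, your recursion and Gronwall step do yield \eqref{error bound new}; without it, your proposal proves at best the rate $N_x^{-\tilde{\mu}}$.
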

\begin{remark}
From this result,  it follows that by passing to the limit $N_x $,  the convergence of   semi-discretization given in Theorem
\ref{symplectic thm} is obtained. In practical applications, a large $N_x $ can be chosen and then the main error of fully-discrete scheme
comes from the time  discretization.
\end{remark}

\begin{proof}
Denote  the errors of the fully-discrete solution \eqref{method-f} by $$E_n^U=\Pi_{N_x}U(t_n)-U_n,\ \ \ E_n^V=\Pi_{N_x}V(t_n)-V_n.$$
By the construction of the  semi-discrete scheme presented in Section \ref{sds-construct}, it is obtained that
 the exact solution satisfies
 \begin{equation}\label{exact-ff}
\begin{aligned}
\Pi_{N_x}U(t_{n+1})=&\cos(h \sqrt{\mathcal{A}}) \Pi_{N_x}U(t_n)+h  \sinc( h \sqrt{\mathcal{A}})  \Pi_{N_x}V(t_n)\\
&+h^2\Phi_1(h \sqrt{\mathcal{A}})\Pi_{N_x} f( U(t_n))+h^3  \Psi_1(h \sqrt{\mathcal{A}})\Pi_{N_x} \big(f'( U(t_n))V_n\big)\\& +h^4  \Psi_2(h \sqrt{\mathcal{A}}) \Pi_{N_x} \Big(f''(U(t_n))V(t_n)^2-  f''(U(t_n))(\nabla U_n)^2\\&+\rho  f(U(t_n))-\rho  f'(U(t_n))U(t_n)\Big)\\
&+\Pi_{N_x} \Big(R_1(t_n)+\textmd{Part II}^{u}+\textmd{Part III}^{u}\Big),\\
\Pi_{N_x}V(t_{n+1})
   =&-h \mathcal{A} \sinc( h \sqrt{\mathcal{A}}) \Pi_{N_x}U(t_n)+\cos(h \sqrt{\mathcal{A}}) \Pi_{N_x}V(t_n)\\&+
h  \Phi_2(h \sqrt{\mathcal{A}}) \Pi_{N_x} f( U(t_n)) +h^2\Phi_1(h \sqrt{\mathcal{A}})\Pi_{N_x}\big(f'(U(t_n))V(t_n)\big)\\& +h^3  \Psi_1(h \sqrt{\mathcal{A}})  \Pi_{N_x}\Big(f''(U(t_n))(V(t_n)^2-  (\nabla U(t_n))^2)\\&+\rho  f(U(t_n))- f'(U(t_n))(\rho U(t_n)- f(U(t_n)))\Big)\\&+\Pi_{N_x} \Big(R_2(t_n)+R_3(t_n)+\textmd{Part III}^{v}\Big).
\end{aligned}
\end{equation}
Then considering the difference between  \eqref{method-f} and \eqref{exact-ff},  the following error equation is obtained:
 \begin{equation}\label{d-ff1}
\begin{aligned}
E_{n+1}^U =& \cos(h \sqrt{\mathcal{A}})E_n^U+h  \sinc( h \sqrt{\mathcal{A}}) E_n^V\\
&+h^2\Phi_1(h \sqrt{\mathcal{A}})\Pi_{N_x} \big(f( U(t_n))-f( U_n)\big)\\
&+h^3  \Psi_1(h \sqrt{\mathcal{A}})\Pi_{N_x} \big(f'(U(t_n))V(t_n)-f'( U_n)V_n\big)\\
&+h^4  \Psi_2(h \sqrt{\mathcal{A}}) \Pi_{N_x}
 \Big(f''(U(t_n))V(t_n)^2-f''(U_n)V_n^2\\
&+f''(U_n)(\nabla U_n)^2-  f''(U(t_n))
 (\nabla U_n)^2+\rho  f(U(t_n))\\
&-\rho  f(U_n)+\rho  f'(U_n)U_n-\rho  f'(U(t_n))U(t_n)\Big)\\
&+\Pi_{N_x} \Big(R_1(t_n)+\textmd{Part II}^{u}+\textmd{Part III}^{u}\Big)\\
&+\widetilde{R^U_1}(t_n)+\widetilde{R^U_2}(t_n)+\widetilde{R^U_3}(t_n),
\end{aligned}
\end{equation}
and
 \begin{equation}\label{d-ff2}
\begin{aligned}
E_{n+1}^V
   =&-h \mathcal{A} \sinc( h \sqrt{\mathcal{A}}) E_n^U+\cos(h \sqrt{\mathcal{A}}) E_n^V\\
&+
h  \Phi_2(h \sqrt{\mathcal{A}})  \quad \Pi_{N_x} \big(f( U(t_n))-f( U_n)\big)\\
&+h^2\Phi_1(h \sqrt{\mathcal{A}}) \Pi_{N_x}\big(f'(U(t_n))V(t_n)-f'( U_n)V_n\big)  \\&+h^3  \Psi_1(h \sqrt{\mathcal{A}})  \Pi_{N_x}
\Big(f''(U(t_n))V(t_n)^2-f''(U_n)V_n^2\\
&+f''(U_n)(\nabla u_n)^2 -  f''(U(t_n))(\nabla U_n)^2+\rho  f(U(t_n))\\
&-\rho  f(U_n)+\rho  f'(U_n)U_n-\rho  f'(U(t_n))U(t_n)\\
&+f'( U(t_n))f( U(t_n))-f'(U_n)f(U_n)\Big)\\
&+\Pi_{N_x} \Big(R_2(t_n)+R_3(t_n)+\textmd{Part III}^{v}\Big)\\&+\widetilde{R^V_1}(t_n)+\widetilde{R^V_2}(t_n)+\widetilde{R^V_3}(t_n),
\end{aligned}
\end{equation}
where we introduce the following notations to denote remainders
 \begin{equation}\label{nd-ff}
\begin{aligned}
 \widetilde{R^U_1}(t_n) =&h^2\Phi_1(h \sqrt{\mathcal{A}})(\Pi_{N_x}-I_{N_x}) f( U_n),\\
  \widetilde{R^U_2}(t_n) =&h^3  \Psi_1(h \sqrt{\mathcal{A}})(\Pi_{N_x}-I_{N_x})\big(f'( U_n)V_n\big),\\
   \widetilde{R^U_3}(t_n) =&h^4  \Psi_2(h \sqrt{\mathcal{A}})(\Pi_{N_x}-I_{N_x})\Big(f''(U_n)V_n^2-  f''(U_n)(\nabla U_n)^2\\&\qquad\qquad\qquad\qquad\qquad\quad+\rho  f(U_n)-\rho  f'(U_n)U_n\Big),
\end{aligned}
\end{equation}
and
 \begin{equation}\label{nd2-ff}
\begin{aligned}
 \widetilde{R^V_1}(t_n) =&h  \Phi_2(h \sqrt{\mathcal{A}})(\Pi_{N_x}-I_{N_x}) f( U_n),\\
  \widetilde{R^V_2}(t_n) =&h^2\Phi_1(h \sqrt{\mathcal{A}})(\Pi_{N_x}-I_{N_x})\big(f'( U_n)V_n\big),\\
   \widetilde{R^V_3}(t_n) =&h^3  \Psi_1(h \sqrt{\mathcal{A}})(\Pi_{N_x}-I_{N_x}) \Big(f''(U_n)\big(V_n^2-  (\nabla U_n)^2\big) \\&\qquad\qquad\qquad\qquad\qquad\quad+\rho  f(U_n)-  f'(U_n)\big(\rho U_n-f(U_n)\big)\Big).
\end{aligned}
\end{equation}
With the same analysis as  the bounds \eqref{ II bound}, \eqref{ III bound}-\eqref{ IIIv bound} and Lemmas \ref{lamma1}--\ref{lamma2}, we have
$$\begin{aligned}&\norm{\Pi_{N_x} \Big(R_2(t_n)+R_3(t_n)+\textmd{Part III}^{v}\Big)}_{L^2}\\&+\norm{\Pi_{N_x} \Big(R_1(t_n)+\textmd{Part II}^{u}+\textmd{Part III}^{u}\Big)}_{H^1} \lesssim h ^4.\end{aligned}$$
In what follows, we derive the bounds for the terms \eqref{nd-ff}-\eqref{nd2-ff}   by using mathematical induction on $n$:  assuming that
 \begin{equation}\label{ass-ff}
\begin{aligned}
\norm{U_n}_{H^{1+\tilde{\mu}}}\leq \norm{\Pi_{N_x} U(t_n)}_{H^{1+\tilde{\mu}}}+1,\ \ \ \norm{V_n}_{H^{ \alpha}}\leq \norm{\Pi_{N_x} V(t_n)}_{H^{\tilde{\mu}}}+1,
\end{aligned}
\end{equation}
we shall prove the following results:
 \begin{equation}\label{res-ff}
\begin{aligned}
\norm{U_{n+1}}_{H^{1+\tilde{\mu}}}\leq \norm{\Pi_{N_x} U(t_{n+1})}_{H^{1+\tilde{\mu}}}+1,\ \ \ \norm{V_{n+1}}_{H^{\tilde{\mu}}}\leq \norm{\Pi_{N_x} V(t_{n+1})}_{H^{\tilde{\mu}}}+1.
\end{aligned}
\end{equation}
For the first two terms of \eqref{nd-ff}, it follows  from  \cite{LI23} and \eqref{ass-ff}  that
 \begin{equation*}
\begin{aligned}
\norm{ \widetilde{R^U_1}(t_n)}_{H^1} \lesssim&h  \norm{\sin(h \sqrt{\mathcal{A}})(\Pi_{N_x}-I_{N_x}) f( U_n)}_{L^{2}}
\lesssim h  \norm{ (\Pi_{N_x}-I_{N_x}) f( U_n)}_{L^{2}}\\
 \lesssim&hN_x^{-2}  \norm{f'( U_n)\nabla^2   U_n +f''( U_n)\nabla  U_n \otimes \nabla  U_n}_{L^{2}}  \\
 \lesssim&hN_x^{-2} \big(\norm{U_n}_{H^{2}}+\norm{\nabla U_n}^2_{L^{4}} \big) \\
 \lesssim&hN_x^{-2} \big(\norm{U_n}_{H^{2}}+\norm{U_n}^2_{H^{1+\frac{d}{4}}} \big) \\
\lesssim &hN_x^{-1-\tilde{\mu}}  \norm{U_n}_{H^{1+\tilde{\mu}}}+hN_x^{-2-\tilde{\mu}+\frac{d}{4}}  \norm{U_n}^2_{H^{1+\tilde{\mu}}}
\\
\lesssim &hN_x^{-1-\tilde{\mu}} \big( \norm{U_n}_{H^{1+\tilde{\mu}}}+ \norm{U_n}^2_{H^{1+\tilde{\mu}}}\big),
\end{aligned}
\end{equation*}
and similarly
 \begin{equation*}
\begin{aligned}
 \norm{ \widetilde{R^U_2}(t_n)}_{H^1}   \lesssim& hN_x^{-1-\tilde{\mu}} \big(\norm{U_n}_{H^{1+\tilde{\mu}}}+\norm{V_n}_{H^{\tilde{\mu}}}+\norm{U_n}^2_{H^{1+\tilde{\mu}}}+\norm{V_n}^2_{H^{\tilde{\mu}}} \big).
\end{aligned}
\end{equation*}
For the third one in \eqref{nd-ff}, by noticing
$$\Psi_2(h \sqrt{\mathcal{A}})= \frac{1}{4h^2\mathcal{A}} \sinc^2( h \sqrt{\mathcal{A}}/2) -\frac{1}{2h^2\mathcal{A}}\sinc( h \sqrt{\mathcal{A}}),$$
we deduce that
 \begin{equation*}
\begin{aligned}
 &\norm{  \widetilde{R^U_3}(t_n)}_{H^1}\\ \lesssim&  h  \norm{    \frac{\Pi_{N_x}-I_N}{\sqrt{\mathcal{A}}^3} \Big(f''(U_n)V_n^2-  f''(U_n)(\nabla U_n)^2+\rho  f(U_n)-\rho  f'(U_n)U_n\Big)}_{H^1}\\
 \lesssim&  h N_x^{-2} \norm{\frac{1}{\sqrt{\mathcal{A}}^3} \Big(f''(U_n)V_n^2-  f''(U_n)(\nabla U_n)^2+\rho  f(U_n)-\rho  f'(U_n)U_n\Big)}_{H^3}\\
  \lesssim&  h N_x^{-2} \norm{ f''(U_n)V_n^2-  f''(U_n)(\nabla U_n)^2+\rho  f(U_n)-\rho  f'(U_n)U_n }_{L^2}\\
  \lesssim&  h N_x^{-2} \big(\norm{ V_n}^2_{L^2}+ \norm{U_n}^2_{H^2}+\norm{ f(U_n)}_{L^2}+\norm{U_n }_{L^2}\big)\\
    \lesssim&  h N_x^{-1-\tilde{\mu}} \big(\norm{ V_n}^2_{H^{\tilde{\mu}}}+ \norm{U_n}^2_{H^{1+\tilde{\mu}}}+\norm{U_n}_{H^{1+\tilde{\mu}}} \big).
\end{aligned}
\end{equation*}
Similarly, we get
 \begin{equation*}
\begin{aligned}
\norm{ \widetilde{R^V_1}(t_n)}_{L^2} \lesssim&h N_x^{-1-\tilde{\mu}},\ \
 \norm{ \widetilde{R^V_2}(t_n)}_{L^2}   \lesssim h N_x^{-1-\tilde{\mu}},\ \
 \norm{ \widetilde{R^V_3}(t_n)}_{L^2}   \lesssim h N_x^{-1-\tilde{\mu}}.
\end{aligned}
\end{equation*}
By using these estimates and taking the energy norm   on both sides of \eqref{d-ff1} and \eqref{d-ff2}, we obtain \eqref{error bound new} and \eqref{res-ff}
 with the same arguments given in the proof of   Theorem \ref{symplectic thm}.  The proof is complete.
\end{proof}

\begin{remark}
 Theorem \ref{fully-discrete thm}  states that for the solution $u\in H^{1+\tilde{\mu}}(\mathbb{T}^d)$  the
the
error is $$  \norm{\Pi_{N_x}U(t_{n+1})-U_{n+1}}_{H^{1}}+ \norm{\Pi_{N_x}V(t_{n+1})-V_{n+1}}_{L^{2}}  \leq C\big( h ^3+N_x^{-1-\tilde{\mu}}\big).$$ This is   much better than the regularity of the solution in time and space.
In general,  the projection error in space  should be
$$ \norm{\Pi_{N_x}U(t_{n+1})-U(t_{n+1})}_{H^{1}}+ \norm{\Pi_{N_x}V(t_{n+1})-V(t_{n+1})}_{L^{2}}  \leq C N_x^{ -\tilde{\mu}}.$$
Similar improvement also  occurs for the second-order low-regularity integrator  researched  in   \cite{LI23}.

\end{remark}

\section{Numerical test}\label{sec:4}

In this section, we show the numerical performance of the proposed third-order low-regularity integrator (LRI) by
comparing it with the well-known exponential integrators (EIs) \cite{Hochbruck10}.
 We choose two third-order EIs  from \cite{Hochbruck06} (denoted by EI1 and EI2) and an  exponential fitting   TI  from \cite{wang12} (denoted by EI3). The reference solution is obtained by using a very fine stepsize for the third-order low-regularity integrator.

 We shall
present the results with an one-dimensional example of (\ref{klein-gordon})
for simplicity: the nonlinear  Klein--Gordon equation \eqref{klein-gordon} with $d=1, \rho=0,\mathbb{T}=(-\pi,\pi), f(u)=\sin(u)$
(we  note that  the scheme proposed in  \cite{Zhao23} is not applicable to this nonlinear function).
  We choose the initial values $\psi_1(x)$ and $\psi_2(x)$ in the same way  as described in Section 5.1 of \cite{O18} and Section 4 of \cite{Zhao23}. More precisely, the  initial values $\psi_1(x)$ and $\psi_2(x)$
are in the space  $H^{\theta}(\mathbb{T})\times H^{\theta-1}(\mathbb{T})$.

We first  verify the spatial accuracy of the
our method where  $h=10^{-6}$ is chosen such
 that the temporal error is
negligible compared to the spatial error.  Figure  \ref{p0new} plots the error:
$err=\frac{\norm{u_n-u(t_n)}_{H^{1}}}{\norm{u(t_n)}_{H^{1}}}+\frac{\norm{v_n-v(t_n)}_{L^{2}}}{\norm{v(t_n)}_{L^{2}}}$ under different $N_x$ and $\theta$, where the reference solution is obtained again by the LRI with $N_x = 2^{16}$.
From the results, it is seen that for the large  $N_x$  the
error  brought by the  spatial discretization   can be neglected. Thence, we choose $N_x=2^{10}$ in the experiment.

    \begin{figure}[t!]
\centering
\includegraphics[width=5.2cm,height=4.4cm]{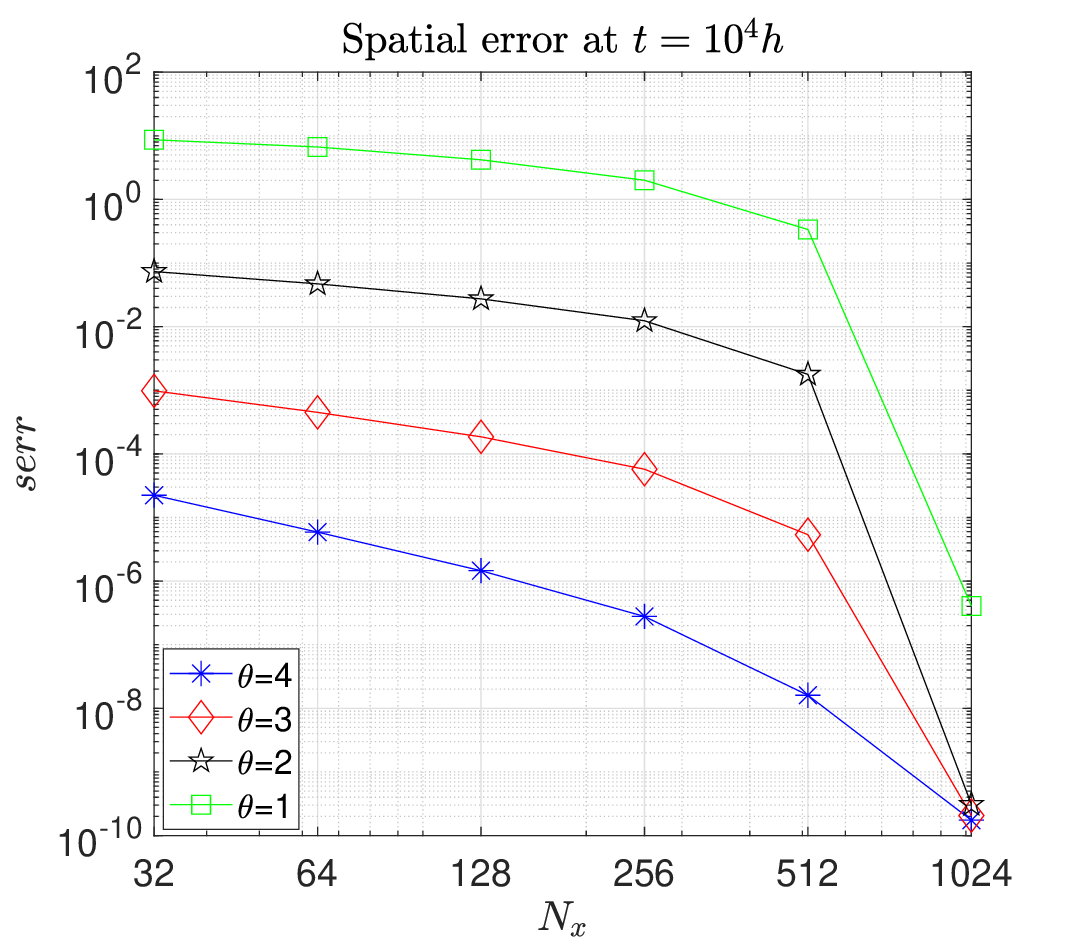}
\includegraphics[width=5.2cm,height=4.4cm]{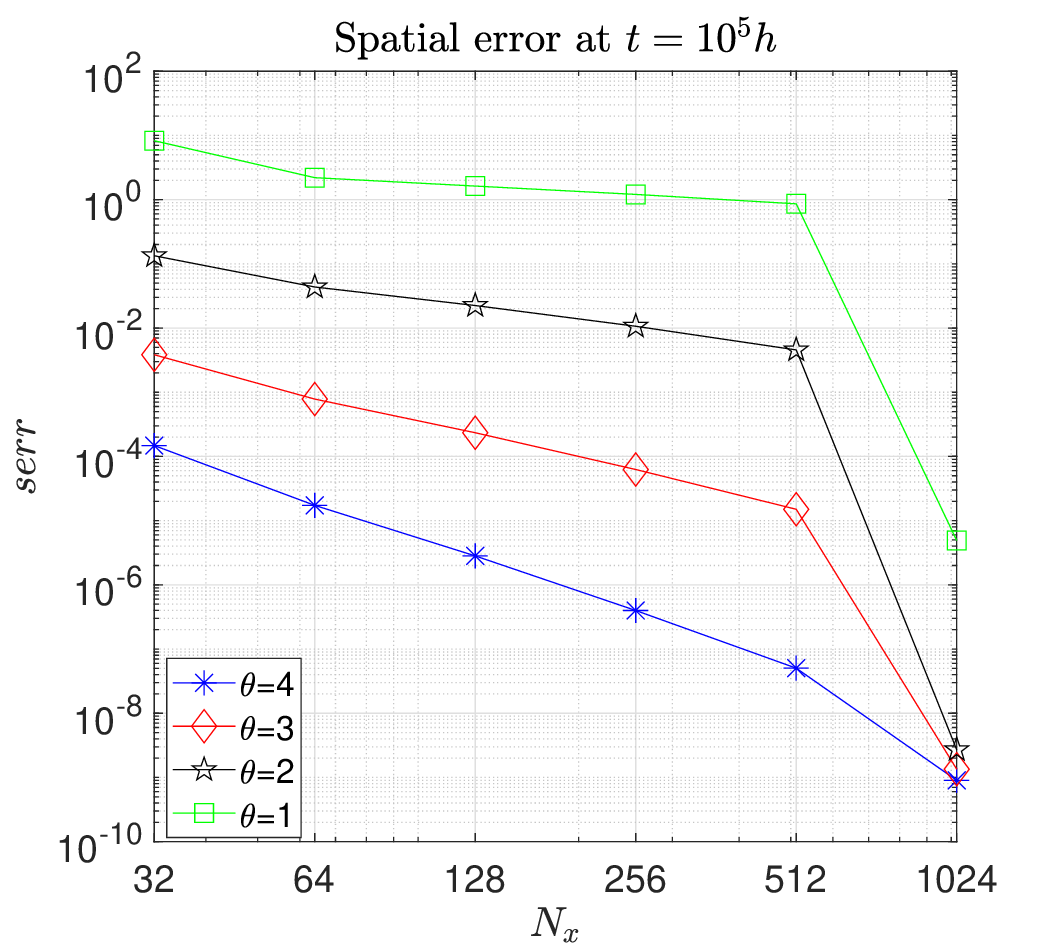}
\caption{Spatial error of LRI: the error $serr=\frac{\norm{u_n-u(t_n)}_{H^{1}}}{\norm{u(t_n)}_{H^{1}}}+\frac{\norm{v_n-v(t_n)}_{L^{2}}}{\norm{v(t_n)}_{L^{2}}}$ with  initial data $H^{\theta}(\mathbb{T}) \times H^{\theta-1}(\mathbb{T})$ against different $N_x$.} \label{p0new}
\end{figure}

In this test, we display  the global errors
 $err=\frac{\norm{u_n-u(t_n)}_{H^{1}}}{\norm{u(t_n)}_{H^{1}}}+\frac{\norm{v_n-v(t_n)}_{L^{2}}}{\norm{v(t_n)}_{L^{2}}}$ at $T=1$. The results for different initial value $(\psi_1,\psi_2)\in H^{\theta}(\mathbb{T})\times H^{\theta-1}(\mathbb{T})$  are given  in Figure \ref{p1}. From the results,  it can be observed that for the initial data in  $H^{\theta}(\mathbb{T})\times H^{\theta-1}(\mathbb{T})$  with $\theta=4, 3$, all the methods performance third order convergence. However, for the  initial data in a low-regularity space  $H^{\theta}(\mathbb{T})\times H^{\theta-1}(\mathbb{T})$  with $\theta=2$,  only the new integrator LRI proposed in this article shows the correct third-order convergence in $H^{1}(\mathbb{T})\times L^{2}(\mathbb{T})$. If the initial data is in a space $H^{\theta}(\mathbb{T})\times H^{\theta-1}(\mathbb{T})$  with $\theta=1.8,1.5, 1$ which is lower that the requirement $H^{2}(\mathbb{T})\times H^{1}(\mathbb{T})$ given in Theorem   \ref{symplectic thm},
 our method still has third-order accuracy and the expected bad behaviour does not show up for LRI in  Figure  \ref{p1}. To clarify this issue, we further display the result with more timesteps in  Figure \ref{fig1new}. The result  indicates that if the problem has lower regularity than $H^{2}(\mathbb{T})\times H^{1}(\mathbb{T})$, the new integrator LRI also does not show the correct order accuracy
 for very small time stepsize but it still performs much better than the exponential integrators.

 To investigate the practical gain from our proposed integrator, we study the efficiency of all the methods.
 Figure \ref{fig2}  displays the error at $T=5$ against the CPU time. It can be seen from the results that our proposed method LRI  can reach the same error level with remarkably less CPU time, and this clearly demonstrates the more efficiency of LRI  than the classical  exponential integrators.

  For the numerical experiments in two-dimensional  or three-dimensional space, the method has similar performance and we skip it for brevity.

    \begin{figure}[t!]
\centering
\includegraphics[width=3.9cm,height=4.4cm]{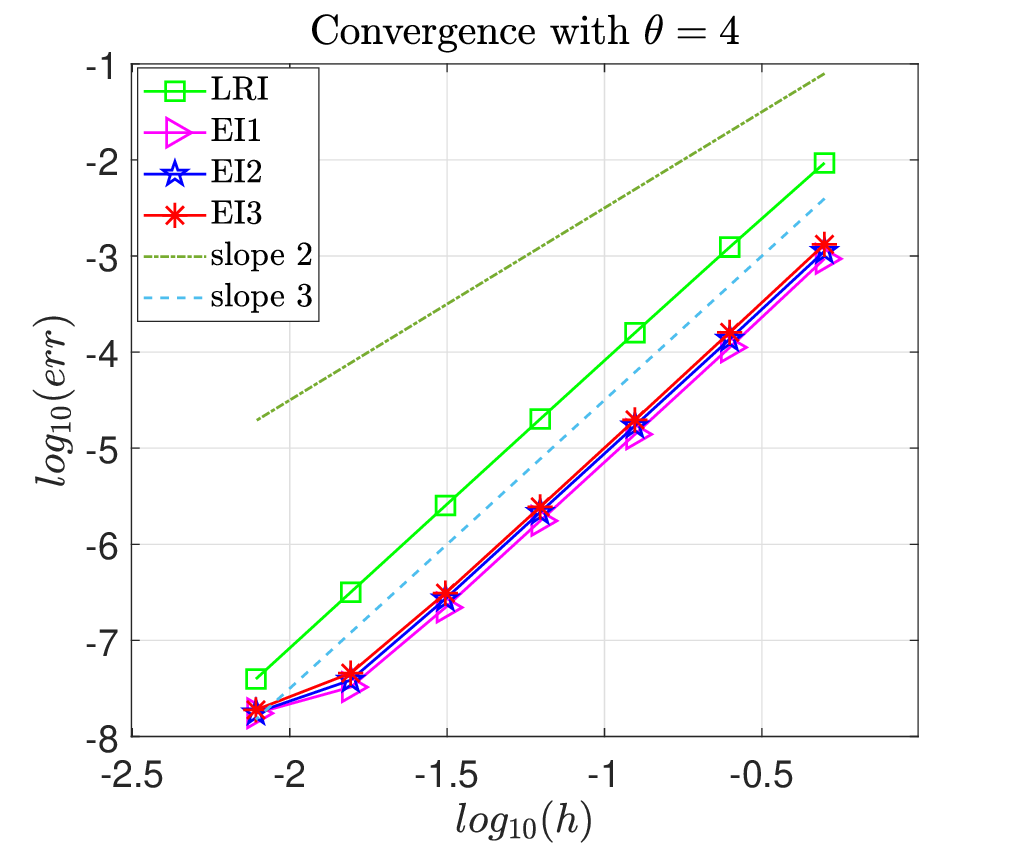}
\includegraphics[width=3.9cm,height=4.4cm]{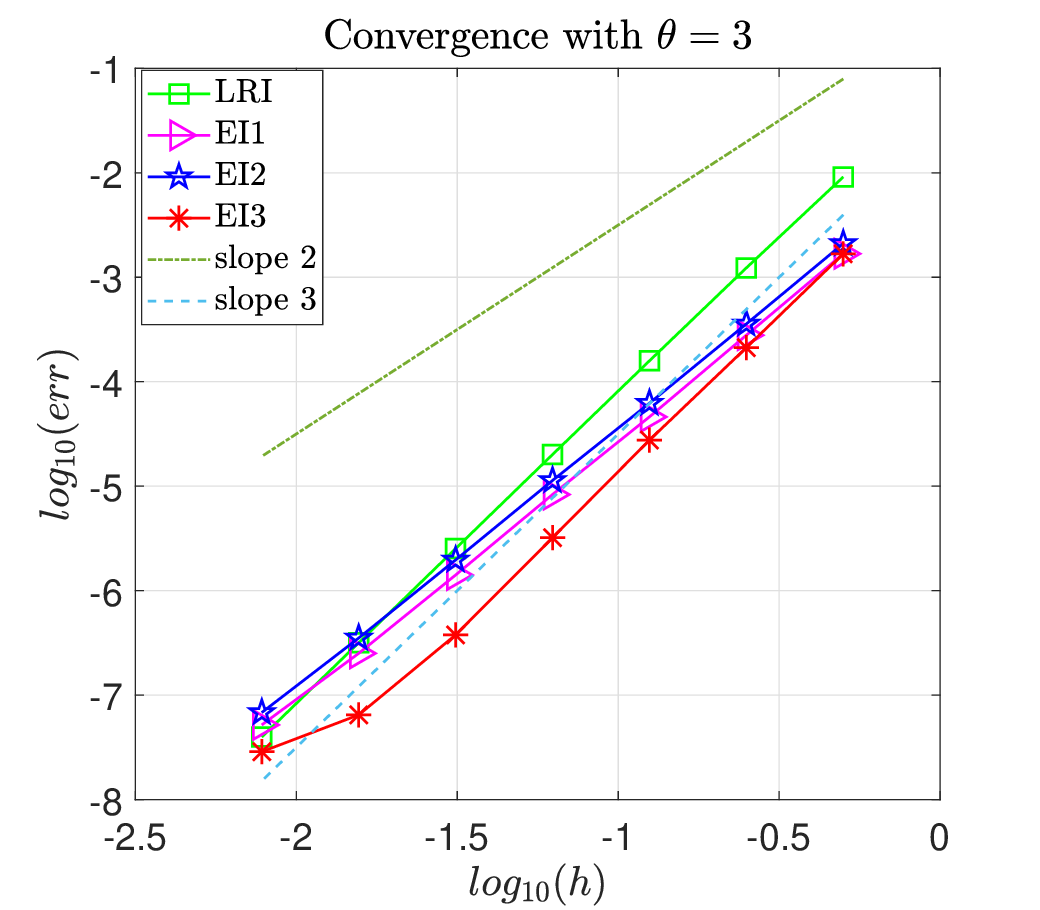}
\includegraphics[width=3.9cm,height=4.4cm]{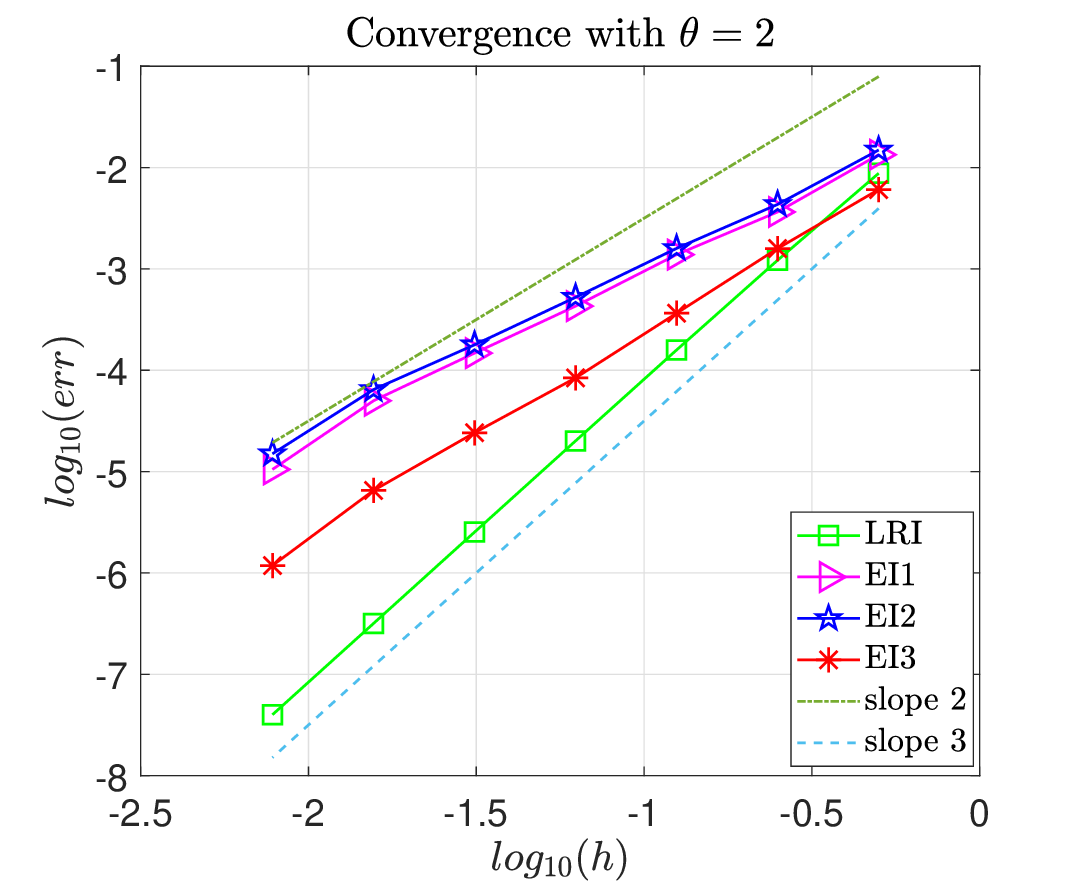}\\
\includegraphics[width=3.9cm,height=4.4cm]{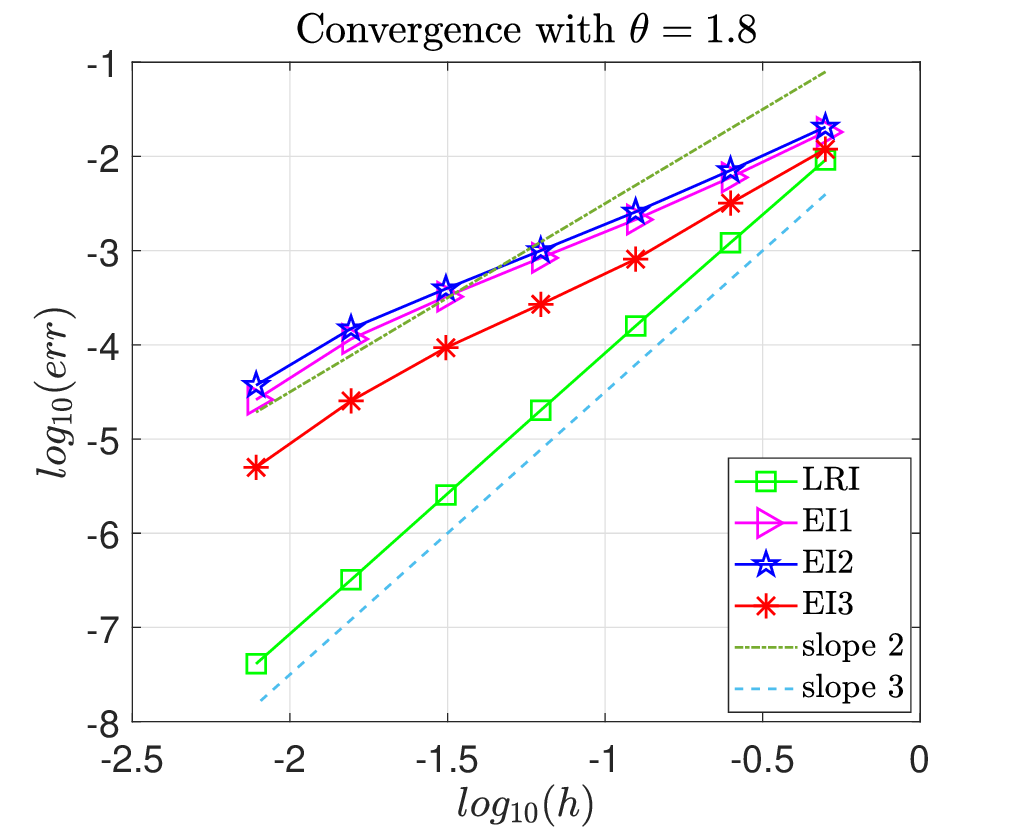}
\includegraphics[width=3.9cm,height=4.4cm]{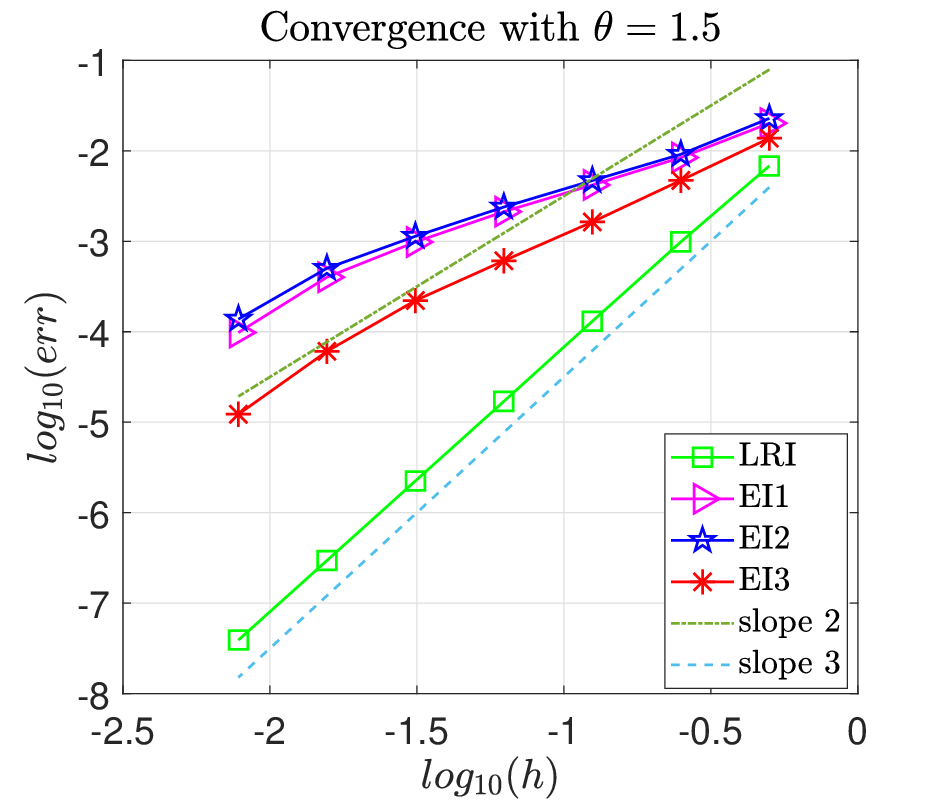}
\includegraphics[width=3.9cm,height=4.4cm]{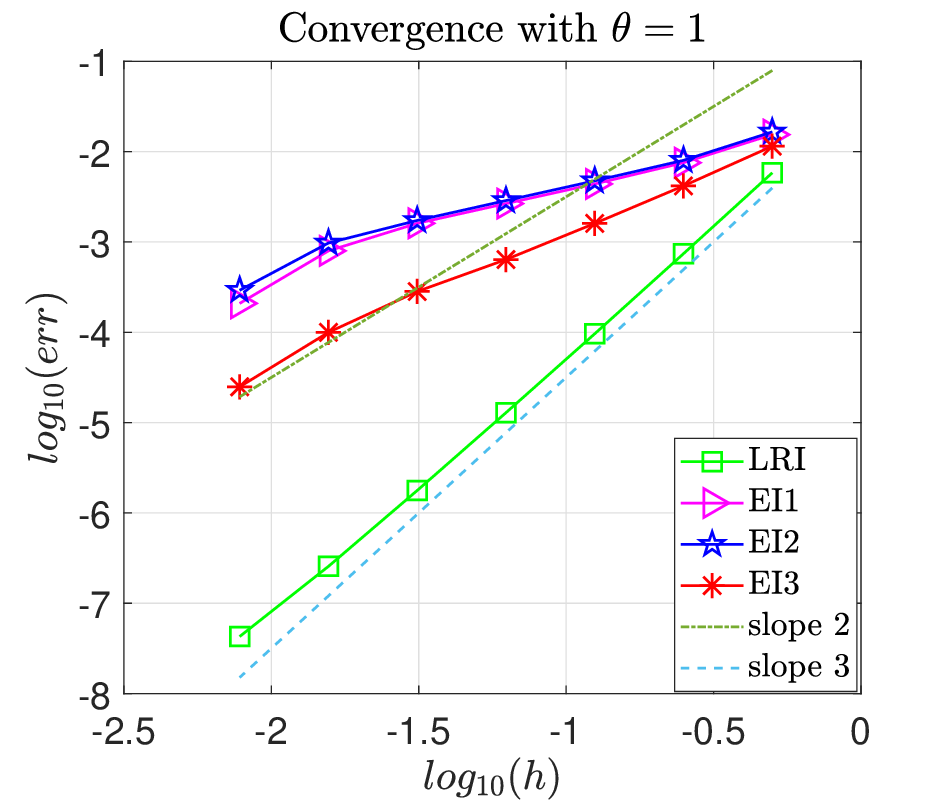}
\caption{Temporal error $err=\frac{\norm{u_n-u(t_n)}_{H^{1}}}{\norm{u(t_n)}_{H^{1}}}+\frac{\norm{v_n-v(t_n)}_{L^{2}}}{\norm{v(t_n)}_{L^{2}}}$ at $T=1$ with  initial data $H^{\theta}(\mathbb{T}) \times H^{\theta-1}(\mathbb{T})$ against $h $  with $h =1/2^k$, where $k=1,2,\ldots,7$.} \label{p1}
\end{figure}

    \begin{figure}[t!]
\centering
\includegraphics[width=3.9cm,height=4.4cm]{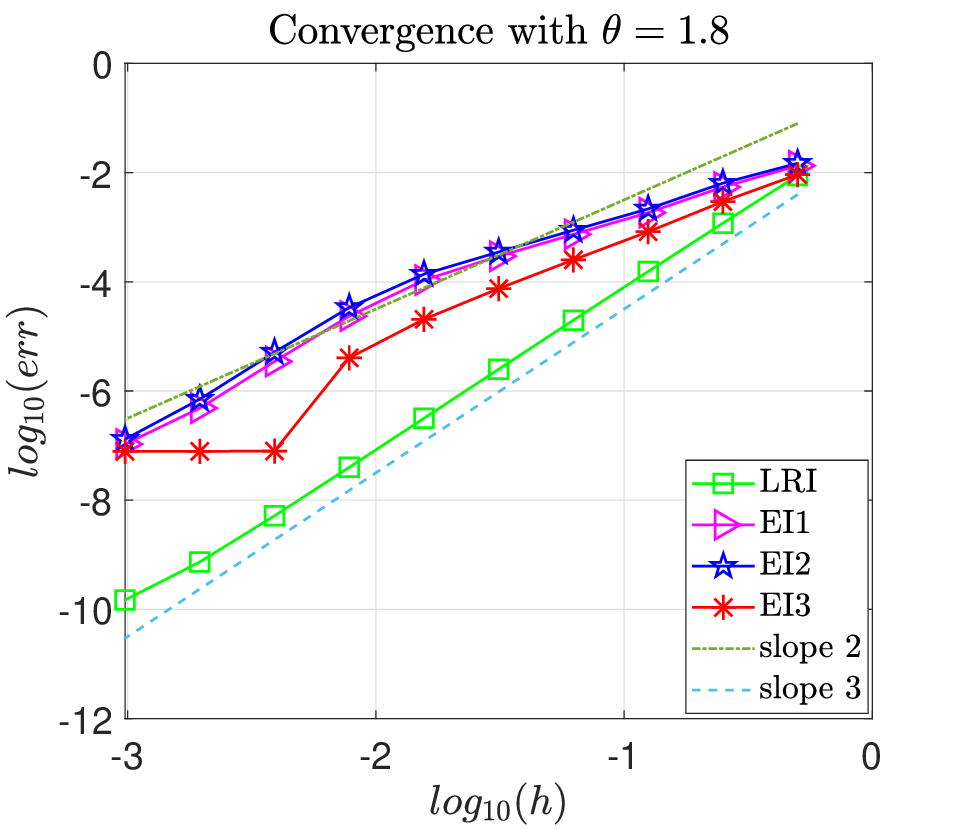}
\includegraphics[width=3.9cm,height=4.4cm]{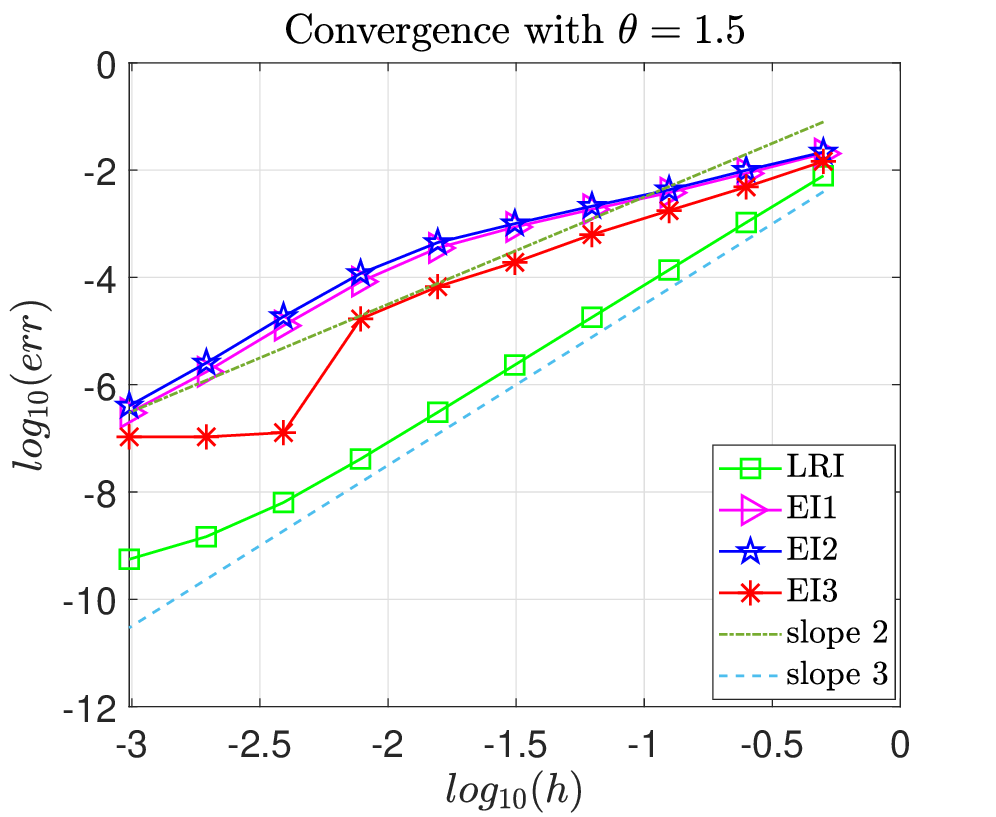}
\includegraphics[width=3.9cm,height=4.4cm]{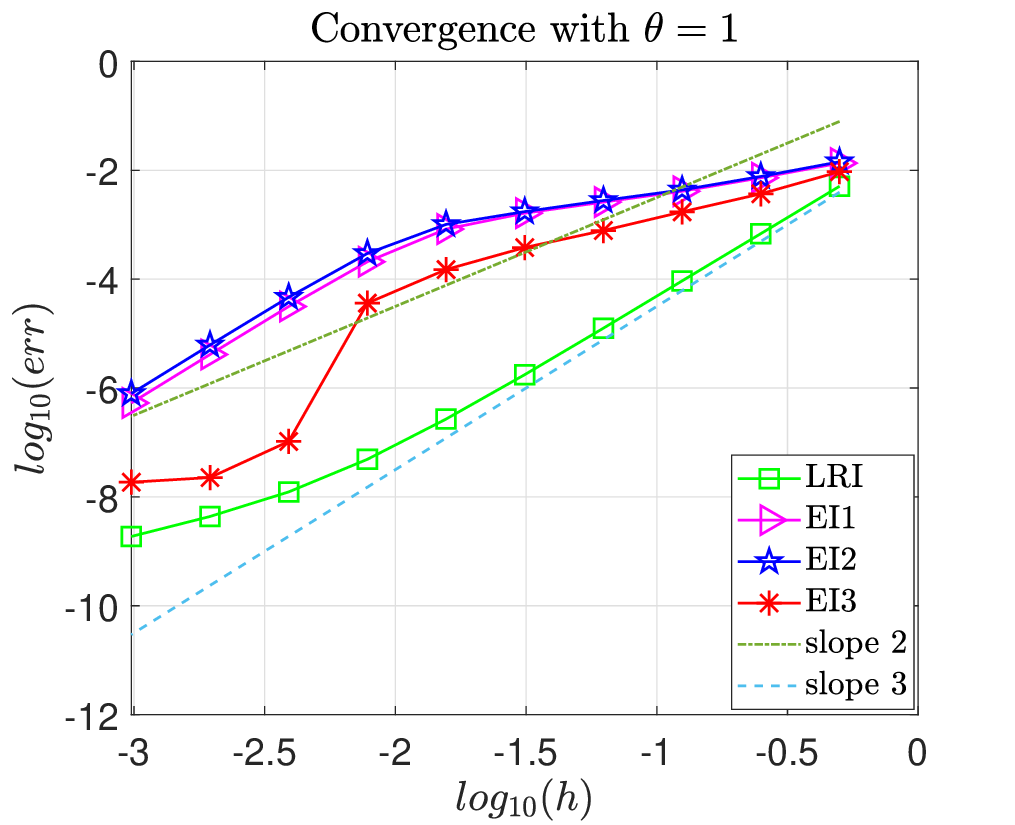}
\caption{Temporal error $err=\frac{\norm{u_n-u(t_n)}_{H^{1}}}{\norm{u(t_n)}_{H^{1}}}+\frac{\norm{v_n-v(t_n)}_{L^{2}}}{\norm{v(t_n)}_{L^{2}}}$ at $T=1$ with  initial data $H^{\theta}(\mathbb{T}) \times H^{\theta-1}(\mathbb{T})$ against $h $  with $h =1/2^k$, where $k=1,2,\ldots,10$.} \label{fig1new}
\end{figure}

    \begin{figure}[t!]
\centering
\includegraphics[width=3.9cm,height=4.4cm]{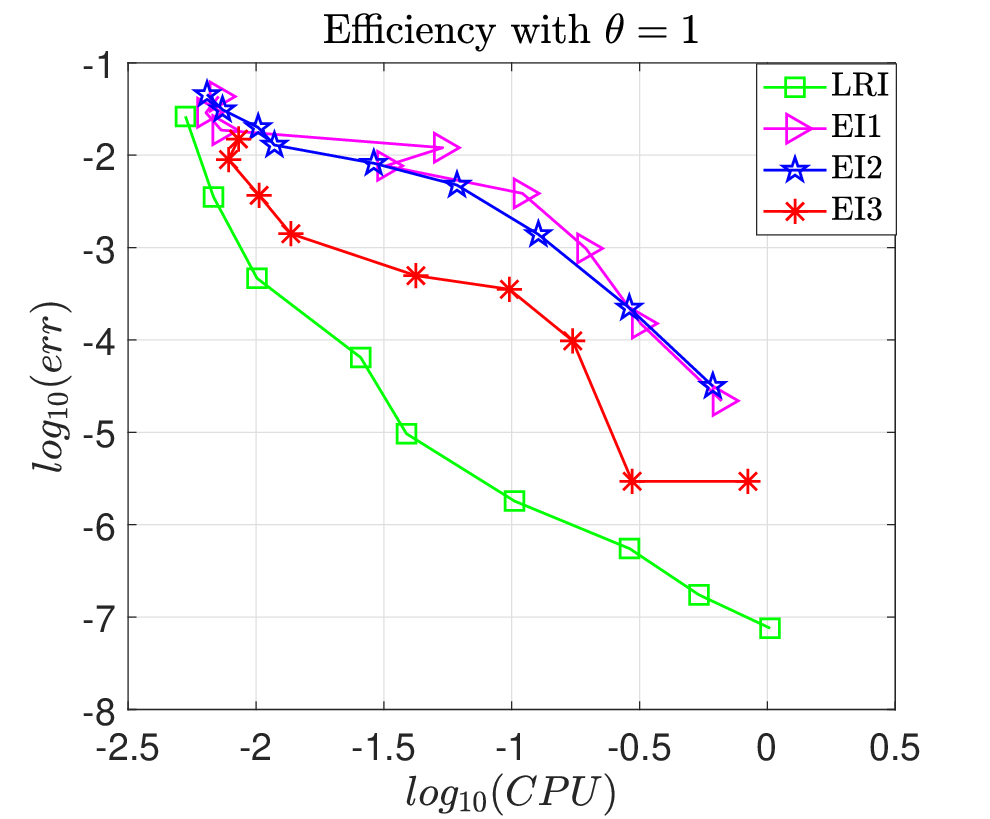}
\includegraphics[width=3.9cm,height=4.4cm]{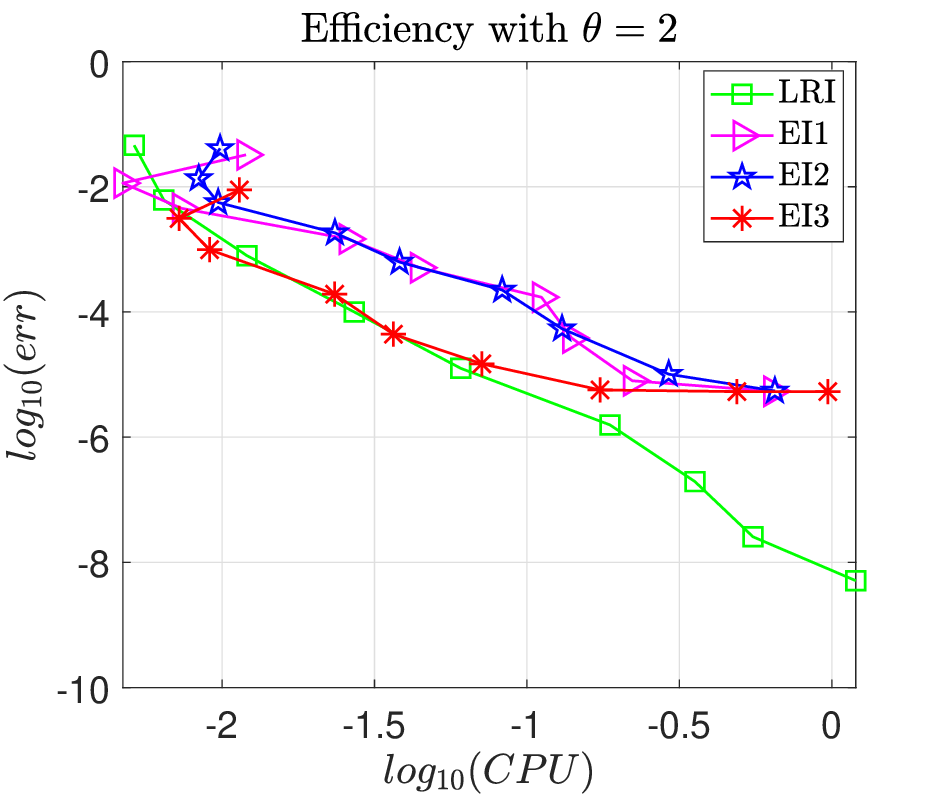}
\includegraphics[width=3.9cm,height=4.4cm]{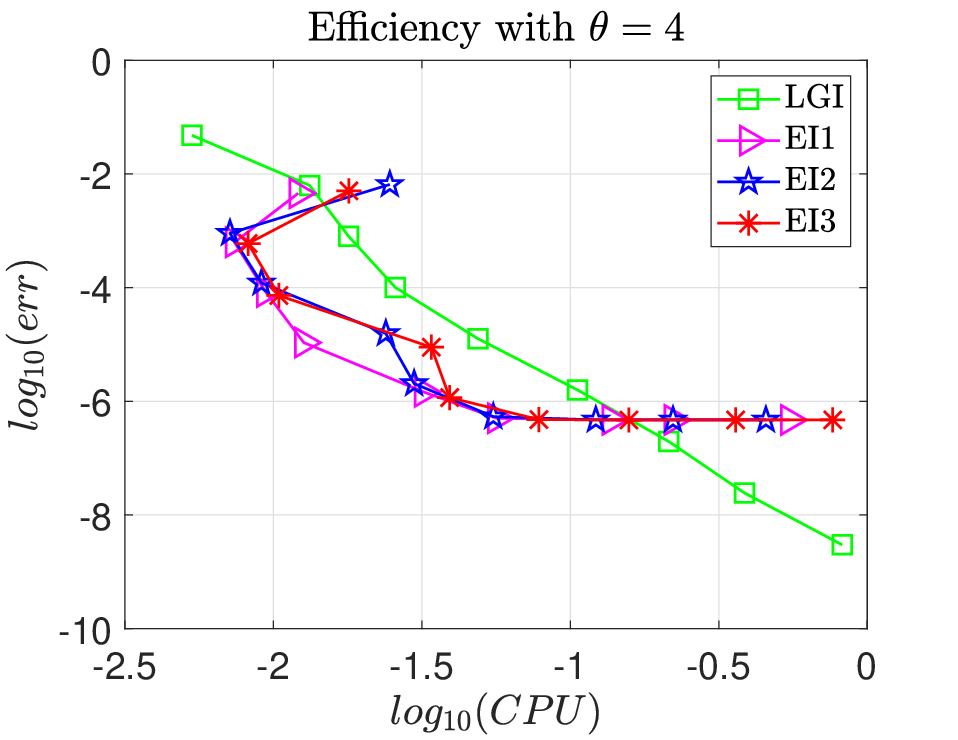}
\caption{Efficiency comparison: $err=\frac{\norm{u_n-u(t_n)}_{H^{1}}}{\norm{u(t_n)}_{H^{1}}}+\frac{\norm{v_n-v(t_n)}_{L^{2}}}{\norm{v(t_n)}_{L^{2}}}$ at $T=5$  against different CPU time produced by different  $h =1/2^k$, where $k=1,2,\ldots,7$.}\label{fig2}
\end{figure}

\section{Conclusions}\label{sec:5}
 In this paper,   a  low-regularity   trigonometric integrator was formulated and analysed  for
solving the nonlinear Klein-Gordon equation in the $d$-dimensional space with $d=1,2,3$.
Rigorous error estimates were given and the proposed integrator was shown to have
third-order time accuracy  in the energy space under a weak regularity condition.    A numerical experiment was carried out and the corresponding numerical results  were presented to  demonstrate  the behaviour of the new integrator  in comparison with   some well-known exponential integrators.

\section*{Acknowledgements}
This work was supported by NSFC (12371403, 12271426).


\end{document}